\theoremstyle{plain}
\newtheorem*{corollary}{Corollary}
\newtheorem{lemma}{Lemma}
\newtheorem{theorem}{Theorem}
\newtheorem*{conjecture}{Conjecture}
\theoremstyle{remark}
\newtheorem*{remark}{Remark}
\theoremstyle{definition}
\newtheorem{example}{Example}
\DeclareMathOperator{\Id}{Id}
\DeclareMathOperator{\id}{id}
\DeclareMathOperator{\tr}{tr}
\DeclareMathOperator{\End}{End}
\DeclareMathOperator{\Aut}{Aut}
\DeclareMathOperator{\im}{im}
\DeclareMathOperator{\sign}{sign}
\DeclareMathOperator{\GL}{GL}
\DeclareMathOperator{\UT}{UT}
\DeclareMathOperator{\Alt}{Alt}
\DeclareMathOperator{\Hom}{Hom}
\DeclareMathOperator{\PIexp}{PIexp}
\begin{document}

\title[Amitsur's conjecture for algebras with a generalized Hopf action]{Amitsur's conjecture for associative algebras
with a generalized Hopf action}

\author{A.\,S.~Gordienko}

\address{Memorial University of Newfoundland, St. John's, NL, Canada}
\email{asgordienko@mun.ca}
\keywords{Associative algebra, polynomial identity, group action, Hopf algebra, Hopf algebra action,
grading, codimension, cocharacter, symmetric group, Young diagram}

\begin{abstract}
We prove the analog of Amitsur's conjecture
on asymptotic behavior for codimensions of several generalizations of polynomial identities
for finite dimensional associative algebras over a field of characteristic $0$,
including $G$-identities
for any finite (not necessarily Abelian) group~$G$ and $H$-identities
 for a finite dimensional semisimple Hopf algebra~$H$.
  In addition, we prove that the Hopf
 PI-exponent of Sweedler's $4$-dimensional algebra with the action of its dual equals $4$.
\end{abstract}

\subjclass[2010]{Primary 16R10; Secondary 16R50, 16W20, 16W22, 16W50, 16T05, 20C30.}
\thanks{
Supported by post doctoral fellowship
from Atlantic Association for Research
in Mathematical Sciences (AARMS), Atlantic Algebra Centre (AAC),
Memorial University of Newfoundland (MUN), and
Natural Sciences and Engineering Research Council of Canada (NSERC)}

\medskip

\maketitle

\section{Introduction}

In the 1980's, a conjecture about the asymptotic behaviour
of codimensions of ordinary polynomial identities was made
by S.A.~Amitsur for algebras over a field of characteristic $0$.
 Amitsur's conjecture was proved in 1999 by
A.~Giambruno and M.V.~Zaicev~\cite[Theorem~6.5.2]{ZaiGia} for associative algebras, in 2002 by M.V.~Zaicev~\cite{ZaiLie}
 for finite dimensional Lie algebras, and in 2011 by A.~Giambruno,
 I.P.~Shestakov, M.V. Zaicev~\cite{GiaSheZai} for finite dimensional Jordan and alternative
 algebras. The author proved its analog
 for polynomial identities of finite dimensional representations of Lie
 algebras~\cite{ASGordienko}.
 
  Alongside with ordinary polynomial
identities of algebras, graded polynomial identities, $G$- and
$H$-identities are
important too~\cite{BahtGiaZai, BahtZaiGraded, BahtZaiGradedExp, BahtZaiSehgal, BahturinLinchenko,
 BereleHopf, Linchenko}.
 Usually, to find such identities is easier
  than to find the ordinary ones. Furthermore, the graded polynomial identities, $G$- and
$H$-identities completely determine the ordinary polynomial identities. 
Therefore the question arises whether the conjecture
holds for graded codimensions, $G$- and $H$-codimensions.
The analog of Amitsur's conjecture
for codimensions of graded identities was proved in 2010--2011 by
E.~Aljadeff,  A.~Giambruno, and D.~La~Mattina~\cite{AljaGia, AljaGiaLa, GiaLa}
  for all associative PI-algebras graded by a finite group.
   As a consequence, they proved the analog  of the conjecture for $G$-codimensions
   for any associative PI-algebra with an action of a finite Abelian group $G$ by automorphisms.
    In 2011 the author~\cite{ASGordienko2} proved 
  the analog of Amitsur's conjecture for graded polynomial identities
  of finite dimensional
Lie algebras graded by a finite Abelian group
and  for $G$-identities
of finite dimensional
Lie algebras with an action of any finite group (not necessarily Abelian).
The case when $G=\mathbb Z_2$
acts on a finite dimensional associative algebra by automorphisms and anti-automorphisms
(i.e. polynomial identities with involution)
 was considered by A.~Giambruno and
 M.V.~Zaicev~\cite[Theorem~10.8.4]{ZaiGia}
 in 1999.

This article is concerned with the analog of Amitsur's conjecture for 
 $G$-codimensions 
(Subsection~\ref{SubsectionGAction}),
where an arbitrary finite group $G$ acts by automorphisms and anti-automorphisms,
 and for $H$-codimensions of $H$-module associative algebras where $H$ is a Hopf algebra
(Subsection~\ref{SubsectionHopf}).
As shown in Subsection~\ref{SubsectionDuality}, the case of graded codimensions is a particular case
of $H$-codimensions. Hence we obtain a new proof of Amitsur's conjecture for graded polynomial identities
of finite dimensional associative algebras graded by a finite group. The case of $H$-codimensions does not always include the case of $G$-codimensions,
namely, when $G$ acts on an algebra not only by automorphisms, but by anti-automorphisms too.
However, in Section~\ref{SectionGeneralizedHopf} we consider the generalized Hopf action that
embraces all three situations.

 In Subsection~\ref{SubsectionGenHopfPIexp} we provide an explicit formula
for the (generalized) Hopf PI-exponent that is a natural generalization of the formula
for the ordinary PI-exponent~\cite[Section 6.2]{ZaiGia}. Of course, this formula can be used for
graded codimensions and $G$-codimensions as well. The formula has immediate applications.
In particular, in Section~\ref{SectionExamples} we apply it 
to calculate  the (generalized) Hopf PI-exponent for several important classes of algebras.
In Subsection~\ref{SubsectionSweedler} we study the asymptotic behaviour of $H$-codimensions
of $4$-dimensional Sweedler's algebra with the action of its dual. In this case we cannot use
theorems from Subsection~\ref{SubsectionHopf} since the Jacobson radical of Sweedler's algebra is not stable
under this action.
However it is possible to apply the techniques developed in Sections~\ref{SectionAlt}--\ref{SectionLower} and prove directly that the Hopf PI-exponent of Sweedler's algebra equals $4$.

The results obtained provide a useful tool to study polynomial identities in associative algebras and hence to study associative algebras themselves. They allow for future applications.

All the codimensions discussed in the article do not change upon an extension of the base field.
The proof is analogous to the case of codimensions of ordinary polynomial identities~\cite[Theorem~4.1.9]{ZaiGia}. Thus without loss of generality we may assume the field to be algebraically closed.
In the main results we require the field to be of characteristic $0$.

\subsection{Polynomial $G$-identities and their codimensions}\label{SubsectionGAction}

We use the exponential notation for the action of a
 group.
Let $A$ be an associative algebra over a field $F$.
Recall that $\psi \in \GL(A)$ is an {\itshape
automorphism} of $A$ if $(ab)^\psi = a^\psi b^\psi$
for all $a,b \in A$
and {\itshape
anti-automorphism} of $A$ if $(ab)^\psi = b^\psi a^\psi$
for all $a, b \in A$. Automorphisms of $A$
form the group denoted by $\Aut(A)$.
Automorphisms and anti-automorphisms of $A$
form the group denoted by $\Aut^{*}(A)$.
Note that $\Aut(A)$ is a normal subgroup of $\Aut^{*}(A)$
of index ${}\leqslant 2$.

Let $G$ be a group
with a fixed (normal) subgroup $G_0$ of index ${}\leqslant 2$.
  We say that an associative algebra $A$ is an \textit{algebra with $G$-action}
  or a \textit{$G$-algebra}
   if $A$ is endowed with a homomorphism $\varphi \colon G \to
  \Aut^{*}(A)$ such that $\varphi^{-1}(\Aut(A))=G_0$.
 Denote by $F\langle X | G \rangle$
the free associative algebra over $F$ with free formal generators $x^g_j$, $j\in\mathbb N$,
 $g \in G$. Here $X := \lbrace x_1, x_2, x_3, \ldots \rbrace$, $x_j := x_j^1$. Define
 $$(x_{i_1}^{g_1} x_{i_2}^{g_2}\ldots x_{i_{n-1}}^{g_{n-1}}
  x_{i_n}^{g_n})^h :=
x_{i_1}^{hg_1} x_{i_2}^{hg_2}\ldots x_{i_{n-1}}^{hg_{n-1}} x_{i_n}^{hg_n}
\text { for } h \in G_0, $$
 $$(x_{i_1}^{g_1} x_{i_2}^{g_2}\ldots x_{i_{n-1}}^{g_{n-1}}
  x_{i_n}^{g_n})^h :=
x_{i_n}^{hg_n} x_{i_{n-1}}^{hg_{n-1}}  \ldots x_{i_2}^{hg_2}x_{i_1}^{hg_1}
\text { for } h \in G\backslash G_0.$$
 Then $F\langle X | G \rangle$ becomes the free $G$-algebra with
 free generators $x_j$, $j \in \mathbb N$. We call its elements
 $G$-polynomials.
 Let $A$ be an associative  $G$-algebra over $F$. A $G$-polynomial
 $f(x_1, \ldots, x_n)\in F\langle X | G \rangle$
 is a \textit{$G$-identity} of $A$ if $f(a_1, \ldots, a_n)=0$
for all $a_i \in A$. In this case we write
$f \equiv 0$.
The set $\Id^{G}(A)$ of all $G$-identities
of $A$ is an ideal in $F\langle X | G \rangle$ invariant under $G$-action.
If $G=\lbrace e \rbrace$ is the trivial group, then we have the case of ordinary
polynomial identities.

\begin{example}\label{ExampleIdG} Let $M_2(F)$ be the algebra
of $2\times 2$ matrices. Consider $\psi \in \Aut(M_2(F))$
defined by the formula $$\left(
\begin{array}{cc}
a & b \\
c & d
\end{array}
 \right)^\psi := \left(
\begin{array}{rr}
a & -b \\
-c & d
\end{array}
 \right).$$
Then $[x+x^{\psi},y+y^{\psi}]\in \Id^{G}(M_2(F))$
where
$G=\langle \psi \rangle \cong \mathbb Z_2$.
Here $[x,y]:=xy-yx$.
\end{example}

\begin{example}\label{ExampleIdG2} Consider $\psi \in \Aut^*(M_2(F))$
defined by the formula $$\left(
\begin{array}{cc}
a & b \\
c & d
\end{array}
 \right)^\psi := \left(
\begin{array}{rr}
a & c \\
b & d
\end{array}
 \right),$$
 i.e. $\psi$ is the transposition.
Then $[x-x^{\psi},y-y^{\psi}]\in \Id^{G}(M_2(F))$
where
$G=\langle \psi \rangle \cong \mathbb Z_2$.
\end{example}

Denote by $P^G_n$ the space of all multilinear $G$-polynomials
in $x_1, \ldots, x_n$, $n\in\mathbb N$, i.e.
$$P^{G}_n = \langle x^{g_1}_{\sigma(1)}
x^{g_2}_{\sigma(2)}\ldots x^{g_n}_{\sigma(n)}
\mid g_i \in G, \sigma\in S_n \rangle_F \subset F \langle X | G \rangle$$
where $S_n$ is the $n$th symmetric group.
Then the number $c^G_n(A):=\dim\left(\frac{P^G_n}{P^G_n \cap \Id^G(A)}\right)$
is called the $n$th \textit{codimension of polynomial $G$-identities}
or the $n$th \textit{$G$-codimension} of $A$.

 Let $c_n(A)$ be the $n$th ordinary codimension, which equals
 the $n$th $G$-codimension for $G=\lbrace e \rbrace$.
 Then by~\cite[Lemmas 10.1.2 and 10.1.3]{ZaiGia} we have $c_n(A) \leqslant c^{G}_n(A)
  \leqslant |G|^n c_n(A)$ for all  $n \in \mathbb N$.

The analog of Amitsur's conjecture for $G$-codimensions can be formulated
as follows.

\begin{conjecture} There exists
 $\PIexp^G(A):=\lim\limits_{n\to\infty}
 \sqrt[n]{c^G_n(A)} \in \mathbb Z_+$.
\end{conjecture}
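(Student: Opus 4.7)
My plan is to follow the Giambruno--Zaicev strategy for the ordinary Amitsur conjecture, adapted to the $G$-action setting. The first reduction is to embed $G$-actions (including anti-automorphisms, which cannot be captured by an honest Hopf action) into the generalized Hopf action framework of Section~\ref{SectionGeneralizedHopf}; once this is done, the $G$-case becomes a special instance of a uniform theorem about generalized Hopf module algebras. After extending scalars so that $F$ is algebraically closed, I would establish a $G$-invariant Wedderburn--Malcev decomposition $A = B \oplus J(A)$, where $J(A)$ is the Jacobson radical (automatically $G$-stable since it is preserved by every automorphism and anti-automorphism) and $B = B_1 \oplus \cdots \oplus B_q$ with each $B_i$ a $G$-simple subalgebra. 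The candidate exponent is the integer
\[
 d := \max \dim(B_{i_1} \oplus \cdots \oplus B_{i_r}),
\]
where the maximum is taken over all tuples $(i_1, \ldots, i_r)$ for which there exist $j_1, \ldots, j_{r-1} \in J(A)$ with $B_{i_1} j_1 B_{i_2} j_2 \cdots j_{r-1} B_{i_r} \neq 0$.

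For the upper bound $c^G_n(A) \leqslant C_1 n^{s_1} d^n$, I would analyse the $G$-cocharacter of $A$ as a sum of $S_n$-irreducibles $\chi^\lambda$. A $G$-equivariant version of the Amitsur--Regev argument, combined with the nilpotency of $J(A)$, should force the multiplicity of $\chi^\lambda$ to vanish whenever $\lambda$ has more than $d$ rows that are not dramatically short, and the remaining partitions can be controlled via the hook-length formula to yield the desired bound.

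For the lower bound $c^G_n(A) \geqslant C_2 n^{s_2} d^n$, I would construct explicit multialternating $G$-polynomials. Choose a witness tuple $(B_{i_1}, \ldots, B_{i_r})$ realising $d$, and build a multilinear $f_n$ of degree $n$ that alternates on $r$ disjoint sets of variables of sizes $\dim B_{i_k}$ (with extra variables filling the degree), inserting ``spacer'' slots for the radical elements $j_k$. Evaluate on bases of the $B_{i_k}$ together with the chosen $j_k$; non-vanishing should follow from nondegeneracy of a suitable $G$-invariant trace form on each $G$-simple component. A Regev-type asymptotic estimate of $n! / \prod_k (\dim B_{i_k})!$ then gives the matching lower bound $\gtrsim n^{-s_2} d^n$, and combined with the upper bound proves both existence of the limit and $\PIexp^G(A) = d \in \mathbb{Z}_+$.

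The hardest step I anticipate is the lower bound. Because $B_i$ is $G$-simple rather than simple, and the action may mix automorphisms with anti-automorphisms, the internal structure is richer than that of a matrix block, and the standard Giambruno--Zaicev ``staircase'' construction must be replaced by a $G$-equivariant variant compatible with the generalized Hopf action. A key technical prerequisite will be a Wedderburn--Malcev lifting theorem for generalized Hopf module algebras, guaranteeing that a complement of $J(A)$ can be chosen $G$-invariant; verifying the precise non-vanishing of the evaluation --- which ultimately rests on the existence of a nondegenerate $G$-invariant bilinear form on each $G$-simple component --- will absorb most of the remaining work.
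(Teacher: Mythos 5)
Your overall architecture coincides with the paper's: reduction to the generalized $H$-action framework with $H=FG$ (Example~\ref{ExampleIdFG*}), passage to an algebraically closed field, the $G$-invariant Wedderburn--Malcev decomposition (Taft's theorem together with a Maschke-averaging argument as in Lemma~\ref{LemmaWedderburnG}), the same formula for the candidate exponent $d$, a cocharacter upper bound, and a multialternating lower bound. Two remarks on the upper bound: besides showing that $m(A,H,\lambda)=0$ for diagrams with more than $d$ long rows (Lemma~\ref{LemmaUpper}), one also needs a polynomial bound on the total multiplicity $\sum_{\lambda\vdash n} m(A,H,\lambda)$, which the paper imports from Berele's results on cocharacters of algebras with Hopf actions. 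Also, the nondegenerate form on a $G$-simple component need not be $G$-invariant: the paper uses the trace form $\tr(\varphi(\cdot)\varphi(\cdot))$ of the left regular representation, which is nondegenerate simply because the component is semisimple (Lemma~\ref{LemmaForm}), so the obstacle you anticipate does not arise.

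The genuine gap is in your lower-bound construction. A multilinear polynomial of degree $n$ that alternates on only $r$ disjoint sets of sizes $\dim B_{i_1},\ldots,\dim B_{i_r}$ --- a bounded number of sets of bounded total size $d$ --- cannot force $c^G_n(A)\geqslant C n^{s}d^n$: it constrains only finitely many boxes of the Young diagrams that occur, and the quantity $n!/\prod_k(\dim B_{i_k})!$ is not the dimension of any relevant $S_n$-module. What is actually needed (and what the paper does in Theorem~\ref{TheoremAlternateFinal} and Lemma~\ref{LemmaAlt}) is a non-identity alternating on $2k$ disjoint sets, each of full size $d$, with $k\sim n/(2d)$ growing linearly in $n$; this forces a partition $\lambda$ with $m(A,H,\lambda)\ne 0$ whose diagram contains a $d\times(2k-p)$ rectangle, and the hook formula then yields $\dim M(\lambda)\geqslant C_5 k^{r_5} d^{2kd}$, which is of order $d^n$. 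Producing arbitrarily many alternating layers on a single $G$-simple component is the real work: the paper first builds, via the density theorem applied to the operators $\varphi(B_0)$, $\psi(B_0)$, $\rho(H)$ together with Regev's central polynomial, one polynomial with two alternating layers that reproduces an extra variable $z$, and then iterates, using the nondegeneracy of $\tr(\varphi(\cdot)\varphi(\cdot))$ to show that each new pair of layers multiplies the value by the nonzero determinant $\det\bigl(\tr(\varphi(a_i)\varphi(a_j))\bigr)$. Your proposal contains no mechanism for this iteration, and without it the lower bound does not follow.
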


\begin{theorem}\label{TheoremMainG}
Let $A$ be a finite dimensional non-nilpotent associative algebra
over a field $F$ of characteristic $0$. Suppose a finite not necessarily
Abelian group $G$ acts on $A$ by automorphisms and anti-automorphisms.
 Then there exist constants $C_1, C_2 > 0$, $r_1, r_2 \in \mathbb R$, $d \in \mathbb N$ such that $C_1 n^{r_1} d^n \leqslant c^{G}_n(A) \leqslant C_2 n^{r_2} d^n$ for all $n \in \mathbb N$.
\end{theorem}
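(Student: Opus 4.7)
The plan is to recast the $G$-action in the formalism of generalized Hopf actions developed in Section~\ref{SectionGeneralizedHopf} and then apply the general upper and lower bounds of Subsection~\ref{SubsectionGenHopfPIexp} to conclude. As already noted, codimensions are insensitive to field extension, so I may assume $F$ algebraically closed.

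The first step is to show that the Jacobson radical $J = J(A)$ is $G$-stable. For $\varphi(g) \in \Aut(A)$ this is classical; for $\varphi(g) \in \Aut^{*}(A) \setminus \Aut(A)$, the image $J^{\varphi(g)}$ is again a two-sided ideal and still nilpotent, hence contained in $J$, and applying the inverse gives equality. Because $\mathrm{char}\,F = 0$, the group algebra $FG_0$ is semisimple, and a standard Maschke--averaging argument produces a Wedderburn--Malcev decomposition $A = B_1 \oplus \cdots \oplus B_q \oplus J$ in which the semisimple complement and each $G_0$-simple summand $B_i$ can be chosen $G_0$-invariant, while the coset $G \setminus G_0$ permutes the $B_i$ and acts as anti-automorphisms on any that it fixes.

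Encoding this data in the generalized Hopf framework will reduce the theorem to the general exponent bound, whose exponent $d$ is
$$d \;=\; \max \dim\bigl(B_{i_1} \oplus B_{i_2} \oplus \cdots \oplus B_{i_r}\bigr),$$
the maximum being taken over tuples of pairwise distinct $G$-simple components with $B_{i_1} J B_{i_2} J \cdots J B_{i_r} \neq 0$; non-nilpotence of $A$ guarantees $d \geqslant 1$. For the upper bound the key step is to show that only partitions $\lambda \vdash n$ with at most $d$ parts contribute to the $S_n$-cocharacter of $P_n^G / (P_n^G \cap \Id^G(A))$: an alternation over $d+1$ disjoint sets of variables forces a product through $d+1$ distinct simple blocks, which must vanish. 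The hook formula then yields $c_n^G(A) \leqslant C_2 n^{r_2} d^n$. For the lower bound I would adapt the Regev--Giambruno--Zaicev construction to the $G$-setting, producing an explicit multilinear alternating $G$-polynomial that evaluates non-trivially on generic elements of the $B_{i_j}$ and $J$ realizing a maximal nonzero product of the above type; a Young diagram close to a $d \times \lfloor n/d \rfloor$ rectangle then gives the matching lower bound $C_1 n^{r_1} d^n$.

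The main obstacle is the correct treatment of anti-automorphisms. An element of $G \setminus G_0$ reverses the order of a monomial, so ordinary Hopf module arguments (which rely on the comultiplication distributing over multiplication without reversal) do not apply verbatim, and one cannot naively read off the $G$-simple structure from the $FG$-module structure. The role of the generalized Hopf formalism of Section~\ref{SectionGeneralizedHopf} is precisely to absorb this reversal by enlarging $FG$ with extra data distinguishing $G_0$ from $G \setminus G_0$; once the reduction to that setting is made, the representation-theoretic machinery for both bounds runs in parallel with the purely Hopf-module case of Subsection~\ref{SubsectionHopf}.
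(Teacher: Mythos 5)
Your proposal follows essentially the same route as the paper: reduce to an algebraically closed base field, observe that $J(A)$ is $G$-stable, invoke a $G$-invariant Wedderburn---Malcev decomposition together with a Maschke-type splitting into $G$-simple ideals, encode the action (including the order-reversal caused by anti-automorphisms) as a generalized $H$-action with $H=FG$, and apply the general Theorem~\ref{TheoremMain} with the exponent $d$ given by formula~(\ref{EqdofA}). The reduction is correct; the paper's Lemma~\ref{LemmaWedderburnG} obtains the $G$-simple ideals directly by averaging over all of $G$ (anti-automorphisms included), whereas you pass through $G_0$-simple summands and their $G$-orbits, which amounts to the same thing. One caveat in your sketch of the upper bound: it is not true that only partitions with at most $d$ parts contribute to the cocharacter; the correct statement (Lemma~\ref{LemmaUpper}) is that a contributing $\lambda$ has at most $d$ \emph{long} rows, i.e.\ $\sum_{i>d}\lambda_i<p$ where $J^p=0$, since columns of length exceeding $d$ merely force substitutions from $J$ rather than immediate vanishing; one also needs the polynomial bound on the multiplicities $m(A,H,\lambda)$ from \cite{BereleHopf} to conclude $c_n^G(A)\leqslant C_2n^{r_2}d^n$.
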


\begin{corollary}
The above analog of Amitsur's conjecture holds
 for such codimensions.
\end{corollary}

\begin{remark}
If $A$ is nilpotent, i.e. $x_1 \ldots x_p\equiv 0$ for some $p\in\mathbb N$, then
$P^{G}_n \subseteq \Id^{G}(A)$ and $c^G_n(A)=0$ for all $n \geqslant p$.
\end{remark}

Theorem~\ref{TheoremMainG} is obtained as a consequence of Theorem~\ref{TheoremMain}
in Subsection~\ref{SubsectionProofofTheoremMainG}.

\subsection{$H$-identities and their codimensions}\label{SubsectionHopf}

Analogously, one can consider polynomial $H$-identities
of $H$-module algebras where $H$ is a Hopf algebra. An algebra $A$
over a field $F$
is an \textit{$H$-module algebra}
or an \textit{algebra with an $H$-action},
if $A$ is endowed with a homomorphism $H \to \End_F(A)$ such that
$h(ab)=(h_{(1)}a)(h_{(2)}b)$
for all $h \in H$, $a,b \in A$. Here we use Sweedler's notation
$\Delta h = h_{(1)} \otimes h_{(2)}$ where $\Delta$ is the comultiplication
in $H$.
We refer the reader to~\cite{Danara, Montgomery, Sweedler}
   for an account
  of Hopf algebras and algebras with Hopf algebra actions.

Let $H$ be a Hopf algebra with a basis $(\gamma_\beta)_{\beta \in \Lambda}$.
Denote by $F \langle X | H \rangle$
the free associative algebra over $F$ with free formal
 generators $x_i^{\gamma_\beta}$, $\beta \in \Lambda$, $i \in \mathbb N$.
 Let $x_i^h := \sum_{\beta \in \Lambda} \alpha_\beta x_i^{\gamma_\beta}$
 for $h= \sum_{\beta \in \Lambda} \alpha_\beta \gamma_\beta$, $\alpha_\beta \in F$,
 where only finite number of $\alpha_\beta$ are nonzero.
Here $X := \lbrace x_1, x_2, x_3, \ldots \rbrace$, $x_j := x_j^1$, $1 \in H$. Define
 $$h \cdot (x_{i_1}^{\gamma_{\beta_1}} x_{i_2}^{\gamma_{\beta_2}}\ldots x_{i_{n-1}}^{\gamma_{\beta_{n-1}}}
  x_{i_n}^{\gamma_{\beta_n}}) :=
x_{i_1}^{h_{(1)}\gamma_{\beta_1}} x_{i_2}^{h_{(2)}\gamma_{\beta_2}}\ldots x_{i_{n-1}}^{h_{(n-1)}\gamma_{\beta_{n-1}}} x_{i_n}^{h_{(n)}\gamma_{\beta_n}}$$
for $h \in H$,  $\beta_1, \beta_2, \ldots, \beta_m \in \Lambda$,
where $h_{(1)}\otimes h_{(2)} \otimes \ldots \otimes h_{(n)}$
is the image of $h$ under the comultiplication $\Delta$
applied $(n-1)$ times.  Then $F\langle X | H \rangle$ becomes the free $H$-module algebra with
 free generators $x_j$, $j \in \mathbb N$. We call its elements
 $H$-polynomials.
 Let $A$ be an associative  $H$-module algebra over $F$. An $H$-polynomial
 $f(x_1, \ldots, x_n)\in F\langle X | H \rangle$
 is an \textit{$H$-identity} of $A$ if $f(a_1, \ldots, a_n)=0$
for all $a_i \in A$. In other words, $f$ is an $H$-identity of $A$
if and only if $\psi(f)=0$ for any $H$-homomorphism $\psi \colon
F\langle X | H \rangle \to A$. In this case we write
$f \equiv 0$.
The set $\Id^{H}(A)$ of all $H$-identities
of $A$ is an ideal in $F\langle X | H \rangle$ invariant under the $H$-action.
If $H=F$, then we have the case of ordinary
polynomial identities.

\begin{example}\label{ExampleIdFG}
Let $A$ be an associative algebra with an action 
of a group $G$ by automorphisms only. Note that $H=FG$
is a Hopf algebra with $\Delta g = g \otimes g$, $Sg = g^{-1}$,
$\varepsilon(g)=1$,
for all $g \in G$. Thus group action becomes a Hopf action,
and we may identify $F\langle X | H \rangle=F\langle X | G \rangle$.
Furthermore, $\Id^H(A)=\Id^G(A)$.
\end{example}

\begin{example}\label{ExampleIdH}
Let $M_2(F)$ be the algebra
of $2\times 2$ matrices. Consider $e_0, e_1 \in \End_F(M_2(F))$
defined by the formulas $$e_0 \left(
\begin{array}{cc}
a & b \\
c & d
\end{array}
 \right) := \left(
\begin{array}{rr}
a & 0 \\
0 & d
\end{array}
 \right)$$
 and $$e_1 \left(
\begin{array}{cc}
a & b \\
c & d
\end{array}
 \right) := \left(
\begin{array}{rr}
0 & b \\
c & 0
\end{array}
 \right).$$
 Then $H:=F e_0 \oplus F e_1$ (direct sum of ideals)
 is a Hopf algebra with the counit $\varepsilon$, where $\varepsilon(e_0):=1$,
 $\varepsilon(e_1):=0$, the comultiplication $\Delta$ where
 $$\Delta(e_0):=e_0 \otimes e_0 + e_1 \otimes e_1,$$
 $$\Delta(e_1):=e_0 \otimes e_1 + e_1 \otimes e_0,$$ and the antipode $S:=\id$.
 Note that $x^{e_0}y^{e_0}-y^{e_0}x^{e_0}\in \Id^{H}(M_2(F))$.
\end{example}

Denote by $P^H_n$ the space of all multilinear $H$-polynomials
in $x_1, \ldots, x_n$, $n\in\mathbb N$, i.e.
$$P^{H}_n = \langle x^{h_1}_{\sigma(1)}
x^{h_2}_{\sigma(2)}\ldots x^{h_n}_{\sigma(n)}
\mid h_i \in H, \sigma\in S_n \rangle_F \subset F \langle X | H \rangle.$$
Then the number $c^H_n(A):=\dim\left(\frac{P^H_n}{P^H_n \cap \Id^H(A)}\right)$
is called the $n$th \textit{codimension of polynomial $H$-identities}
or the $n$th \textit{$H$-codimension} of $A$.

Note that in Example~\ref{ExampleIdFG} we have $c^H_n(A)=c^G_n(A)$.

The analog of Amitsur's conjecture for $H$-codimensions can be formulated
as follows.

\begin{conjecture} There exists
 $\PIexp^H(A):=\lim\limits_{n\to\infty}
 \sqrt[n]{c^H_n(A)} \in \mathbb Z_+$.
\end{conjecture}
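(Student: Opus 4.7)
The plan is to prove the existence of $\PIexp^H(A)$ by establishing matching upper and lower bounds of the form $C n^r d^n$ for a single integer $d$, following the same strategy as for Theorem~\ref{TheoremMainG} but replacing the group-graded machinery by $H$-equivariant analogues. I expect the proof to be a companion to Theorem~\ref{TheoremMainG} and, once set up correctly, to reduce to the same combinatorial core.

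First, I would invoke Wedderburn--Malcev in the $H$-equivariant form: under the hypothesis that $H$ is finite dimensional semisimple (or at least that the Jacobson radical $J$ of $A$ is $H$-stable), one can choose a maximal semisimple $H$-subalgebra $B$ with $A = B \oplus J$ as $H$-modules, and decompose $B = B_1 \oplus \cdots \oplus B_q$ into $H$-simple summands. Define the candidate exponent
$$d := \max\Bigl\{ \dim_F(B_{i_1} \oplus \cdots \oplus B_{i_r}) \;\Big|\; B_{i_1} J B_{i_2} J \cdots J B_{i_r} \ne 0,\ i_k \text{ pairwise distinct} \Bigr\},$$
with $d = \dim B$ when $A$ is semisimple and $d = 0$ when $A$ is nilpotent. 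This is exactly the numerical invariant the paper promises to give an explicit formula for in Subsection~\ref{SubsectionGenHopfPIexp}.

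For the upper bound I would use the $S_n$-action on $P^H_n / (P^H_n \cap \Id^H(A))$, which commutes with the $H$-action. The associated $H$-cocharacter decomposes into irreducibles labelled by partitions $\lambda \vdash n$, so by the hook length formula combined with Regev's estimates it suffices to show that the multiplicity $m_\lambda$ vanishes unless $\lambda$ fits inside a strip of bounded width plus a box of area controlled by $d$. This in turn reduces to an $H$-equivariant Amitsur--Capelli phenomenon: alternation over more than $\dim B_i$ variables carrying the $H$-structure of $B_i$ forces the polynomial to take values in $J$, and since $J$ is nilpotent of bounded index, only finitely many such alternating blocks can coexist before the polynomial becomes an identity. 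This yields $c^H_n(A) \le C_2 n^{r_2} d^n$.

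For the lower bound I would construct, for each large $n$, an explicit non-identity multilinear $H$-polynomial. Picking a sequence $B_{i_1}, \ldots, B_{i_r}$ and elements $w_1, \ldots, w_{r-1} \in J$ witnessing the maximum in the definition of $d$, the construction glues together alternating polynomials (one for each $B_{i_k}$, alternating in as many variables as $\dim B_{i_k}$) via the $w_k$, and symmetrises along cosets of a Young subgroup of $S_n$ to produce a highest weight vector for a suitable partition $\lambda$ with roughly $d$ columns of near-equal length. The main technical ingredient is to verify that this vector is not an identity; this is done by a direct evaluation on an $H$-stable basis of each $B_{i_k}$, relying on the $H$-equivariant analogue of the fact that an $H$-simple finite dimensional algebra carries a nondegenerate trace-like pairing compatible with the $H$-action. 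The resulting lower bound $c^H_n(A) \ge C_1 n^{r_1} d^n$ matches the upper bound, proving both parts simultaneously.

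The main obstacle, as already for the Lie case in \cite{ASGordienko2} and the group-graded case in \cite{AljaGiaLa}, is the lower bound: one must produce enough $H$-invariant alternators that survive on each $H$-simple component. This requires nontrivial input from the representation theory of $H \otimes FS_n$, and is precisely the step that breaks down for Sweedler's algebra (where $J$ is not $H$-stable) and motivates the separate treatment in Subsection~\ref{SubsectionSweedler}. Once this lower bound machinery is in place, the existence of the integer limit $\PIexp^H(A) = d$ follows by squeezing.
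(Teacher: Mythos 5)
Your proposal follows essentially the same route as the paper: an $H$-invariant Wedderburn--Malcev decomposition, the identical formula for $d$ in terms of $B_{i_1}JB_{i_2}J\cdots JB_{i_r}\ne 0$, an upper bound obtained by confining the cocharacter to a strip of width $d$ via column-alternation plus nilpotency of $J$, and a lower bound built by gluing alternating polynomials on the $H$-simple components through radical elements, using the nondegenerate form $\tr(\varphi(\cdot)\varphi(\cdot))$ (which, note, exists because the $B_i$ are semisimple as ordinary algebras, not merely $H$-simple --- Sweedler's algebra shows mere $H$-simplicity is not enough). The one ingredient you gloss over is that locating the partitions in a strip does not by itself bound $c^H_n(A)$: one also needs the total multiplicity $\sum_{\lambda\vdash n} m(A,H,\lambda)$ to be polynomially bounded, which the paper imports from Berele's theorem on cocharacters of algebras with Hopf algebra actions.
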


We call $\PIexp^H(A)$ the \textit{Hopf PI-exponent} of $A$.

\begin{theorem}\label{TheoremMainH}
Let $A$ be a finite dimensional non-nilpotent associative algebra
over an algebraically closed field $F$ of characteristic $0$. Suppose a finite dimensional
Hopf algebra $H$ acts on $A$ in such a way that the Jacobson radical $J:=J(A)$
is $H$-invariant and $A = B \oplus J$
(direct sum of $H$-submodules) where $B=B_1 \oplus \ldots \oplus B_q$ (direct sum of $H$-invariant ideals),
$B_i$ are $H$-simple semisimple algebras.
 Then there exist constants $C_1, C_2 > 0$, $r_1, r_2 \in \mathbb R$, $d \in \mathbb N$ such that $C_1 n^{r_1} d^n \leqslant c^{H}_n(A) \leqslant C_2 n^{r_2} d^n$ for all $n \in \mathbb N$.
\end{theorem}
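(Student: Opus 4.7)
The plan is to adapt the Giambruno--Zaicev scheme for ordinary PI-exponents to the Hopf setting. I would first isolate the integer $d$ that will serve as the base of the exponential: define $d$ to be the maximum of $\dim B_{i_1}+\dim B_{i_2}+\cdots+\dim B_{i_k}$ taken over all sequences of pairwise distinct indices $i_1,\dots,i_k\in\{1,\dots,q\}$ such that $B_{i_1}JB_{i_2}J\cdots JB_{i_k}\ne 0$ in $A$. Since $A$ is non-nilpotent, $d\geqslant 1$ (and in fact $d\geqslant\dim B_i$ for the component that survives). The whole proof then reduces to proving $C_1 n^{r_1} d^n \leqslant c_n^H(A)\leqslant C_2 n^{r_2} d^n$ for this specific $d$.

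For the upper bound, I would work with the $S_n$-cocharacter $\chi_n^H(A)=\sum_{\lambda\vdash n} m_\lambda \chi_\lambda$ of $P_n^H/(P_n^H\cap \Id^H(A))$. Fixing a basis $\gamma_1,\dots,\gamma_s$ of $H$, each multilinear $H$-polynomial is an ordinary multilinear polynomial in the marked variables $x_i^{\gamma_j}$, which allows Regev-type tensor-product arguments. The key combinatorial claim is that $m_\lambda=0$ whenever $\lambda$ has more than $d$ parts of length exceeding some fixed constant $\ell$ depending only on the nilpotency index $p$ of $J$. This follows by the standard alternation argument: a Young symmetrizer that alternates $\dim B_i+1$ variables landing in the $H$-simple component $B_i$ annihilates any evaluation, and in each non-zero evaluation one can route at most $d$ of the alternating columns through the semisimple part while any remaining excursion into $J$ contributes a uniformly bounded amount before nilpotency kills the monomial. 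Combining this height-strip restriction with the hook-length formula produces the desired polynomial-times-$d^n$ bound.

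For the lower bound, I would construct an explicit sequence of multilinear $H$-polynomials that are not identities. Fix a sequence $i_1,\dots,i_k$ realizing $d$, and elements $b_1\in B_{i_1},\dots,b_k\in B_{i_k}$ and $c_1,\dots,c_{k-1}\in J$ with $b_1c_1b_2c_2\cdots c_{k-1}b_k\ne 0$. Because each $B_j$ is $H$-simple semisimple, one can employ its $H$-analog of the regular representation to produce, for every $j$, an $H$-polynomial that alternates $\dim B_{i_j}$ variables and whose evaluations span $B_{i_j}$ (this is where the hypothesis that $B$ splits as a sum of $H$-invariant $H$-simple ideals is essential). Glueing $k$ such alternations together with the radical elements $c_t$ inserted between them and taking many copies of each alternating block yields a polynomial whose non-vanishing on $A$ can be verified directly. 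The corresponding irreducible $S_n$-summand has a rectangular-type Young diagram with $d$ long rows, whose dimension is bounded below by $C_1 n^{r_1} d^n$ via Stirling's formula, supplying the lower bound for $c_n^H(A)$.

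The main obstacle is the matching between the upper and lower bounds, i.e.\ showing that exactly the same integer $d$ appears on both sides. The lower bound naturally yields $d$, but the upper bound must exclude strictly taller Young diagrams. This is the Hopf analog of the height-strip theorem and requires a Kemer-style alternation lemma: one must show that whenever a multilinear $H$-polynomial alternates more than $d$ disjoint blocks, each of size at least the largest $\dim B_i$, any evaluation vanishes. The delicate point is that the $H$-action distributes through products via the comultiplication, so the ``tag'' $\gamma_\beta$ on a variable interacts nontrivially with the position of that variable in the product; handling this forces one to regard $A$ as a module over a larger associative algebra and to prove the alternation lemma inside that larger framework. Once this lemma is established, the rest of the argument is routine symmetric-group bookkeeping.
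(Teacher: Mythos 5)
Your proposal follows essentially the same route as the paper: the same formula for $d$ in terms of nonzero products $B_{i_1}JB_{i_2}J\cdots JB_{i_k}$, the same upper bound via a shape restriction on the Young diagrams occurring in the cocharacter (at most $d$ long rows, enforced by column alternation together with $J^p=0$) combined with a polynomial bound on the colength, and the same lower bound via multialternating non-identities built from the regular representation of each $H$-simple $B_{i_j}$, glued through radical elements, with Stirling's formula applied to a rectangular subdiagram with $d$ long rows. One caution: the ``alternation lemma'' as phrased in your final paragraph --- that any multilinear $H$-polynomial alternating in more than $d$ disjoint blocks, each of size at least $\max_i\dim B_i$, vanishes on $A$ --- is false as stated and would contradict your own lower-bound construction (which produces non-identities alternating in arbitrarily many blocks of size $d$); the correct statement is the one in your upper-bound paragraph, namely that the multiplicity $m_\lambda$ vanishes once the number of boxes of $D_\lambda$ below row $d$ reaches the nilpotency index $p$ of $J$.
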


\begin{remark}
If $A$ is nilpotent, i.e. $x_1 \ldots x_p\equiv 0$ for some $p\in\mathbb N$, then
$P^{H}_n \subseteq \Id^{H}(A)$ and $c^H_n(A)=0$ for all $n \geqslant p$.
\end{remark}

Theorem~\ref{TheoremMainH} will be obtained as a consequence of Theorem~\ref{TheoremMain}
in Subsection~\ref{SubsectionGenHopfPIexp}.
Note that here we require the existence of $H$-invariant Wedderburn
decompositions. However, if $H$ is semisimple, then such decompositions
always exist. We discuss this in Section~\ref{SectionHSemisimple} and derive from Theorem~\ref{TheoremMainH}
the following

\begin{theorem}\label{TheoremMainHSS}
Let $A$ be a finite dimensional non-nilpotent $H$-module associative algebra
over a field $F$ of characteristic $0$, where $H$ is a finite dimensional
semisimple Hopf algebra.
 Then there exist constants $C_1, C_2 > 0$, $r_1, r_2 \in \mathbb R$,
  $d \in \mathbb N$ such that $C_1 n^{r_1} d^n \leqslant c^{H}_n(A)
   \leqslant C_2 n^{r_2} d^n$ for all $n \in \mathbb N$.
\end{theorem}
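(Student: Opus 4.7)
The plan is to reduce Theorem~\ref{TheoremMainHSS} to Theorem~\ref{TheoremMainH} by verifying that, when $H$ is a finite dimensional semisimple Hopf algebra, the structural hypotheses of the latter theorem are automatically satisfied: namely, (i)~$J=J(A)$ is $H$-invariant; (ii)~there is an $H$-invariant Wedderburn-Malcev decomposition $A=B\oplus J$ with $B$ an $H$-stable semisimple subalgebra; and (iii)~$B$ decomposes as a sum of $H$-invariant ideals $B_1\oplus\dots\oplus B_q$ that are $H$-simple and semisimple as algebras.

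As noted in the introduction, codimensions are invariant under extension of the base field, so we may pass to the algebraic closure of $F$. Semisimplicity of the finite dimensional Hopf algebra $H$ is preserved under this extension, since by Maschke's theorem for Hopf algebras it is equivalent to the existence of a left integral $t\in H$ with $\varepsilon(t)=1$, a condition unaffected by base change. Such a $t$ yields an averaging operator producing $H$-module projections onto $H$-invariants of any $H$-module. Using this, (i) follows from the known fact (Linchenko's theorem) that the Jacobson radical of a finite dimensional associative algebra is stable under any action of a semisimple Hopf algebra. For (ii), the idea is to start with any classical Wedderburn-Malcev complement $A=B_0\oplus J$, average the associated algebra projector by $t$, and then iteratively correct the result modulo higher powers of $J$ to obtain an $H$-equivariant idempotent algebra homomorphism whose image $B$ is an $H$-invariant semisimple complement to $J$; this is the $H$-module analog of the Wedderburn-Malcev theorem.

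For (iii), $B$ is a finite dimensional semisimple associative algebra carrying an $H$-action by algebra automorphisms. The DCC on $H$-invariant two-sided ideals then gives a decomposition $B=B_1\oplus\dots\oplus B_q$ into minimal $H$-invariant two-sided ideals. Each $B_i$ is a sum of some of the simple matrix components of $B$, hence semisimple as an algebra, and by minimality contains no proper $H$-invariant two-sided ideal, so it is $H$-simple. With (i)-(iii) in hand, Theorem~\ref{TheoremMainH} applies directly and yields the asserted bounds $C_1 n^{r_1} d^n\leqslant c^H_n(A)\leqslant C_2 n^{r_2} d^n$. The main technical input is the $H$-invariant Wedderburn-Malcev decomposition in step~(ii); the remaining steps are essentially bookkeeping with the averaging operator furnished by the integral $t$, together with standard structure theory of finite dimensional semisimple algebras with a group (or Hopf) action.
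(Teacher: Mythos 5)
Your overall reduction is exactly the paper's: pass to the algebraic closure, establish (i) $H$-invariance of $J$ (the paper cites Linchenko--Montgomery--Small), (ii) an $H$-invariant Wedderburn--Malcev decomposition (the paper cites \c Stefan--Van Oystaeyen rather than re-proving it), and (iii) a decomposition of the $H$-stable semisimple part $B$ into $H$-simple ideals, then invoke Theorem~\ref{TheoremMainH}. Steps (i) and (ii) are fine as you present them.

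Step (iii), however, contains a genuine gap. You assert that the DCC on $H$-invariant two-sided ideals ``gives a decomposition $B=B_1\oplus\dots\oplus B_q$ into minimal $H$-invariant two-sided ideals.'' DCC only produces minimal invariant ideals; it does not produce a direct sum decomposition. What is needed is the splitting property: every $H$-invariant two-sided ideal of $B$ admits an $H$-invariant complement. This is not automatic, because --- contrary to your phrasing --- $H$ does \emph{not} act on $B$ by algebra automorphisms (that would require $\Delta h = h\otimes h$); the rule $h(ab)=(h_{(1)}a)(h_{(2)}b)$ means individual elements of $H$ need not permute the simple components of $B$, so the sum of the remaining matrix blocks is not obviously $H$-stable. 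This is precisely where the paper does real work: Lemma~\ref{LemmaHBSS} averages the bimodule projection $\pi$ onto an invariant ideal via a left integral $t$ with $\varepsilon(t)=1$, setting $\tilde\pi(a)=t_{(1)}\pi\bigl((St_{(2)})a\bigr)$, and verifying that $\tilde\pi$ is still a $B$-bimodule and $H$-module projection requires Lemma~\ref{LemmaIntegral} ($t_{(1)}St_{(3)}\otimes t_{(2)}=1_H\otimes t$), which in turn uses the Larson--Radford theorem $S^2=\id_H$. You do mention the averaging operator coming from $t$ in connection with (i) and (ii), but you never apply it where it is actually indispensable, namely in (iii). The gap is repairable (either by the paper's Maschke-type argument, or by checking directly with the antipode that the two-sided annihilator of an $H$-invariant ideal of the semisimple algebra $B$ is again $H$-invariant), but as written the decomposition of $B$ is unjustified.
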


\begin{corollary}
The above analog of Amitsur's conjecture holds
 for such codimensions.
\end{corollary}
%
%\begin{example}\label{ExampleGroupTwisted}
%If $G$ is a finite group, then $H:=FG$ is a finite dimensional
%Hopf algebra. Using twisting we can construct new   finite dimensional semisimple
%Hopf algebras. Namely, if $\gamma \in H \otimes H$ is an invertible element
%such that $(1 \otimes \gamma)(\id_H \otimes \Delta)(\gamma)
%=(\gamma \otimes 1)(\Delta \otimes \id_H)(\gamma)$
%and $ (\id_H \otimes \varepsilon)(\gamma) = (\varepsilon \otimes \id_H )(\gamma) = 1_H$,
%then $H$ with the same multiplication and the comultiplication $\Delta_\gamma(h)
%= \gamma \Delta(h) \gamma^{-1}$ for $h \in H$, is again a finite dimensional semisimple Hopf algebra. 
%\end{example}

\subsection{Graded polynomial identities and their codimensions}\label{SubsectionGraded}

Let $G$ be a group and $F$ be a field. Denote by $F\langle X^{\mathrm{gr}} \rangle $ the free $G$-graded associative  algebra over $F$ on the countable set $$X^{\mathrm{gr}}:=\bigcup_{g \in G}X^{(g)},$$ $X^{(g)} = \{ x^{(g)}_1,
x^{(g)}_2, \ldots \}$,  i.e. the algebra of polynomials
 in non-commuting variables from $X^{\mathrm{gr}}$.
  The indeterminates from $X^{(g)}$ are said to be homogeneous of degree
$g$. The $G$-degree of a monomial $x^{(g_1)}_{i_1} \dots x^{(g_t)}_{i_t} \in F\langle
 X^{\mathrm{gr}} \rangle $ is defined to
be $g_1 g_2 \dots g_t$, as opposed to its total degree, which is defined to be $t$. Denote by
$F\langle
 X^{\mathrm{gr}} \rangle^{(g)}$ the subspace of the algebra $F\langle
 X^{\mathrm{gr}} \rangle$ spanned
 by all the monomials having
$G$-degree $g$. Notice that $$F\langle
 X^{\mathrm{gr}} \rangle^{(g)} F\langle
 X^{\mathrm{gr}} \rangle^{(h)} \subseteq F\langle
 X^{\mathrm{gr}} \rangle^{(gh)},$$ for every $g, h \in G$. It follows that
$$F\langle
 X^{\mathrm{gr}} \rangle =\bigoplus_{g\in G} F\langle
 X^{\mathrm{gr}} \rangle^{(g)}$$ is a $G$-grading.
  Let $f=f(x^{(g_1)}_{i_1}, \dots, x^{(g_t)}_{i_t}) \in F\langle
 X^{\mathrm{gr}} \rangle$.
We say that $f$ is
a \textit{graded polynomial identity} of
 a $G$-graded algebra $A=\bigoplus_{g\in G}
A^{(g)}$
and write $f\equiv 0$
if $f(a^{(g_1)}_{i_1}, \dots, a^{(g_t)}_{i_t})=0$
for all $a^{(g_j)}_{i_j} \in A^{(g_j)}$, $1 \leqslant j \leqslant t$.
  The set $\Id^{\mathrm{gr}}(A)$ of graded polynomial identities of
   $A$ is
a graded ideal of $F\langle
 X^{\mathrm{gr}} \rangle$.
The case of ordinary polynomial identities is included
for the trivial group $G=\lbrace e \rbrace$.

\begin{example}\label{ExampleIdGr}
 Let $G=\mathbb Z_2 = \lbrace \bar 0, \bar 1 \rbrace$,
$M_2(F)=M_2(F)^{(\bar 0)}\oplus M_2(F)^{(\bar 1)}$
where $M_2(F)^{(\bar 0)}=\left(
\begin{array}{cc}
F & 0 \\
0 & F
\end{array}
 \right)$ and $M_2(F)^{(\bar 1)}=\left(
\begin{array}{cc}
0 & F \\
F & 0
\end{array}
 \right)$. Then  $x^{(\bar 0)} y^{(\bar 0)} - y^{(\bar 0)} x^{(\bar 0)}
\in \Id^{\mathrm{gr}}(M_2(F))$.
\end{example}

Let
$P^{\mathrm{gr}}_n := \langle x^{(g_1)}_{\sigma(1)}
x^{(g_2)}_{\sigma(2)}\ldots x^{(g_n)}_{\sigma(n)}
\mid g_i \in G, \sigma\in S_n \rangle_F \subset F \langle X^{\mathrm{gr}} \rangle$, $n \in \mathbb N$.
Then the number $$c^{\mathrm{gr}}_n(A):=\dim\left(\frac{P^{\mathrm{gr}}_n}{P^{\mathrm{gr}}_n \cap \Id^{\mathrm{gr}}(A)}\right)$$
is called the $n$th \textit{codimension of graded polynomial identities}
or the $n$th \textit{graded codimension} of $A$.

The analog of Amitsur's conjecture for graded codimensions can be formulated
as follows.

\begin{conjecture} There exists
 $\PIexp^{\mathrm{gr}}(A):=\lim\limits_{n\to\infty} \sqrt[n]{c^\mathrm{gr}_n(A)} \in \mathbb Z_+$.
\end{conjecture}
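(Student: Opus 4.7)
The plan is to reduce the graded case to the Hopf-module case via the standard duality between $G$-gradings and $(FG)^*$-actions, and then invoke Theorem~\ref{TheoremMainHSS}. Let $H := (FG)^*$ be the dual Hopf algebra of the group algebra $FG$; since $G$ is finite and $\mathrm{char}\, F = 0$, $H$ is finite dimensional, and (by Maschke for $FG$, dualized) semisimple.

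First I would record that, for a finite group $G$, a $G$-grading on an associative algebra $A$ is precisely an $H$-module algebra structure. With dual basis $\{e_g : g \in G\}$ of $H$, the action $e_g \cdot a := a^{(g)}$ (projection onto the $g$-homogeneous component) makes $A$ an $H$-module algebra: the comultiplication $\Delta e_g = \sum_{h \in G} e_h \otimes e_{h^{-1}g}$ turns the module-algebra axiom $e_g \cdot (ab) = \sum_h (e_h \cdot a)(e_{h^{-1}g} \cdot b)$ into $(ab)^{(g)} = \sum_h a^{(h)} b^{(h^{-1}g)}$, which is exactly compatibility of the grading with multiplication. Conversely, any $H$-action yields a $G$-grading via the eigenspaces of the orthogonal idempotents $e_g$, since $1_H = \sum_g e_g$.

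Second, the correspondence $x_i^{(g)} \leftrightarrow x_i^{e_g}$ extends to an identification of the free graded algebra $F\langle X^{\mathrm{gr}}\rangle$ with the free $H$-module algebra $F\langle X \mid H\rangle$ that carries $P_n^{\mathrm{gr}}$ to $P_n^H$ and $\Id^{\mathrm{gr}}(A)$ to $\Id^H(A)$; in particular $c_n^{\mathrm{gr}}(A) = c_n^H(A)$ for every $n$. Under the hypothesis that $A$ is finite dimensional and non-nilpotent, Theorem~\ref{TheoremMainHSS} then supplies constants $C_1, C_2 > 0$, $r_1, r_2 \in \mathbb R$, $d \in \mathbb N$ with $C_1 n^{r_1} d^n \leqslant c_n^{\mathrm{gr}}(A) \leqslant C_2 n^{r_2} d^n$, from which $\PIexp^{\mathrm{gr}}(A) = d \in \mathbb Z_+$ by taking $n$-th roots and passing to the limit. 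The nilpotent case is immediate since $c_n^{\mathrm{gr}}(A) = 0$ for $n$ large.

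The principal technical point is verifying that the passage between the graded and Hopf set-ups preserves the codimension sequence on the nose: a graded identity is tested only on homogeneous tuples whose degrees match the grading labels of the variables, whereas an $H$-identity is tested on arbitrary tuples with the $H$-action built into the variables, and one must check that the orthogonal decomposition $a = \sum_g e_g\cdot a$ lets these two regimes be converted into one another without loss or redundancy. Once this dictionary is in place, together with the semisimplicity of $(FG)^*$, the conjecture is a direct corollary of the semisimple Hopf result already at hand.
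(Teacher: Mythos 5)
Your proposal is correct and follows the same route as the paper: the duality $x_i^{(g)}\leftrightarrow x_i^{h_g}$ between $F\langle X^{\mathrm{gr}}\rangle$ and $F\langle X\mid (FG)^*\rangle$ (Lemma~\ref{LemmaGradAction}), the equality $c_n^{\mathrm{gr}}(A)=c_n^{H}(A)$, and the observation that $(FG)^*$ is a direct sum of fields, hence a semisimple Hopf algebra, so Theorem~\ref{TheoremMainHSS} applies. (The only nitpick: semisimplicity of $(FG)^*$ needs no Maschke-type argument --- it is the direct sum $\bigoplus_{g\in G}Fh_g$ of fields by the orthogonality $h_{g_1}h_{g_2}=\delta_{g_1,g_2}h_{g_1}$ of the dual basis, in any characteristic.)
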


Using techniques different from ours, E.~Aljadeff and A.~Giambruno~\cite{AljaGia} proved
in 2011 the analog Amitsur's conjecture for graded codimensions of all associative (not necessarily finite dimensional) PI-algebras.
However, for finite dimensional algebras,
this result can be easily derived from Theorem~\ref{TheoremMainHSS}
 using Lemma~\ref{LemmaGradAction} in Subsection~\ref{SubsectionDuality} below.
\begin{theorem}\label{TheoremMainGr}
Let $A$ be a finite dimensional non-nilpotent associative algebra
over a field $F$ of characteristic $0$, graded by a finite group $G$. Then
there exist constants $C_1, C_2 > 0$, $r_1, r_2 \in \mathbb R$, $d \in \mathbb N$
such that $C_1 n^{r_1} d^n \leqslant c^{\mathrm{gr}}_n(A) \leqslant C_2 n^{r_2} d^n$
for all $n \in \mathbb N$.
\end{theorem}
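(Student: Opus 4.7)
The plan is to reduce Theorem~\ref{TheoremMainGr} to Theorem~\ref{TheoremMainHSS} via the classical duality between $G$-gradings and actions of the dual Hopf algebra, exactly as the excerpt advertises. Concretely, let $H := (FG)^*$ with its standard Hopf algebra structure; since $G$ is finite, $H$ is finite dimensional, and since $H$ is commutative and isomorphic as an algebra to $F^{|G|}$, it is semisimple (in characteristic $0$ one can alternatively invoke Maschke's theorem for Hopf algebras). The idempotents $\{e_g : g \in G\}$ dual to the group basis act on any $G$-graded algebra $A = \bigoplus_{g \in G} A^{(g)}$ as the projections onto the homogeneous components, which defines a $H$-module algebra structure; this is precisely the content of the forthcoming Lemma~\ref{LemmaGradAction}.

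Next, I would identify graded codimensions with $H$-codimensions. Under the correspondence $x_i^{(g)} \longleftrightarrow x_i^{e_g}$, the multilinear space $P^{\mathrm{gr}}_n$ is canonically identified with $P^H_n$: monomials match basis-by-basis once one expands each $x_i^{h}$ in the idempotent basis of $H$. Moreover, under this identification a graded polynomial vanishes on $A$ if and only if the corresponding $H$-polynomial does, because $e_g$ acts by projection and therefore a substitution $a_i \in A^{(g_i)}$ is faithfully recorded by $x_i^{e_{g_i}} \mapsto a_i$ while $x_i^{e_{g'}} \mapsto 0$ for $g' \ne g_i$. Thus $\Id^{\mathrm{gr}}(A)\cap P^{\mathrm{gr}}_n$ corresponds to $\Id^H(A)\cap P^H_n$, and hence $c^{\mathrm{gr}}_n(A) = c^H_n(A)$ for every $n$. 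Note that $A$ is non-nilpotent as a graded algebra precisely when it is non-nilpotent as an $H$-module algebra, so the hypothesis of Theorem~\ref{TheoremMainHSS} is met.

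With these two ingredients in hand, the theorem is immediate: applying Theorem~\ref{TheoremMainHSS} to $A$ regarded as an $H$-module algebra with $H=(FG)^*$ yields constants $C_1,C_2>0$, $r_1,r_2\in\mathbb R$, $d\in\mathbb N$ such that $C_1 n^{r_1} d^n \leqslant c^H_n(A) \leqslant C_2 n^{r_2} d^n$, and the identification above transports these bounds verbatim to $c^{\mathrm{gr}}_n(A)$. The only genuine work is Lemma~\ref{LemmaGradAction} together with the codimension identification, and neither step is intrinsically hard; the subtle point one must be careful about is that $H=(FG)^*$ really is a Hopf algebra and really is semisimple, which requires the finiteness of $G$ (the construction of the dual Hopf algebra needs $\dim H<\infty$) but \emph{not} any Maschke-type restriction on $|G|$ relative to $\operatorname{char} F$, since $(FG)^*$ is semisimple over every field. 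Thus the main obstacle is purely bookkeeping: writing the isomorphism $P^{\mathrm{gr}}_n \cong P^H_n$ in a way that is compatible with evaluation on $A$, which I would do by expanding each $h\in H$ in the basis $\{e_g\}$ and checking compatibility on monomials.
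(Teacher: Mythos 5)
Your proposal is correct and follows exactly the paper's route: the author likewise deduces Theorem~\ref{TheoremMainGr} from Theorem~\ref{TheoremMainHSS} by setting $H=(FG)^*$, which is a finite dimensional semisimple Hopf algebra (a direct sum of fields), and invoking Lemma~\ref{LemmaGradAction} to identify $c^{\mathrm{gr}}_n(A)$ with $c^H_n(A)$. Your additional observations (no Maschke-type restriction on $|G|$ versus $\operatorname{char}F$, and the monomial-by-monomial matching of $P^{\mathrm{gr}}_n$ with $P^H_n$) are accurate and consistent with the paper's treatment.
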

\begin{corollary}
The above analog of Amitsur's conjecture holds for such codimensions.
\end{corollary}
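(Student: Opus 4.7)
The plan is to deduce the corollary directly from Theorem~\ref{TheoremMainGr} by a squeeze argument on $\sqrt[n]{c^{\mathrm{gr}}_n(A)}$, treating the nilpotent and non-nilpotent cases separately.

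First I would dispose of the nilpotent case. If $x_1\cdots x_p \equiv 0$ on $A$ for some $p\in\mathbb N$, then any multilinear graded monomial of degree $n\geqslant p$ evaluates to zero on $A$ regardless of the chosen $G$-grading on the inputs, because the corresponding ordinary multilinear monomial already vanishes. Hence $P^{\mathrm{gr}}_n \subseteq \Id^{\mathrm{gr}}(A)$, so $c^{\mathrm{gr}}_n(A)=0$ for all $n\geqslant p$. Consequently $\sqrt[n]{c^{\mathrm{gr}}_n(A)}=0$ eventually, and $\PIexp^{\mathrm{gr}}(A)=0 \in \mathbb Z_+$.

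Next I would handle the non-nilpotent case by invoking Theorem~\ref{TheoremMainGr} to obtain constants $C_1,C_2>0$, $r_1,r_2\in\mathbb R$, and $d\in\mathbb N$ with
\[
C_1 n^{r_1} d^n \;\leqslant\; c^{\mathrm{gr}}_n(A) \;\leqslant\; C_2 n^{r_2} d^n \qquad \text{for all } n\in\mathbb N.
\]
Since $A$ is non-nilpotent and $P^{\mathrm{gr}}_n$ contains the ordinary multilinear polynomials, the lower bound forces $d\geqslant 1$. Taking $n$-th roots gives
\[
d\cdot (C_1 n^{r_1})^{1/n} \;\leqslant\; \sqrt[n]{c^{\mathrm{gr}}_n(A)} \;\leqslant\; d\cdot (C_2 n^{r_2})^{1/n}.
\]
For any constants $C>0$, $r\in\mathbb R$ we have $(Cn^r)^{1/n} = \exp\!\bigl(\tfrac{\log C + r\log n}{n}\bigr) \to 1$ as $n\to\infty$. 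Applying this to both the left-hand and right-hand factors and using the squeeze theorem, the limit $\lim_{n\to\infty}\sqrt[n]{c^{\mathrm{gr}}_n(A)}$ exists and equals $d\in\mathbb N\subset\mathbb Z_+$.

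There is essentially no obstacle here: Theorem~\ref{TheoremMainGr} has already done the work of establishing polynomial-times-exponential bounds with an integer base $d$, and the corollary is a purely analytic consequence via $\sqrt[n]{n^r}\to 1$. The only minor subtlety is ensuring that the nilpotent case is covered (since Theorem~\ref{TheoremMainGr} assumes non-nilpotency), which is handled by the direct observation above that the graded codimensions vanish for sufficiently large $n$.
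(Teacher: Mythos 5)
Your proof is correct and matches the paper's (implicit) argument exactly: the corollary follows from Theorem~\ref{TheoremMainGr} by taking $n$-th roots and using $\sqrt[n]{Cn^r}\to 1$, while the nilpotent case is covered by the remark that $c^{\mathrm{gr}}_n(A)=0$ for $n\geqslant p$. Nothing further is needed.
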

\begin{remark}
If $A$ is nilpotent, i.e. $x_1 \ldots x_p \equiv 0$ for some $p\in\mathbb N$, then  $P^{\mathrm{gr}}_n \subseteq \Id^{\mathrm{gr}}(A)$ and $c^{\mathrm{gr}}_n(A)=0$ for all $n \geqslant p$.
\end{remark}

\subsection{Duality between gradings and Hopf actions}\label{SubsectionDuality}

Let $A=\bigoplus_{g\in G} A^{(g)}$ be a graded algebra over a field $F$.
 Then $A$ becomes an $FG$-comodule algebra
where $FG$ is the group algebra. The comodule map $\rho \colon A \to A \otimes FG$
is defined by $\rho(a_g): =a_g \otimes g$ for $a_g \in A^{(g)}$.
Again we use Sweedler's notation $\rho(a)=a_{(0)}\otimes a_{(1)}$, $a_{(0)} \in A$,
$a_{(1)} \in FG$. 

 Suppose $G$
is finite. Let $H:=(FG)^*$ be the Hopf algebra dual to $FG$. Then $A$ is an $H$-module algebra
  where $h a = h(a_{(1)}) a_{(0)}$, $h \in H$, $a \in A$.
 Conversely, each $H$-module algebra $A$ has the following $G$-grading:
 $A=\bigoplus_{g\in G} A^{(g)}$ where $$A^{(g)}=\lbrace a \in A
 \mid h  a  = h(g) a \text{ for all } h \in H\rbrace.$$
  In particular, $F\langle X | H \rangle$ and $F\langle X^{\mathrm{gr}} \rangle$
 are both $H$-module and $G$-graded algebras.
 Let $(h_g)_{g\in G}$ be the basis in $H=(FG)^*$ dual
 to the basis $(g)_{g\in G}$ of $FG$, i.e. $$h_{g_1}(g_2)=\left\lbrace
  \begin{array}{ll}
  1, & g_1 = g_2, \\
  0, & g_1 \ne g_2.
  \end{array} \right.$$
  Note that $$h_{g_1}h_{g_2}=\left\lbrace
  \begin{array}{ll}
  h_{g_1}, & g_1 = g_2, \\
  0, & g_1 \ne g_2,
  \end{array} \right.$$ i.e. $H$ is the direct sum of fields.
 Moreover, the $H$-homomorphism $\varphi \colon F\langle X | H \rangle
 \to F\langle X^{\mathrm{gr}} \rangle$ defined
 by $\varphi(x_j)=\sum_{g\in G} x^{(g)}_j$
 is an isomorphism since $\varphi^{-1}$
 is the graded homomorphism $F\langle X^{\mathrm{gr}} \rangle
 \to F\langle X | H \rangle$ defined by $\varphi^{-1}(x^{(g)}_j)
 = x^{h_g}_j$, $g\in G$, $j \in \mathbb N$.

\begin{lemma}\label{LemmaGradAction}
Let $A$ be a $G$-graded associative algebra where $G$ is a finite group.
Consider the corresponding $H$-action on $A$ where $H=(FG)^*$. Then
 \begin{enumerate}
\item $\varphi(\Id^H(A))=\Id^{\mathrm{gr}}(A)$;
\item $c^{H}_n(A)=c^{\mathrm{gr}}_n(A)$.
\end{enumerate}
\end{lemma}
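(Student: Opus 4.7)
The plan is to exploit the explicit $H$-isomorphism $\varphi\colon F\langle X\mid H\rangle\to F\langle X^{\mathrm{gr}}\rangle$ introduced just before the lemma and show that, under $\varphi$, the spaces of multilinear polynomials and the ideals of identities correspond. The single key calculation is this: since $\varphi$ is $H$-linear, $\varphi(x_j)=\sum_{g\in G}x_j^{(g)}$, and $h_g(g')=\delta_{g,g'}$,
$$\varphi(x_j^{h_g})\;=\;h_g\cdot\varphi(x_j)\;=\;\sum_{g'\in G}h_g(g')\,x_j^{(g')}\;=\;x_j^{(g)}.$$
Because $(h_g)_{g\in G}$ is a basis of $H$, this already implies that $\varphi$ sends each multilinear $H$-monomial $x_{\sigma(1)}^{h_{g_1}}\cdots x_{\sigma(n)}^{h_{g_n}}$ to the multilinear graded monomial $x_{\sigma(1)}^{(g_1)}\cdots x_{\sigma(n)}^{(g_n)}$, and hence restricts to a linear isomorphism $P^{H}_n\to P^{\mathrm{gr}}_n$.

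For part (1), I would compare the two notions of substitution directly. Write an arbitrary $f\in F\langle X\mid H\rangle$ in the basis $\{x_j^{h_g}\}$ and choose $a_1,\ldots,a_n\in A$ with homogeneous decomposition $a_j=\sum_{g\in G}a_j^{(g)}$ with respect to the $G$-grading of $A$ coming from the $H$-action. The variable $x_j^{h_g}$ then evaluates to
$$h_g\cdot a_j\;=\;\sum_{g'\in G}h_g(g')\,a_j^{(g')}\;=\;a_j^{(g)},$$
which is precisely the value assigned to $x_j^{(g)}$ when $\varphi(f)$ is evaluated on the homogeneous elements $a_j^{(g)}\in A^{(g)}$. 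Hence $f(a_1,\ldots,a_n)=\varphi(f)\bigl((a_j^{(g)})_{j,g}\bigr)$. Conversely, any tuple $(c_{j,g})$ with $c_{j,g}\in A^{(g)}$ is realized by the choice $a_j:=\sum_{g\in G}c_{j,g}$, so $f\in\Id^{H}(A)$ if and only if $\varphi(f)\in\Id^{\mathrm{gr}}(A)$, which is (1).

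Part (2) then follows formally: combining the two previous observations, $\varphi$ induces a linear isomorphism
$$\frac{P^{H}_n}{P^{H}_n\cap\Id^{H}(A)}\;\xrightarrow{\ \sim\ }\;\frac{P^{\mathrm{gr}}_n}{P^{\mathrm{gr}}_n\cap\Id^{\mathrm{gr}}(A)},$$
so the two quotients have the same dimension, i.e.\ $c^{H}_n(A)=c^{\mathrm{gr}}_n(A)$. The only point that deserves a moment's attention is the potential ambiguity in the $G$-grading of $A$: the given one and the one recovered from the $H$-action via $A^{(g)}=\{a\in A\mid h\cdot a=h(g)a\text{ for all }h\in H\}$. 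The duality recalled at the beginning of the subsection guarantees that these coincide, so the substitution argument above is unambiguous. I do not expect any genuine obstacle; the lemma is really just a careful bookkeeping of the correspondence $x_j^{h_g}\leftrightarrow x_j^{(g)}$.
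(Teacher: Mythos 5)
Your proposal is correct and follows essentially the same route as the paper: the paper simply declares assertion (1) evident and deduces (2) from the equality $\varphi(P^H_n)=P^{\mathrm{gr}}_n$, which is exactly the correspondence $x_j^{h_g}\leftrightarrow x_j^{(g)}$ you verify in detail. Your write-up just makes explicit the bookkeeping (the substitution comparison and the recovery of the grading from the $H$-action) that the paper leaves to the reader.
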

\begin{proof}
The first assertion is evident. The second assertion follows
from the first one and the equality
$\varphi(P^H_n)=P^{\mathrm{gr}}_n$.
\end{proof}

 \begin{remark}
  The $H$-action and the polynomial $H$-identity from Example~\ref{ExampleIdH}
 are dual to the grading and the graded polynomial identity from Example~\ref{ExampleIdGr}.
 \end{remark}

Using Lemma~\ref{LemmaGradAction} and the fact that $(FG)^*$ is the sum of fields, we deduce
 Theorem~\ref{TheoremMainGr} from Theorem~\ref{TheoremMainHSS}.
 
  \section{$H$-module algebras for semisimple $H$}\label{SectionHSemisimple}
  
  In this section we derive Theorem~\ref{TheoremMainHSS} from Theorem~\ref{TheoremMainH}.
  
  Recall that $t \in H$ is  a \textit{left integral}
if $ht=\varepsilon(h)t$ for all $H$.

\begin{lemma}\label{LemmaIntegral}
Let $t$ be a left integral of a finite dimensional semisimple Hopf algebra
over a field $F$ of characteristic~$0$. 
Then $t_{(1)}St_{(3)}\otimes t_{(2)}=1_H \otimes t$.
\end{lemma}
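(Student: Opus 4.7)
The plan rests on three features of semisimple Hopf algebras over a field of characteristic~$0$: (i) $\varepsilon(t)\neq 0$, allowing us to normalize so that $\varepsilon(t)=1$; (ii) by Larson--Radford, $S^{2} = \id_{H}$; and (iii) unimodularity (equivalent to semisimplicity of $H^{*}$ in this setting), which makes $t$ a two-sided integral and consequently forces $S(t)=t$, since $S$ sends a left integral to a right integral and, together with the normalization, fixes the scalar.

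From the relation $\Delta^{(2)} \circ S = (S\otimes S\otimes S)\circ\tau\circ\Delta^{(2)}$ (with $\tau$ reversing the three tensor slots), applying $\Delta^{(2)}$ to both sides of $S(t)=t$ yields the mirror symmetry
\[
t_{(1)}\otimes t_{(2)}\otimes t_{(3)} = S(t_{(3)})\otimes S(t_{(2)})\otimes S(t_{(1)}).
\]
Using this together with $S^{2}=\id$, the target identity $t_{(1)}S(t_{(3)})\otimes t_{(2)} = 1_{H}\otimes t$ transforms, after applying $\id\otimes S$, into the equivalent statement $S(t_{(3)})t_{(1)}\otimes t_{(2)} = 1_{H}\otimes t$.

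To establish this, I would start from the standard antipode identity $S(t_{(1)})t_{(2)}\otimes t_{(3)}=1_{H}\otimes t$ and bring in the three-component left integral relation $h_{(1)}t_{(1)}\otimes h_{(2)}t_{(2)}\otimes h_{(3)}t_{(3)} = \varepsilon(h)\,t_{(1)}\otimes t_{(2)}\otimes t_{(3)}$ obtained by applying $\Delta^{(2)}$ to $ht = \varepsilon(h)t$. The auxiliary identity $a_{(1)}t\otimes a_{(2)} = t\otimes a$, which is immediate from the left integral property via the counit, lets one transfer Sweedler components between tensor slots. Iterating this move in combination with the antipode axiom $S(h_{(1)})h_{(2)}=\varepsilon(h)\cdot 1$ and the mirror symmetry above aligns the standard antipode identity with the desired one. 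Consistency can be sanity-checked by applying $\id\otimes\varepsilon$ to both sides: the left-hand side gives $t_{(1)}S(t_{(2)}) = \varepsilon(t)\cdot 1 = 1$, matching the right.

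The main obstacle will be the Sweedler bookkeeping in the absence of cocommutativity: one needs to carefully track which components of $\Delta^{(2)}(t)$ survive after each antipode/counit reduction, and to ensure that each step is justified purely by coassociativity, the antipode axioms, and the integral property (not by any hidden use of cocommutativity). The role of the mirror symmetry $t_{(1)}\otimes t_{(2)}\otimes t_{(3)} = S(t_{(3)})\otimes S(t_{(2)})\otimes S(t_{(1)})$, which genuinely depends on semisimplicity in characteristic~$0$ through $S(t)=t$, is the essential ingredient without which the argument would not close.
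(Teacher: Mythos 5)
Your preparatory observations are all correct: semisimplicity in characteristic $0$ gives $\varepsilon(t)\neq 0$ and unimodularity, hence $S(t)=t$ after normalizing; Larson--Radford gives $S^{2}=\id_H$; the anti-coalgebra property of $S$ applied to $S(t)=t$ gives the mirror symmetry $t_{(1)}\otimes t_{(2)}\otimes t_{(3)}=S(t_{(3)})\otimes S(t_{(2)})\otimes S(t_{(1)})$; and your reduction of the target to $S(t_{(3)})t_{(1)}\otimes t_{(2)}=1_H\otimes t$ is valid. The gap is in your final paragraph, which is where the proof would actually have to happen: ``iterating this move \dots\ aligns the standard antipode identity with the desired one'' is not a derivation, and with the tools you have listed it is unlikely to become one. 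The standard antipode identities $S(t_{(1)})t_{(2)}\otimes t_{(3)}=1_H\otimes t$ and $t_{(1)}S(t_{(2)})\otimes t_{(3)}=1_H\otimes t$ only cancel \emph{adjacent} Sweedler legs; to reach an identity in which $S$ couples legs $1$ and $3$ while leg $2$ survives, you must be able to permute the legs of $\Delta^{(2)}(t)$ cyclically. Your mirror symmetry generates only a single involution of $H^{\otimes 3}$ (it has order two because $S^{2}=\id_H$), not the cyclic group, and indeed applying it to the target merely converts it into an equally inaccessible reformulation. The integral relations you invoke ($h_{(1)}t_{(1)}\otimes h_{(2)}t_{(2)}\otimes h_{(3)}t_{(3)}=\varepsilon(h)\,t_{(1)}\otimes t_{(2)}\otimes t_{(3)}$ and $a_{(1)}t\otimes a_{(2)}=t\otimes a$) act on an \emph{external} element $h$ or $a$ and cannot legitimately be applied with that element taken to be a Sweedler leg of $t$ itself, which is what the proposed ``alignment'' would require.

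The missing ingredient is precisely the one the paper's proof rests on: the integral of a semisimple Hopf algebra in characteristic $0$ is a \emph{cocommutative element}, $t_{(1)}\otimes t_{(2)}=t_{(2)}\otimes t_{(1)}$ (cited as \cite[Exercise~7.4.7]{Danara}; equivalently, the dual integral $\lambda\in H^{*}$ is a trace). This yields full $S_3$-invariance of $\Delta^{(2)}(t)$, in particular $t_{(1)}\otimes t_{(3)}\otimes t_{(2)}=t_{(2)}\otimes t_{(1)}\otimes t_{(3)}$, whence $t_{(1)}St_{(3)}\otimes t_{(2)}=t_{(2)}St_{(1)}\otimes t_{(3)}=\varepsilon(t_{(1)})\cdot 1_H\otimes t_{(2)}=1_H\otimes t$ using $S^{2}=\id_H$. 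Note that cocommutativity of $\Delta(t)$ is strictly stronger than your mirror symmetry: given $S(t)=t$, the two differ by the nontrivial assertion $(S\otimes S)\Delta(t)=\Delta(t)$. It is a genuine theorem about semisimple Hopf algebras, not a formal consequence of coassociativity, the antipode axioms, and the integral property; you need to import it (or the trace property of $\lambda$) for the argument to close.
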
  
\begin{proof}
By~\cite[Exercise~7.4.7]{Danara},
$t_{(1)}\otimes t_{(2)}
= t_{(2)}\otimes t_{(1)}$. Applying $\Delta \otimes \id_H$,
we obtain
$$t_{(1)}\otimes t_{(2)} \otimes t_{(3)}
= t_{(2)}\otimes t_{(3)} \otimes t_{(1)}.$$
Interchanging the last two multipliers,
we get
$$
t_{(1)}\otimes t_{(3)} \otimes t_{(2)}
= t_{(2)}\otimes t_{(1)} \otimes t_{(3)}.
$$
Thus $$t_{(1)}St_{(3)}\otimes t_{(2)} = t_{(2)}St_{(1)}\otimes t_{(3)}
=\varepsilon(t_{(1)})\cdot 1_H \otimes t_{(2)}=1_H \otimes t$$
since $t_{(2)}St_{(1)}=S(t_{(1)}St_{(2)})=S(\varepsilon(t)\cdot 1_H)$.
Here we use that $S^2=\id_H$ by the Larson~--- Radford theorem (see e.g \cite[Theorem~7.4.6]{Danara}).
\end{proof}

Now we derive the $H$-invariant Wedderburn theorem
from the original one.
 
\begin{lemma}\label{LemmaHBSS}
If $H$ is a finite dimensional semisimple Hopf algebra over a field of characteristic~$0$ and $B$ is a finite dimensional
semisimple $H$-module associative algebra, then $B=B_1 \oplus B_2 \oplus \ldots 
\oplus B_q$ (direct sum of ideals) for some $H$-simple subalgebras $B_i$.
\end{lemma}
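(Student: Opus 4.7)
The plan is induction on $\dim B$. The base case is immediate when $B$ is itself $H$-simple, so I may assume there is a proper nontrivial $H$-invariant two-sided ideal $I \subsetneq B$. The goal is then to produce an $H$-invariant complementary ideal $J$ with $B = I \oplus J$, after which applying the induction hypothesis to $I$ and $J$ (each a finite dimensional semisimple $H$-module algebra in its own right) delivers the desired refinement into $H$-simple ideals.

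To find $J$, I would exploit the ordinary Wedderburn structure of $B$: there exists a central idempotent $e \in B$ with $I = eB$, although $e$ itself need not be $H$-invariant. Using semisimplicity of $H$ in characteristic~$0$ (Maschke's theorem for Hopf algebras), I pick a left integral $t \in H$ with $\varepsilon(t) = 1$ and set $\tilde e := t e \in I$. The defining relation $ht = \varepsilon(h) t$ of a left integral yields $h \tilde e = \varepsilon(h) \tilde e$ at once, so $\tilde e$ is already $H$-invariant.

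The heart of the argument is to show that $\tilde e$ is a central idempotent of $B$ satisfying $\tilde e B = I$. For this I would consider the $B$-bimodule projection $\pi \colon B \to I$ given by $\pi(b) = eb = be$ together with its integral-averaged version $\tilde \pi(b) := t_{(1)} \pi(S(t_{(2)}) b)$. Expanding $\tilde \pi$ through $h(xy) = (h_{(1)} x)(h_{(2)} y)$ and coassociativity, the choice $\pi(x) = ex$ collapses, by the counit axiom, to $\tilde \pi(b) = \tilde e \, b$, while the choice $\pi(x) = xe$ collapses, via the identity $t_{(1)} S(t_{(3)}) \otimes t_{(2)} = 1_H \otimes t$ of Lemma~\ref{LemmaIntegral}, to $\tilde \pi(b) = b \, \tilde e$. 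Since $e$ is central the two expressions must agree, forcing $\tilde e$ to be central in $B$. For $b \in I$ the $H$-stability of $I$ gives $S(t_{(2)}) b \in I$, on which $\pi$ is the identity, so $\tilde \pi(b) = (t_{(1)} S(t_{(2)})) b = \varepsilon(t) b = b$; hence $\tilde \pi$ is an idempotent $F$-linear projection with image $I$, which translates into $\tilde e^2 = \tilde e$ and $\tilde e B = I$.

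Finally, $e' := 1_B - \tilde e$ is again a central idempotent and is $H$-invariant via $h \cdot 1_B = \varepsilon(h) 1_B$, so $B = \tilde e B \oplus e' B = I \oplus J$ is a decomposition into $H$-invariant two-sided ideals and the inductive step goes through. The main obstacle, and the reason Lemma~\ref{LemmaIntegral} is established beforehand, is precisely to arrange the averaging so that centrality and the projection property drop out of a single integral identity; once that identity is in hand the remaining verifications are routine bookkeeping.
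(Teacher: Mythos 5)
Your argument is correct and follows essentially the same route as the paper: both average the Wedderburn projection onto the ideal by a left integral $t$ with $\varepsilon(t)=1$ and invoke the identity $t_{(1)}St_{(3)}\otimes t_{(2)}=1_H\otimes t$ of Lemma~\ref{LemmaIntegral} at exactly the same point to obtain the two-sided bimodule property. Your only (harmless) repackaging is to carry the averaged projection as the central idempotent $\tilde e = te$, which makes the $H$-invariance of the complement immediate from $h\tilde e=\varepsilon(h)\tilde e$ rather than requiring the separate verification that $\tilde\pi$ is an $H$-module map.
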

\begin{proof}
Suppose $I_1$ is an $H$-invariant two-sided ideal of $B$. Since $B$ is a finite dimensional semisimple algebra, then by the Wedderburn theorem
it equals the sum of simple ideals, and we can find a complementary
to $I_1$ two-sided ideal $I_2$, $B = I_1 \oplus I_2$.  Denote by $\pi$ the projection
of $B$ on $I_1$ along $I_2$. Then
$\pi(ab)=a\pi(b)=\pi(a)b$ for all $a,b \in B$.

  Since $H$ is semisimple, by~\cite[Theorem 2.2.1]{Montgomery},
there exists a left integral $t \in H$ with $\varepsilon(t)=1$.
Now we use the generalization of Maschke's trick to Hopf algebras:
let $\tilde\pi(a):=t_{(1)} \pi\bigl((St_{(2)})a\bigr)$.
First, $\im\tilde\pi \subseteq I_1$ since $I_1$ is $H$-invariant.
In addition, if $a \in I_1$, then $$\tilde\pi(a)=
t_{(1)} \pi\bigl((St_{(2)})a\bigr)=t_{(1)} (St_{(2)}) a = \varepsilon(t) a = a.$$
Thus $\tilde\pi$ is projection on $I_1$.
Moreover, for all $a,b \in B$, we have $$\tilde\pi(ab)=t_{(1)} \pi\bigl((St_{(2)})(ab)\bigr)=
t_{(1)} \pi\bigl(((St_{(2)})_{(1)}a)((St_{(2)})_{(2)}b)\bigr)
= $$ \begin{equation}
\label{EqHWedder1}
 t_{(1)} \pi\bigl( ((S({t_{(2)}}_{(2)}))a) ((S({t_{(2)}}_{(1)}))b) \bigr)
= t_{(1)} \pi\bigl( ((St_{(3)})a) ((St_{(2)})b) \bigr)
\end{equation}
since $\Delta S =  \tau (S\otimes S) \Delta$ where $\tau (u \otimes v) = v \otimes u$.
However $$ t_{(1)} \pi\Bigl(((St_{(3)})a)((St_{(2)})b)\Bigr) =
t_{(1)} \Bigl(\pi((St_{(3)})a)\Bigl((St_{(2)})b\Bigr)\Bigr) =$$\begin{equation}
\label{EqHWedder2} 
\Bigl(t_{(1)}\pi\bigl((St_{(4)})a\bigr)\Bigr)\Bigl(t_{(2)}(St_{(3)})b\Bigr)=
t_{(1)} \pi((St_{(2)})a)b=\tilde \pi(a) b\end{equation} and
$$t_{(1)} \pi\Bigl(((St_{(3)})a)((St_{(2)})b)\Bigr) = $$\begin{equation}
\label{EqHWedder3}
t_{(1)} \Bigl(((St_{(3)})a) \pi\bigl((St_{(2)})b\bigr)\Bigr) =
\bigl(t_{(1)}(St_{(4)})a\bigr)  t_{(2)} \pi((St_{(3)})b).\end{equation}
By Lemma~\ref{LemmaIntegral},
$t_{(1)}St_{(4)}\otimes t_{(2)} \otimes t_{(3)}=1_H \otimes t_{(1)} \otimes t_{(2)}$.
Hence \begin{equation}
\label{EqHWedder4}\bigl(t_{(1)}(St_{(4)})a\bigr)  t_{(2)} \pi((St_{(3)})(b))
= a\tilde \pi(b). \end{equation}
Equations~(\ref{EqHWedder1})--(\ref{EqHWedder4})
imply $\tilde \pi(ab)=a\tilde \pi(b)=\tilde \pi(a)b$ for all $a,b \in B$.
Moreover, repeating verbatim
the argument from the proof of \cite[Theorem 2.2.1]{Montgomery},
we obtain $\tilde \pi(ha)=h\tilde \pi(a)$
for all $a \in B$, $h \in H$.
Hence $\ker \tilde \pi$ is a two-sided $H$-invariant ideal of $B$.
Furthermore, $B = (\ker \tilde \pi)\oplus I_1$.
Thus we have proved the splitting property which implies the lemma.
\end{proof}

\begin{proof}[Proof of Theorem~\ref{TheoremMainHSS}.]
Since codimensions do not change upon an extension of the base field,
we may assume $F$ to be algebraically closed.
By~\cite[Theorem~3.8]{LinMontSmall}, the Jacobson radical is $H$-invariant
for any finite dimensional semisimple Hopf algebra $H$. By \cite[Corollary~2.7]{SteVanOyst}, we have
an $H$-invariant Wedderburn~--- Malcev decomposition. Together with
Lemma~\ref{LemmaHBSS} and Theorem~\ref{TheoremMainH} this yields the theorem.
\end{proof}

\section{Generalized Hopf action and $S_n$-cocharacters}\label{SectionGeneralizedHopf}

In order to embrace the case when a group acts by anti-automorphisms as well as automorphisms, we
consider the following generalized $H$-action~\cite[Section~3]{BereleHopf}.

\subsection{Definitions}\label{SubsectionGenHopfDef}

Let $H$ be an associative algebra with $1$.
We say that an associative algebra $A$ is an algebra with a \textit{generalized $H$-action}
if $A$ is endowed with a homomorphism $H \to \End_F(A)$
and for every $h \in H$ there exist $h'_i, h''_i, h'''_i, h''''_i \in H$
such that 
\begin{equation}\label{EqGeneralizedHopf}
h(ab)=\sum_i\bigl((h'_i a)(h''_i b) + (h'''_i b)(h''''_i a)\bigr) \text{ for all } a,b \in A.
\end{equation}

As in Subsection~\ref{SubsectionHopf}, we choose a basis $(\gamma_\beta)_{\beta \in \Lambda}$ in $H$ and
denote by $F \langle X | H \rangle$ 
the free associative algebra over $F$ with free formal
 generators $x_i^{\gamma_\beta}$, $\beta \in \Lambda$, $i \in \mathbb N$.
 Let $x_i^h := \sum_{\beta \in \Lambda} \alpha_\beta x_i^{\gamma_\beta}$
 for $h= \sum_{\beta \in \Lambda} \alpha_\beta \gamma_\beta$, $\alpha_\beta \in F$,
 where only finite number of $\alpha_\beta$ are nonzero.
Here $X := \lbrace x_1, x_2, x_3, \ldots \rbrace$, $x_j := x_j^1$, $1 \in H$.
 We refer to the elements
 of $F\langle X | H \rangle$ as $H$-polynomials.
Note that here we do not consider any $H$-action on $F \langle X | H \rangle$.

Let $A$ be an associative algebra with a generalized $H$-action.
Any map $\psi \colon X \to A$ has a unique homomorphic extension $\bar\psi
\colon F \langle X | H \rangle \to A$ such that $\bar\psi(x_i^h)=h\psi(x_i)$
for all $i \in \mathbb N$ and $h \in H$.
 An $H$-polynomial
 $f \in F\langle X | H \rangle$
 is an \textit{$H$-identity} of $A$ if $\bar\psi(f)=0$
for all maps $\psi \colon X \to A$. In other words, $f(x_1, x_2, \ldots, x_n)$
 is an $H$-identity of $A$
if and only if $f(a_1, a_2, \ldots, a_n)=0$ for any $a_i \in A$.
 In this case we write $f \equiv 0$.
The set $\Id^{H}(A)$ of all $H$-identities
of $A$ is an ideal of $F\langle X | H \rangle$.
Note that our definition of $F\langle X | H \rangle$
depends on the choice of the basis $(\gamma_\beta)_{\beta \in \Lambda}$ in $H$.
However such algebras can be identified in the natural way,
and $\Id^{H}(A)$ is the same.

As in Subsection~\ref{SubsectionHopf}, we denote by $P^H_n$ the space of all multilinear $H$-polynomials
in $x_1, \ldots, x_n$, $n\in\mathbb N$, i.e.
$$P^{H}_n = \langle x^{h_1}_{\sigma(1)}
x^{h_2}_{\sigma(2)}\ldots x^{h_n}_{\sigma(n)}
\mid h_i \in H, \sigma\in S_n \rangle_F \subset F \langle X | H \rangle.$$
Then the number $c^H_n(A):=\dim\left(\frac{P^H_n}{P^H_n \cap \Id^H(A)}\right)$
is called the $n$th \textit{codimension of polynomial $H$-identities}
or the $n$th \textit{$H$-codimension} of $A$.

\begin{example}\label{ExampleIdFG*}
Let $A$ be an associative algebra with an action 
of a group $G$ by automorphisms and anti-automorphisms. Then there
is a generalized $H$-action on $A$ where $H=FG$.
Hence we may identify $F\langle X | H \rangle=F\langle X | G \rangle$.
Furthermore, $\Id^H(A)=\Id^G(A)$ and $c^H_n(A)=c^G_n(A)$.
\end{example}

\subsection{Some bounds for codimensions}\label{SubsectionGenHopfBounds}

As in the case of ordinary codimensions,
 we have the following upper bound:
 
 \begin{lemma}\label{LemmaCodimDim}
Let $A$ be a finite dimensional algebra with a generalized $H$-action
over any field~$F$ and let~$H$ be any associative algebra with $1$. Then
$c_n^H(A) \leqslant (\dim A)^{n+1}$ for all  $n \in \mathbb N$.
\end{lemma}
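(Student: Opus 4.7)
The plan is to bound $c_n^H(A)$ by embedding the quotient $P_n^H/(P_n^H \cap \Id^H(A))$ into a finite-dimensional space of multilinear maps of obvious dimension $(\dim A)^{n+1}$. No structural properties of the generalized $H$-action are needed beyond the fact that each $h \in H$ acts as an $F$-linear endomorphism of $A$, so I expect no real obstacle.

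First I would define the evaluation map
\[
\Phi \colon P_n^H \longrightarrow \Hom_F(A^{\otimes n}, A), \qquad \Phi(f)(a_1 \otimes \cdots \otimes a_n) := f(a_1, \ldots, a_n).
\]
To see that $\Phi(f)$ really is multilinear for every $f \in P_n^H$, it suffices to check it on a spanning monomial $x^{h_1}_{\sigma(1)} x^{h_2}_{\sigma(2)} \cdots x^{h_n}_{\sigma(n)}$: after substitution, the $k$-th factor becomes $h_k\, a_{\sigma(k)}$, which is $F$-linear in $a_{\sigma(k)}$ because $h_k$ acts as an $F$-linear endomorphism of $A$; multilinearity of the product follows.

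Next I would identify the kernel. By the very definition of an $H$-identity in Subsection~\ref{SubsectionGenHopfDef}, a multilinear polynomial $f(x_1, \ldots, x_n) \in P_n^H$ belongs to $\Id^H(A)$ iff $f(a_1, \ldots, a_n) = 0$ for all $a_i \in A$, i.e.\ iff $\Phi(f) = 0$. Hence $\ker \Phi = P_n^H \cap \Id^H(A)$.

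Finally, the induced injection $P_n^H/(P_n^H \cap \Id^H(A)) \hookrightarrow \Hom_F(A^{\otimes n}, A)$ gives
\[
c_n^H(A) = \dim \frac{P_n^H}{P_n^H \cap \Id^H(A)} \leqslant \dim \Hom_F(A^{\otimes n}, A) = (\dim A)^{n} \cdot \dim A = (\dim A)^{n+1},
\]
which is the desired bound.
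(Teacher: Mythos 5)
Your proof is correct and follows exactly the paper's argument: evaluate multilinear $H$-polynomials as $n$-linear maps into $\Hom_F(A^{\otimes n}, A)$, observe the kernel is $P_n^H \cap \Id^H(A)$, and bound the codimension by $\dim \Hom_F(A^{\otimes n}, A) = (\dim A)^{n+1}$. The extra verification of multilinearity on spanning monomials is a harmless elaboration of what the paper leaves implicit.
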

\begin{proof}
Consider $H$-polynomials as $n$-linear maps from $A$ to $A$.
Then we have a natural map $P^{H}_n \to \Hom_{F}(A^{{}\otimes n}; A)$
with the kernel $P^{H}_n \cap \Id^H(A)$
that leads to the embedding $$\frac{P^{H}_n}{P^{H}_n \cap \Id^H(A)}
\hookrightarrow \Hom_{F}(A^{{}\otimes n}; A).$$
Thus $$c^H_n(A)=\dim \left(\frac{P^{H}_n}{P^{H}_n \cap \Id^H(A)}\right)
\leqslant \dim \Hom_{F}(A^{{}\otimes n}; A)=(\dim A)^{n+1}.$$
\end{proof}
\begin{corollary}
Let $A$ be a finite dimensional algebra over any field $F$ with a $G$-action
by automorphisms and anti-automorphisms
 and $G$ be any group. Then
$c_n^G(A) \leqslant (\dim A)^{n+1}$ for all  $n \in \mathbb N$.
\end{corollary}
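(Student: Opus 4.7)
The plan is to reduce this corollary directly to Lemma~\ref{LemmaCodimDim} by translating a $G$-action by automorphisms and anti-automorphisms into a generalized $H$-action with $H = FG$. The key observation, already recorded in Example~\ref{ExampleIdFG*}, is that the group algebra $H := FG$ carries a generalized $H$-action on $A$ extending the $G$-action: for $g \in G$ acting as an automorphism, the identity $g(ab) = (ga)(gb)$ matches~(\ref{EqGeneralizedHopf}) with the single pair $(h'_1, h''_1) = (g,g)$ and trivial third/fourth terms; for $g$ acting as an anti-automorphism, $g(ab) = (gb)(ga)$ matches~(\ref{EqGeneralizedHopf}) with $(h'''_1, h''''_1) = (g,g)$ and trivial first/second terms. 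Extending by $F$-linearity, every element of $FG$ satisfies a relation of the form~(\ref{EqGeneralizedHopf}), so the axioms of a generalized $H$-action hold.

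Next, I would invoke the identification from Example~\ref{ExampleIdFG*} of $F\langle X | H \rangle$ with $F\langle X | G \rangle$, under which $\Id^H(A) = \Id^G(A)$ and the spaces $P^H_n$ and $P^G_n$ coincide. In particular $c^H_n(A) = c^G_n(A)$, so the bound on $c^H_n(A)$ will transfer immediately.

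Finally, I would apply Lemma~\ref{LemmaCodimDim} to the generalized $H$-action just constructed: since $A$ is finite dimensional and $FG$ is an associative algebra with $1$ (one should note that we do \emph{not} require $H$ to be finite dimensional, only $A$), the lemma gives $c^H_n(A) \leqslant (\dim A)^{n+1}$, and combining with $c^H_n(A) = c^G_n(A)$ yields the desired inequality.

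There is really no obstacle here; the only point to be careful about is that the group $G$ is permitted to be arbitrary (in particular infinite), and so $H = FG$ may be infinite dimensional. This is harmless because Lemma~\ref{LemmaCodimDim} is stated for any associative algebra $H$ with $1$ without any finiteness hypothesis on $H$, and the embedding $P^H_n / (P^H_n \cap \Id^H(A)) \hookrightarrow \Hom_F(A^{\otimes n}, A)$ used there applies verbatim.
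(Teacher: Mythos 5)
Your proposal is correct and is essentially the paper's own (implicit) argument: the corollary is stated immediately after Lemma~\ref{LemmaCodimDim} precisely because Example~\ref{ExampleIdFG*} turns the $G$-action into a generalized $FG$-action with $c^{FG}_n(A)=c^G_n(A)$, after which the lemma applies. Your added remark that $FG$ may be infinite dimensional but that Lemma~\ref{LemmaCodimDim} imposes no finiteness on $H$ is a correct and worthwhile observation.
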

\begin{corollary}
Let $A$ be a finite dimensional algebra over any field $F$ graded by a finite 
group~$G$. Then
$c^{\mathrm{gr}}_n(A) \leqslant (\dim A)^{n+1}$ for all  $n \in \mathbb N$.
\end{corollary}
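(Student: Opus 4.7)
The plan is to reduce this corollary to the previous one (or directly to Lemma~\ref{LemmaCodimDim}) via the duality between gradings and Hopf actions established in Subsection~\ref{SubsectionDuality}. Since $G$ is finite, we may pass to the dual Hopf algebra $H := (FG)^*$, which is finite dimensional, and view the $G$-graded algebra $A$ as an $H$-module algebra in the canonical way.

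First I would invoke Lemma~\ref{LemmaGradAction}, which identifies the graded codimensions of $A$ with its $H$-codimensions: $c^{\mathrm{gr}}_n(A) = c^H_n(A)$ for all $n \in \mathbb N$. Next, since any genuine Hopf action is in particular a generalized $H$-action (take $h'_i = h_{(1)}$, $h''_i = h_{(2)}$, and no terms in the second sum of \eqref{EqGeneralizedHopf}), Lemma~\ref{LemmaCodimDim} applies to give $c^H_n(A) \leqslant (\dim A)^{n+1}$. Combining the two inequalities yields the claim.

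There is no real obstacle here; the content of the statement is entirely absorbed into the already-established duality (Lemma~\ref{LemmaGradAction}) and the dimension bound from Lemma~\ref{LemmaCodimDim}. The only minor point to verify is that the $(FG)^*$-action obtained from the grading is indeed encompassed by the generalized Hopf action framework of Subsection~\ref{SubsectionGenHopfDef}, which is immediate from the comodule/module algebra axiom $h(ab) = (h_{(1)} a)(h_{(2)} b)$.
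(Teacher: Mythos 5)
Your proof is correct and follows exactly the paper's route: the paper's one-line proof is ``We apply Lemma~\ref{LemmaGradAction}'', i.e.\ it identifies $c^{\mathrm{gr}}_n(A)$ with $c^H_n(A)$ for $H=(FG)^*$ and then uses the bound of Lemma~\ref{LemmaCodimDim}. Your additional remark that a genuine Hopf action is a special case of a generalized $H$-action is a correct (and implicitly assumed) detail, not a deviation.
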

\begin{proof} We apply Lemma~\ref{LemmaGradAction}. \end{proof}

Denote by $P_n$ the space of ordinary multilinear polynomials
in the noncommuting variables $x_1, \ldots, x_n$ and by $\Id(A)$
the set of ordinary polynomial identities of $A$.
In other words, $P_n = P^{\lbrace e \rbrace}_n$ and $\Id(A)=\Id^{\lbrace e \rbrace}(A)$
where $\lbrace e \rbrace$ is the trivial group acting on $A$. Then $c_n(A)
:=\dim\frac{P_n}{P_n \cap \Id(A)}$.

 This lemma is an analog of \cite[Lemmas 10.1.2 and 10.1.3]{ZaiGia}.

\begin{lemma}\label{LemmaOrdinaryAndHopf}
Let $A$ be an associative algebra with a generalized $H$-action
over any field~$F$ and let~$H$ be an associative algebra with $1$. Then
$$c_n(A) \leqslant c^{H}_n(A)
  \leqslant (\dim H)^n c_n(A) \text{ for all } n \in \mathbb N.$$
\end{lemma}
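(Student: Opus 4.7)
The plan is to prove the two inequalities separately, both by linear-algebraic comparison of the relevant quotient spaces.

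For the lower bound $c_n(A)\leqslant c_n^H(A)$, I would use that $1\in H$ acts as the identity on $A$, so there is a canonical inclusion $\iota\colon P_n\hookrightarrow P_n^H$ sending $x_j$ to $x_j^1$. The key observation is that $P_n\cap\Id^H(A)=P_n\cap\Id(A)$: since evaluating an element of $P_n$ (viewed inside $P_n^H$) at $a_1,\dots,a_n\in A$ only ever produces the products $a_{\sigma(1)}\cdots a_{\sigma(n)}$, the $H$-action is invisible and the two notions of identity coincide on $P_n$. Hence $\iota$ descends to an injection of quotients and gives $c_n(A)\leqslant c_n^H(A)$.

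For the upper bound $c_n^H(A)\leqslant (\dim H)^n c_n(A)$ the main idea is to construct a surjective linear map
\[
\Phi\colon P_n\otimes H^{\otimes n}\longrightarrow P_n^H
\]
and show it descends to the codimension quotients. Fixing a basis $(\gamma_\beta)_{\beta\in\Lambda}$ of $H$, I would define $\Phi$ on basis elements by
\[
\Phi\bigl((x_{\sigma(1)}x_{\sigma(2)}\cdots x_{\sigma(n)})\otimes(h_1\otimes\cdots\otimes h_n)\bigr)
=x_{\sigma(1)}^{h_{\sigma(1)}}x_{\sigma(2)}^{h_{\sigma(2)}}\cdots x_{\sigma(n)}^{h_{\sigma(n)}},
\]
so that the Hopf label $h_j$ travels with the variable $x_j$ rather than being attached to a position. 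Surjectivity of $\Phi$ is immediate since every basis monomial of $P_n^H$ is obtained in this way by choosing $\sigma$ and the $h_j$'s appropriately.

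The crucial step, and the point where the specific pairing is chosen, is the claim that $\Phi$ sends $\Id(A)\otimes H^{\otimes n}$ into $\Id^H(A)$. If $p=\sum_\sigma\alpha_\sigma x_{\sigma(1)}\cdots x_{\sigma(n)}\in\Id(A)$, then for any $a_1,\dots,a_n\in A$ and $h_1,\dots,h_n\in H$,
\[
\Phi(p\otimes h_1\otimes\cdots\otimes h_n)(a_1,\dots,a_n)
=\sum_\sigma\alpha_\sigma(h_{\sigma(1)}a_{\sigma(1)})\cdots(h_{\sigma(n)}a_{\sigma(n)})
=p(h_1a_1,\dots,h_na_n),
\]
which is zero because $h_ja_j\in A$ and $p$ is an ordinary identity of $A$. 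Thus $\Phi$ induces a surjection
\[
\bigl(P_n/(P_n\cap\Id(A))\bigr)\otimes H^{\otimes n}\twoheadrightarrow P_n^H/(P_n^H\cap\Id^H(A)),
\]
and taking dimensions yields $c_n^H(A)\leqslant(\dim H)^n c_n(A)$.

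The subtle point, which is really the only thing to get right, is the choice of how the tensor slots of $H^{\otimes n}$ are paired with the variables: attaching $h_j$ to the variable $x_j$ (rather than to the $j$-th position of the monomial) is what converts substitution $x_j\mapsto a_j$ in $\Phi(p\otimes h_1\otimes\cdots\otimes h_n)$ into substitution $x_j\mapsto h_ja_j$ in $p$. Note that this argument uses only that each $h\in H$ acts as a linear endomorphism of $A$, so neither the Hopf structure of $H$ nor the generalized compatibility \eqref{EqGeneralizedHopf} is invoked.
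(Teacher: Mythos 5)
Your proof is correct and follows essentially the same route as the paper: the lower bound comes from the inclusion $P_n\subseteq P_n^H$ (the paper phrases it via the common embedding into $\Hom_F(A^{\otimes n};A)$), and your surjection $\Phi\colon P_n\otimes H^{\otimes n}\to P_n^H$ is just a repackaging of the paper's spanning set $f_i(x_1^{\gamma_{j_1}},\ldots,x_n^{\gamma_{j_n}})$, resting on the same key observation that attaching the label $h_j$ to the variable $x_j$ turns the substitution $x_j\mapsto a_j$ into $x_j\mapsto h_ja_j$ and hence preserves identities. No gaps.
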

\begin{remark}
If $A$ is an algebra graded by a finite group $G$, we apply Lemma~\ref{LemmaGradAction} and obtain the well known bounds for graded codimensions:  
 $c_n(A) \leqslant c^{\mathrm{gr}}_n(A)
  \leqslant |G|^n c_n(A)$ for all  $n \in \mathbb N$.
\end{remark}
\begin{proof}[Proof of Lemma~\ref{LemmaOrdinaryAndHopf}]
As in Lemma~\ref{LemmaCodimDim}, we consider polynomials as $n$-linear maps from $A$ to $A$ and identify  $\frac{P_n}{P_n \cap \Id(A)}$ and $\frac{P^{H}_n}{P^{H}_n \cap \Id^H(A)}$
with the corresponding subspaces in $\Hom_{F}(A^{{}\otimes n}; A)$.
Then
 $$\frac{P_n}{P_n \cap \Id(A)} \subseteq \frac{P^{H}_n}{P^{H}_n \cap \Id^H(A)}
\subseteq \Hom_{F}(A^{{}\otimes n}; A)$$
and the lower bound follows.

Choose such $f_1, \ldots, f_t \in P_n$ that their images form a basis in 
$\frac{P_n}{P_n \cap \Id(A)}$.  Then for any monomial $x_{\sigma(1)}x_{\sigma(2)}
\ldots x_{\sigma(n)}$, $\sigma \in S_n$, there exist $\alpha_{i,\sigma} \in F$ such that
\begin{equation}\label{EqPnModuloId}x_{\sigma(1)}x_{\sigma(2)}
\ldots x_{\sigma(n)} - \sum_{i=1}^t \alpha_{i,\sigma} f_i(x_1, \ldots, x_n) \in \Id(A).
\end{equation}

Let $(\gamma_j)_{j=1}^m$ be a basis in $H$. Then $x^{\gamma_{i_1}}_{\sigma(1)}x^{\gamma_{i_2}}_{\sigma(2)}
\ldots x^{\gamma_{i_n}}_{\sigma(n)}$, $\sigma \in S_n$, $1 \leqslant i_j \leqslant m$,
form a basis in $P^H_n$. Note that (\ref{EqPnModuloId}) implies
$$x^{\gamma_{i_1}}_{\sigma(1)}x^{\gamma_{i_2}}_{\sigma(2)}
\ldots x^{\gamma_{i_n}}_{\sigma(n)} - \sum_{i=1}^t \alpha_{i,\sigma} f_i(x^{\gamma_{i_1}}_1,
 \ldots, x^{\gamma_{i_n}}_n) \in \Id^H(A).$$
Hence any $H$-polynomial from $P^H_n$ can be expressed modulo $\Id^H(A)$
as a linear combination of $H$-polynomials $f_i(x^{\gamma_{i_1}}_1,
 \ldots, x^{\gamma_{i_n}}_n)$. The number of such polynomials equals $m^n t = (\dim H)^n c_n(A)$
 that finishes the proof.
\end{proof}

\subsection{Generalized Hopf PI-exponent}\label{SubsectionGenHopfPIexp}

Let $A$ be an algebra with a generalized $H$-action.
We call $\PIexp^H(A):=\lim\limits_{n\to\infty}
 \sqrt[n]{c^H_n(A)}$ (if it exists) the \textit{generalized Hopf PI-exponent} of $A$.

Theorem~\ref{TheoremMainH}
  is a particular case of 
\begin{theorem}\label{TheoremMain}
Let $A$ be a finite dimensional non-nilpotent associative algebra
with a generalized $H$-action
over an algebraically closed field $F$ of characteristic $0$.  Here $H$ is a finite dimensional
associative algebra with $1$ acting on $A$ in such a way that the Jacobson radical $J:=J(A)$
is $H$-invariant and $A = B \oplus J$
(direct sum of $H$-submodules) where $B=B_1 \oplus \ldots \oplus B_q$ (direct sum of $H$-invariant ideals),
$B_i$ are $H$-simple semisimple algebras.
 Then there exist constants $C_1, C_2 > 0$, $r_1, r_2 \in \mathbb R$, $d \in \mathbb N$ such that $C_1 n^{r_1} d^n \leqslant c^{H}_n(A) \leqslant C_2 n^{r_2} d^n$ for all $n \in \mathbb N$.
\end{theorem}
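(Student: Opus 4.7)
The plan is to establish matching upper and lower bounds of the form $C n^r d^n$ for the explicit integer invariant
$$d := \max \bigl\{\dim(B_{i_1} + \cdots + B_{i_s}) : \exists\, r_1, \ldots, r_{s-1} \in J \text{ with } B_{i_1} r_1 B_{i_2} r_2 \cdots r_{s-1} B_{i_s} \neq 0 \bigr\},$$
the maximum being over all tuples $1 \leqslant i_1, \ldots, i_s \leqslant q$. Since $J$ is nilpotent, say $J^p = 0$, the length $s$ is bounded by $p$ and $d$ is finite; non-nilpotency of $A$ forces $d \geqslant 1$. This $d$ will turn out to be the generalized Hopf PI-exponent of $A$.

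For the upper bound I would use the left $S_n$-action on $P^H_n$ which permutes variable indices and preserves $P^H_n \cap \Id^H(A)$; in characteristic zero the quotient decomposes as $\sum_{\lambda \vdash n} m_\lambda \chi^\lambda$. Two ingredients are required: (i) a polynomial bound $m_\lambda \leqslant n^{c}$ uniform in $\lambda$, proved by a Regev-type argument identifying $m_\lambda$ with the dimension of a space of multilinear evaluations on a fixed finite-dimensional algebra; and (ii) the vanishing $m_\lambda = 0$ whenever the Young diagram of $\lambda$ has more than $d$ rows of length $\geqslant p$. Assertion (ii) is the essential alternation step: a Young symmetrizer indexed by such a tableau yields an $H$-polynomial that alternates over $d+1$ disjoint sets of $p$ variables each. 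Evaluating at elements of $A = B \oplus J$ and applying \eqref{EqGeneralizedHopf} to distribute the $H$-actions, any nonzero contribution must place at least $p$ elements of $J$ into a single segment (by the very maximality defining $d$, one cannot accommodate $d+1$ semisimple factors alternating over), forcing zero by $J^p = 0$. Summing $f^\lambda$ over $\lambda$ fitting in the resulting fat hook of height $d$ and applying the hook-length formula then gives $c_n^H(A) \leqslant C_2 n^{r_2} d^n$.

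For the lower bound I would fix a sequence $(i_1, \ldots, i_s)$ and elements $r_j \in J$ achieving $d$, put $B' := B_{i_1} + \cdots + B_{i_s}$ with $\dim B' = d$, and build multilinear non-identities of the form
$$f_n = \Alt_{X_1}\Alt_{X_2} \cdots \Alt_{X_k}\bigl( w_0\, y_1\, w_1\, y_2 \cdots y_{s-1}\, w_s \bigr),$$
where $n = kd + (s-1)$, the $X_j$'s are disjoint $d$-element sets of variables distributed among the monomials $w_i$, and $y_1, \ldots, y_{s-1}$ are separate variables carrying appropriate $H$-labels. Substituting $y_j \mapsto r_j$ and letting each $X_j$ range over an $F$-basis of $B'$, the alternations extract a determinant of structure constants that is nonzero; multiplying through by $B_{i_1} r_1 \cdots r_{s-1} B_{i_s} \neq 0$ shows $f_n \notin \Id^H(A)$. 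Standard Young-diagram estimates applied to the $S_n$-orbit of $f_n$ yield $c_n^H(A) \geqslant C_1 n^{r_1} d^n$.

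The main obstacle is ensuring that the alternation in the lower bound genuinely produces a nonzero value under a generalized $H$-action. In the ordinary case one simply inserts matrix units; here $H$ is only an associative algebra satisfying \eqref{EqGeneralizedHopf}, so each $h$ acts by a mix of ``forward'' and ``opposite'' multiplicative pieces which could collapse the alternation. The remedy is to exploit $H$-simplicity of each $B_{i_k}$: a Wedderburn-style / density argument shows the image of $H$ in $\End_F(B_{i_k})$ contains enough operators (in the group-plus-anti-automorphism case, including transposition-like maps) that one can choose the labels $\gamma_{\beta}$ on the alternated variables so that the signed sum does not vanish, and the generalized comultiplication identity is handled term by term. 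Making this precise is exactly the purpose of the technical sections \ref{SectionAlt}--\ref{SectionLower}, and combining those results with the $S_n$-decomposition above will complete the proof.
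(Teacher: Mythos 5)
Your outline follows the paper's proof essentially step for step: the same invariant $d$ (formula~(\ref{EqdofA})), the same upper-bound strategy (vanishing of multiplicities outside a small region of partitions plus a polynomial bound on the total multiplicity, the latter being Berele's theorem~\cite{BereleHopf} rather than something to reprove), and the same lower-bound strategy (multialternating non-identities glued along a chain $B_{i_1}JB_{i_2}J\cdots JB_{i_r}\ne 0$, then the hook formula). One step, however, fails as stated. Your vanishing claim (ii) --- $m_\lambda=0$ whenever $D_\lambda$ has more than $d$ rows of length $\geqslant p$ --- is too weak to yield $C_2n^{r_2}d^n$: the partitions it leaves alive are exactly those with $\lambda_{d+1}\leqslant p-1$, i.e.\ the full fat hook with arm $d$ and leg $p-1$ (with no bound on the number of short rows), and the sum of $\dim M(\lambda)$ over that hook grows like $(d+p-1)^n$, not $d^n$. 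What is needed, and what Lemma~\ref{LemmaUpper} proves, is the stronger statement that $m(A,H,\lambda)=0$ unless $\sum_{i=d+1}^{s}\lambda_i\leqslant p-1$, i.e.\ at most $p-1$ boxes in total lie below row $d$; combined with the column bound $\lambda_{\dim A+1}=0$ (which you omit but which comes from the same alternation argument), \cite[Lemmas 6.2.4, 6.2.5]{ZaiGia} then give $\sum\dim M(\lambda)\leqslant C_3n^{r_3}d^n$. The good news is that your own mechanism proves the stronger statement once you run it on columns: $b_{T_\lambda}$ alternates the variables of each \emph{column}, each column absorbs at most $d$ elements of the relevant semisimple part, so a nonvanishing evaluation forces at least $\sum_{i>d}\lambda_i$ substitutions from $J$ --- you just need to count boxes below row $d$ rather than rows of length $\geqslant p$.

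On the lower bound, you correctly identify the crux (a multialternating non-identity on an $H$-simple algebra under a merely generalized $H$-action) and the right remedy (density plus $H$-simplicity), but ``the alternations extract a determinant of structure constants that is nonzero'' is not the mechanism and is not automatic. The paper's route is: density gives $\End_F(B_0)=\langle\varphi(a)\psi(b)\rho(h)\rangle_F$, Regev's central polynomial for $M_\ell(F)\cong\End_F(B_0)$ produces one pair of alternating sets of size $\ell=\dim B_0$ evaluating to a nonzero scalar operator, and the determinant that actually appears is $\det\bigl(\tr(\varphi(a_i)\varphi(a_j))\bigr)$, whose nonvanishing (Lemma~\ref{LemmaForm}) requires semisimplicity of $B_0$ and not just $H$-simplicity --- this is precisely the hypothesis that fails for Sweedler's algebra in Subsection~\ref{SubsectionSweedler}. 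So your plan is the paper's plan, but the two places where it says ``nonzero by a determinant argument'' and ``sum over the fat hook of height $d$'' are exactly the places where the details must be supplied more carefully than your sketch suggests.
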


Theorem~\ref{TheoremMain} is proved in Sections~\ref{SectionUpper}--\ref{SectionLower}.

Let $$ d(A):= \max(\dim(
B_{i_1} \oplus B_{i_2} \oplus \ldots \oplus
B_{i_r}) \mid B_{i_1}J B_{i_2}J
\ldots JB_{i_r}\ne 0,$$ \begin{equation}\label{EqdofA} 1 \leqslant i_k \leqslant q,
  1 \leqslant k \leqslant r;\
0 \leqslant r \leqslant q )
.\end{equation}

We claim that $\PIexp^{H}(A) = d(A)$ and prove
 Theorem~\ref{TheoremMain} for $d=d(A)$.

Note that since $J$ is an ideal of $A$,
$$d(A)= \max(\dim(
B_{i_1} + B_{i_2} + \ldots +
B_{i_r}) \mid B_{i_1}J B_{i_2}J
\ldots JB_{i_r}\ne 0,$$$$ 1 \leqslant i_k \leqslant q,
  1 \leqslant k \leqslant r;\ r \in \mathbb Z_+)
.$$
 
 \subsection{Proof of Theorem~\ref{TheoremMainG}}\label{SubsectionProofofTheoremMainG}

In order to deduce Theorem~\ref{TheoremMainG} from
Theorem~\ref{TheoremMain}, we need the following generalization of the Wedderburn theorem
obtained by Maschke's argument.
\begin{lemma}\label{LemmaWedderburnG}
Let $B$ be a finite dimensional semisimple algebra over a 
field $F$ of characteristic $0$ with $G$-action by automorphisms and anti-automorphisms.
Then $B=B_1 \oplus B_2 \oplus \ldots \oplus B_q$ (direct sum of ideals)
where $B_i$ are $G$-simple algebras.
\end{lemma}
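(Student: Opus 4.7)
The plan is to prove the lemma by adapting Maschke's averaging argument to accommodate anti-automorphisms. The strategy is to show that every $G$-invariant two-sided ideal $I_1 \subseteq B$ admits a $G$-invariant two-sided ideal complement in $B$; once this is established, picking a minimal nonzero $G$-invariant ideal $B_1$ (which is automatically $G$-simple, since any $G$-invariant ideal of $B_1$ is also a $G$-invariant ideal of $B$ by the orthogonality of ideal summands) and iterating the argument on the $G$-invariant complement yields the desired decomposition, the iteration terminating because $\dim B < \infty$.

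To produce the complement of a $G$-invariant ideal $I_1$, I would start from the ordinary Wedderburn decomposition $B = I_1 \oplus I_2$ with $I_2$ some two-sided ideal complement (not necessarily $G$-invariant), and let $\pi \colon B \to I_1$ be the corresponding projection. Because $I_1$ and $I_2$ are orthogonal two-sided ideals, $\pi$ satisfies the bimodule identity $\pi(ab) = \pi(a) b = a \pi(b)$ for all $a,b \in B$. Since $G$ is finite and $\mathrm{char}\, F = 0$, I can form the averaged projection
$$\tilde\pi(a) := \frac{1}{|G|} \sum_{g \in G} g\bigl(\pi(g^{-1}(a))\bigr).$$
$G$-invariance of $I_1$ gives $\im \tilde\pi \subseteq I_1$ and $\tilde\pi|_{I_1} = \id_{I_1}$, so $\tilde\pi$ is a projection onto $I_1$. $G$-equivariance $\tilde\pi \circ h = h \circ \tilde\pi$ for every $h \in G$ then follows by the standard change-of-summation-variable $g \mapsto hg$, which works uniformly in $h$ because $\varphi(hg') = \varphi(h)\varphi(g')$ as maps $B \to B$ regardless of whether $h$ is an automorphism or an anti-automorphism.

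The one delicate point, and the genuine obstacle, is verifying $\tilde\pi(ab) = a \tilde\pi(b) = \tilde\pi(a) b$, because an anti-automorphism $g \in G \setminus G_0$ reverses products: $g^{-1}(ab) = g^{-1}(b) g^{-1}(a)$. The trick is to exploit both halves of the bimodule identity for $\pi$. For $g \in G_0$ one gets
$$g\bigl(\pi(g^{-1}(ab))\bigr) = g\bigl(\pi(g^{-1}(a))\cdot g^{-1}(b)\bigr) = g\bigl(\pi(g^{-1}(a))\bigr)\cdot b,$$
and the symmetric formula $a\cdot g(\pi(g^{-1}(b)))$ comes from the other identity $\pi(cd)=c\pi(d)$. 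For $g \in G \setminus G_0$ the factors inside $\pi$ are swapped, but applying $g$ afterwards swaps them back; using the opposite identity for $\pi$ produces exactly the same two expressions with $a$ outside on the left and $b$ outside on the right. Summing over $g$ yields $\tilde\pi(ab) = a\tilde\pi(b) = \tilde\pi(a)b$.

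From these properties $\ker \tilde\pi$ is a $G$-invariant two-sided ideal of $B$ with $B = I_1 \oplus \ker \tilde\pi$. Since a two-sided ideal of a finite dimensional semisimple algebra is itself semisimple, the inductive hypothesis applied to $\ker \tilde\pi$, which has strictly smaller dimension than $B$, completes the proof.
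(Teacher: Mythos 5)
Your proof is correct and follows essentially the same route as the paper: take an ordinary Wedderburn complement, average the projection over $G$, and verify that the averaged map is still a bimodule projection so that its kernel is a $G$-invariant complementary ideal. In fact you spell out the one delicate step -- that for an anti-automorphism $g$ the reversal of factors inside $\pi$ is undone by applying $g$ afterwards, using the other half of the identity $\pi(ab)=a\pi(b)=\pi(a)b$ -- which the paper's proof asserts without comment.
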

\begin{proof}
The property of complete reducibility in a finite dimensional case is equivalent
to the splitting property. Here we use this idea.

If $I_1$ is a $G$-invariant ideal, by the original Wedderburn theorem,
there exists an ideal $I_2$ such that $I_1 \oplus I_2 = B$.
Consider the projection $\pi \colon B \to I_1$ along $I_2$.
Then $\pi(ab)=a\pi(b)=\pi(a)b$ for all $a,b \in B$.
Let $\tilde\pi(a) := \frac{1}{|G|}\sum_{g \in G} g \tilde\pi(g^{-1}a)$ for all $g \in G$.
Note that $\pi(ab)=a\pi(b)=\pi(a)b$ and $\pi(ga)=g\pi(a)$ for all $a,b \in B$
and $g \in G$. Hence $\ker \tilde\pi$ is a two-sided $G$-invariant ideal of $B$.
 Moreover $\im \tilde\pi = I_1$ and $\tilde\pi\Bigl|_{I_1}=\id_{I_1}$. Thus $B = I_1 \oplus \ker \tilde\pi$,
and the splitting property is proved. Hence $B=B_1 \oplus B_2 \oplus \ldots \oplus B_q$ (direct sum of ideals)
for some $G$-simple $B_i$.
\end{proof}

\begin{proof}[Proof of Theorem~\ref{TheoremMainG}]
Since codimensions do not change upon an extension of the base field,
we may assume $F$ to be algebraically closed.
Moreover, $J:=J(A)$ is $G$-invariant since the image of a nilpotent ideal is nilpotent.
By $G$-invariant Wedderburn~--- Malcev theorem~\cite[Theorem~1, Remark~1]{Taft},
$A=B\oplus J$ where $B$ is a $G$-invariant maximal semisimple subalgebra.
In Lemma~\ref{LemmaWedderburnG} we have proved the $G$-invariant Wedderburn theorem.
Now we use Example~\ref{ExampleIdFG*} and Theorem~\ref{TheoremMain}.
\end{proof}

\subsection{$S_n$-cocharacters}\label{SubsectionSnCocharacters}

One of the main tools in the investigation of polynomial
identities is provided by the representation theory of symmetric groups.
 The symmetric group $S_n$  acts
 on the space $\frac {P^H_n}{P^H_{n}
  \cap \Id^H(A)}$
  by permuting the variables.
  Irreducible $FS_n$-modules are described by partitions
  $\lambda=(\lambda_1, \ldots, \lambda_s)\vdash n$ and their
  Young diagrams $D_\lambda$.
   The character $\chi^H_n(A)$ of the
  $FS_n$-module $\frac {P^H_n}{P^H_n
   \cap \Id^H(A)}$ is
   called the $n$th
  \textit{cocharacter} of polynomial $H$-identities of $A$.
  We can rewrite it as
  a sum $$\chi^H_n(A)=\sum_{\lambda \vdash n}
   m(A, H, \lambda)\chi(\lambda)$$ of
  irreducible characters $\chi(\lambda)$.
Let  $e_{T_{\lambda}}=a_{T_{\lambda}} b_{T_{\lambda}}$
and
$e^{*}_{T_{\lambda}}=b_{T_{\lambda}} a_{T_{\lambda}}$
where
$a_{T_{\lambda}} = \sum_{\pi \in R_{T_\lambda}} \pi$
and
$b_{T_{\lambda}} = \sum_{\sigma \in C_{T_\lambda}}
 (\sign \sigma) \sigma$,
be Young symmetrizers corresponding to a Young tableau~$T_\lambda$.
Then $M(\lambda) = FS e_{T_\lambda} \cong FS e^{*}_{T_\lambda}$
is an irreducible $FS_n$-module corresponding to
 a partition~$\lambda \vdash n$.
  We refer the reader to~\cite{Bahturin, DrenKurs, ZaiGia}
   for an account
  of $S_n$-representations and their applications to polynomial
  identities.

\medskip

In Section~\ref{SectionUpper}
we prove that if $m(A, H, \lambda) \ne 0$, then
the corresponding Young diagram $D_\lambda$
has at most $d$ long rows. This implies the upper bound.

In Section~\ref{SectionAlt} we
consider $H$-simple algebras $B_0$.
For arbitrary $k\in\mathbb N$,
we construct a $H$-polynomial that is
alternating in $2k$ sets,
 each consisting of $\dim B_0$ variables. This polynomial
 is not an identity of $B_0$.  
 In Section~\ref{SectionLower} we
 glue the alternating polynomials, corresponding to $B_{i_k}$
 from the definition of $d(A)$.
 This allows us to find $\lambda \vdash n$
 with $m(A, H, \lambda)\ne 0$ such that $\dim M(\lambda)$
 has the desired asymptotic behavior, and the lower bound is proved.

\section{Upper bound}\label{SectionUpper}

In
Sections~\ref{SectionUpper}--\ref{SectionLower} we consider
 arbitrary~$A$ and~$H$ satisfying the conditions
of Theorem~\ref{TheoremMain}. In particular, $\mathop\mathrm{char} F = 0$,
the Jacobson radical $J:=J(A)$
is $H$-invariant and $A = B \oplus J$
(direct sum of subspaces) where $B=B_1 \oplus \ldots \oplus B_q$ (direct sum of ideals),
$B_i$ are $H$-simple semisimple algebras.

Since $J$ is the Jacobson radical of $A$,
we have $J^{p} = 0$ for some $p\in \mathbb N$.

\begin{lemma}\label{LemmaUpper}
If $\lambda = (\lambda_1, \ldots, \lambda_s) \vdash n$
and $\sum_{i=d+1}^s \lambda_i \geqslant p$
or $\lambda_{\dim A+1} > 0$, then
$m(A, H, \lambda) = 0$.
\end{lemma}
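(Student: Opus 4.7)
The plan is to proceed by contradiction: assume $m(A,H,\lambda)\neq 0$, so that for some Young tableau $T_\lambda$ and some $f\in P^H_n$ the polynomial $e^*_{T_\lambda}f=b_{T_\lambda}a_{T_\lambda}f$ is not an $H$-identity of $A$. The antisymmetrizer $b_{T_\lambda}$ guarantees that $e^*_{T_\lambda}f$ is alternating in the variables of each column of $T_\lambda$. Fix a basis of $A$ taken as the union of bases of $B_1,\ldots,B_q$ and a basis of $J$; by multilinearity one finds basis elements $a_1,\ldots,a_n$ with $e^*_{T_\lambda}f(a_1,\ldots,a_n)\neq 0$, and column-alternation forces the $a_i$'s in any single column to be distinct.

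The case $\lambda_{\dim A+1}>0$ is then immediate: the first column has length at least $\dim A+1$, and one cannot place that many distinct basis elements of $A$ in it. For the case $\sum_{i=d+1}^{s}\lambda_i\geq p$, the plan is to examine a generic nonzero monomial of $e^*_{T_\lambda}f(a_1,\ldots,a_n)$, which has the form $(h_1 a_{\sigma(1)})\cdots(h_n a_{\sigma(n)})$ for some $\sigma\in S_n$ and $h_j\in H$. Because $B_1,\ldots,B_q$ and $J$ are all $H$-invariant, each factor $h_j a_{\sigma(j)}$ stays in the same summand as $a_{\sigma(j)}$; and since $B_k B_{k'}=0$ for $k\neq k'$, any two consecutive $B$-factors must lie in a common $B_k$. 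Hence the $B$-factors of a nonzero monomial group into successive runs in some ideals $B_{k_1},B_{k_2},\ldots,B_{k_r}$, separated by $J$-factors, and the product lies in $B_{k_1}JB_{k_2}J\cdots JB_{k_r}$, which must therefore be nonzero.

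By the very definition of $d=d(A)$, this forces the distinct $B_{k_j}$'s to span a subspace of $B$ of dimension at most $d$, so all $B$-basis-elements appearing in the substitution lie in this $\leq d$-dimensional subspace. Since within any single column the $a_i$'s are distinct basis elements, at most $d$ of them in each column can be from $B$, giving at least $\max(\mu_j-d,0)$ $J$-substituted variables in column $j$ of length $\mu_j$. Summing over columns and invoking the conjugate-partition identity $\sum_j\max(\mu_j-d,0)=\sum_{i>d}\lambda_i$, the total number of $J$-substituted variables is at least $p$. I then invoke the elementary fact that, since $J$ is a two-sided ideal with $J^p=0$, any product $c_1 c_2\cdots c_n$ of elements of $A$ in which at least $p$ of the $c_i$ belong to $J$ lies in $J^p=0$ (by induction on the number of $J$-factors, each absorbed via the ideal property). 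Hence every monomial in the expansion vanishes and so does $e^*_{T_\lambda}f(a_1,\ldots,a_n)$, contradicting our choice of $a_i$'s.

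The main obstacle is the bound of $d$ on the number of $B$-basis-elements per column: this is precisely where the subtle definition of $d(A)$ through chains $B_{i_1}JB_{i_2}J\cdots JB_{i_r}$ is used, and one must also check that the generalized $H$-action, despite its loose compatibility with multiplication, still respects the block decomposition $A=B_1\oplus\cdots\oplus B_q\oplus J$ factor by factor, so that the combinatorial analysis of which slots are $B$-type versus $J$-type is not perturbed by the $h_j$'s. Once that bound is secured, the reduction via $J^p=0$ is routine.
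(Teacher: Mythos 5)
Your proposal is correct and follows essentially the same route as the paper: fix a basis adapted to $B_1\oplus\cdots\oplus B_q\oplus J$, use column-alternation of $e^*_{T_\lambda}=b_{T_\lambda}a_{T_\lambda}$ to force distinct basis elements in each column (disposing of the case $\lambda_{\dim A+1}>0$ and bounding the $B$-entries per column by $d$ via the chain condition defining $d(A)$), and then kill the remaining ${}\geqslant p$ entries from $J$ with $J^p=0$. Your per-monomial run analysis and the explicit appeal to $H$-invariance of the blocks merely spell out details the paper leaves implicit.
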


\begin{proof}
It is sufficient to prove that $e^{*}_{T_\lambda} f \in \Id^H(A)$
for every $f\in P^H_n$ and a Young tableau $T_\lambda$,
 $\lambda \vdash n$, with
$\sum_{i=d+1}^s \lambda_i \geqslant p$
or $\lambda_{\dim A+1} > 0$.

Fix some basis of $A$ that is a union of
bases of $B_i$, $1\leqslant i \leqslant q$, and $J$.
Since polynomials are multilinear, it is
sufficient to substitute only basis elements.
 Note that
$e^{*}_{T_\lambda} = b_{T_\lambda} a_{T_\lambda}$
and $b_{T_\lambda}$ alternates the variables of each column
of $T_\lambda$. Hence if we make a substitution and $
e^{*}_{T_\lambda} f$ does not vanish, then this implies
 that different basis elements
are substituted for the variables of each column.
But if $\lambda_{\dim A+1} > 0$, then the length of the first
 column is greater
than $\dim A$. Therefore,
 $e^{*}_{T_\lambda} f \in \Id^H(A)$.

Consider the case $\sum_{i=d+1}^s \lambda_i \geqslant p$.
 Fix a substitution of basis elements for the variables
 $x_1, \ldots, x_n$. Suppose $e^{*}_{T_\lambda}f$
 does not vanish under this substitution.
Note that if a product of basis elements does not equal
zero, then, by the definition of $d=d(A)$,
we can choose such $1 \leqslant i_1, \ldots, i_r \leqslant q$,
$0 \leqslant r \leqslant q$, that all these
basis elements belong to
$B_{i_1} \oplus \ldots \oplus B_{i_r} \oplus J$
and $$\dim(B_{i_1} \oplus \ldots \oplus B_{i_r})\leqslant d.$$
Since the polynomial $e^{*}_{T_\lambda}f$ is alternating
in the variables of each column, we cannot substitute
more than $d$ basis elements from
$B_{i_1} \oplus \ldots \oplus B_{i_r}$ in each column.
Thus we must substitute at least
$\sum_{i=d+1}^s \lambda_i \geqslant p$
elements from $J$ for the variables of
$e^{*}_{T_\lambda}f$. Hence $e^{*}_{T_\lambda}f$
vanishes since $J^p = 0$. We get a contradiction.
Therefore, $e^{*}_{T_\lambda}f \in \Id^H(A)$.
\end{proof}

Now we can prove
\begin{theorem}\label{TheoremUpper} If $d > 0$, then
there exist constants $C_2 > 0$, $r_2 \in \mathbb R$
such that $c^H_n(A) \leqslant C_2 n^{r_2} d^n$
for all $n \in \mathbb N$. In the case $d=0$,
 the algebra~$A$ is nilpotent.
\end{theorem}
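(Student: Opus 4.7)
The plan is to derive the upper bound from Lemma~\ref{LemmaUpper} via standard estimates on $S_n$-cocharacters.

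I would begin with the case $d=0$. Taking $r=1$ in the maximum~\eqref{EqdofA} shows that $d\ge\dim B_i$ for every $i$ with $B_i\ne 0$. Hence $d=0$ forces $B=B_1\oplus\cdots\oplus B_q=0$, so $A=J$ is nilpotent.

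For $d>0$, I would expand the codimension via the cocharacter decomposition
$$c^H_n(A)=\sum_{\lambda\vdash n}m(A,H,\lambda)\dim M(\lambda).$$
By Lemma~\ref{LemmaUpper}, only partitions $\lambda=(\lambda_1,\ldots,\lambda_s)$ with $s\le\dim A$ and $\sum_{i=d+1}^{s}\lambda_i<p$ contribute; call these \emph{admissible}. The number of admissible $\lambda\vdash n$ is polynomial in $n$, since the tail beyond row $d$ admits boundedly many shapes while the first $d$ rows need only sum to at most $n$. For each admissible $\lambda$, the hook length formula combined with the classical asymptotic analysis in~\cite[Chapter 6]{ZaiGia} gives $\dim M(\lambda)\le C_1 n^{r_1}d^n$: the tail contributes a polynomial factor, while the $d$ unbounded rows produce the $d^n$ growth.

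The step I expect to be the main obstacle is a polynomial bound on the multiplicities $m(A,H,\lambda)$, uniform in admissible $\lambda$. For this I would use the embedding $P^H_n/(P^H_n\cap\Id^H(A))\hookrightarrow\Hom_F(A^{\otimes n},A)$ from Lemma~\ref{LemmaCodimDim}. Identifying $\Hom_F(A^{\otimes n},A)\cong A\otimes(A^{*})^{\otimes n}$ with the natural $S_n$-action permuting the tensor factors of $(A^{*})^{\otimes n}$, Schur--Weyl duality decomposes the target so that the $M(\lambda)$-isotypic multiplicity equals $(\dim A)\dim S^{\lambda}(A^{*})$, up to the standard sign twist that does not affect dimensions. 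For $\lambda$ with at most $\dim A$ rows, Weyl's dimension formula makes $\dim S^{\lambda}(A^{*})$ polynomial in $n$ of degree bounded in terms of $\dim A$, yielding $m(A,H,\lambda)\le C_0 n^{r_0}$.

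Combining the polynomial bound on the number of admissible $\lambda$, the polynomial bound on multiplicities, and the estimate $\dim M(\lambda)\le C_1 n^{r_1}d^n$ gives the desired inequality $c^H_n(A)\le C_2 n^{r_2}d^n$.
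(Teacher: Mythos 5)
Your proposal is correct and follows the same skeleton as the paper: both handle $d=0$ by noting $d\geqslant\dim B_i$, and both get the $d>0$ bound by combining Lemma~\ref{LemmaUpper} (only partitions with at most $\dim A$ rows and a bounded tail below row $d$ survive), the estimate $\dim M(\lambda)\leqslant C n^{r}d^{n}$ for such $\lambda$ (the paper cites \cite[Lemmas~6.2.4, 6.2.5]{ZaiGia}, which is exactly the hook-formula asymptotics you invoke), and a polynomial bound on multiplicities. The one place you genuinely diverge is that last ingredient: the paper simply cites Berele's colength theorem \cite[Theorem~13(b)]{BereleHopf}, which bounds $\sum_{\lambda\vdash n}m(A,H,\lambda)$ polynomially in one stroke, whereas you prove the per-$\lambda$ bound directly from the embedding of Lemma~\ref{LemmaCodimDim} into $\Hom_F(A^{\otimes n},A)\cong A\otimes(A^{*})^{\otimes n}$ plus Schur--Weyl duality, and then separately count admissible shapes. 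Your route is self-contained and is essentially how Berele's bound is proved, so it buys independence from that reference at the cost of a little extra bookkeeping (you need both the multiplicity bound and the polynomial count of admissible $\lambda$, where the cited theorem bounds the sum directly). One small caution: your aside about a ``sign twist'' should be dropped or justified --- a genuine twist by the sign character would replace $S^{\lambda}(A^{*})$ by $S^{\lambda'}(A^{*})$, which vanishes for the wide admissible shapes and would break the argument; fortunately the action of $S_n$ on multilinear polynomials by permuting variable indices matches the plain permutation of tensor factors with no sign, so no twist occurs and your bound $m(A,H,\lambda)\leqslant C_0 n^{r_0}$ with $r_0=\binom{\dim A}{2}$ stands.
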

\begin{proof} Suppose $d > 0$.
Lemma~\ref{LemmaUpper} and~\cite[Lemmas~6.2.4, 6.2.5]{ZaiGia}
imply
$$
\sum_{m(A,H, \lambda)\ne 0} \dim M(\lambda) \leqslant C_3 n^{r_3} d^n
$$
for some constants $C_3, r_3 > 0$.
By~\cite[Theorem~13~(b) and the remark after Theorem~14]{BereleHopf},
 there exist constants $C_4 > 0$, $r_4 \in \mathbb N$ such that
 $$\sum_{\lambda \vdash n} m(A,H,\lambda)
\leqslant C_4n^{r_4}$$ for all $n \in \mathbb N$.
This implies the upper bound.

Note that $d(A) \geqslant \dim B_i$ for all $1 \leqslant i \leqslant q$.
Thus $d=d(A)=0$ implies $q=0$ and $A=J$. Therefore, $A$ is nilpotent.
\end{proof}

\section{Alternating polynomials}\label{SectionAlt}

In this section we prove auxiliary propositions needed
 to obtain the lower bound.

 Let $B_0$ be an $H$-simple semisimple algebra over an algebraically closed field $F$
 of characteristic $0$ endowed with a generalized Hopf action of
 a finite dimensional associative algebra $H$ with $1$.
 
Denote by $\varphi \colon B_0 \to \End_F(B_0)$
the left regular representation of $B_0$, i.e.
$\varphi(a)b=ab$, $a,b \in B_0$,
and by $\psi \colon B_0 \to \End_F(B_0)$
the right regular representation of $B_0$, i.e.
$\psi(a)b=ba$, $a,b \in B_0$.
Let $\rho$ be the representation $H \to \End_F(B_0)$.

Note that~(\ref{EqGeneralizedHopf}) implies \begin{equation}\label{EqOpComm1}
\rho(h)\varphi(a)=
 \sum_i \Bigl(\varphi(h'_i a)\rho(h''_i)+\psi(h''''_i a)\rho(h'''_i)\Bigr),
\end{equation}
\begin{equation}\label{EqOpComm2}
\rho(h)\psi(a)=
 \sum_i \Bigl(\psi(h''_i a)\rho(h'_i)+\varphi(h'''_i a)\rho(h''''_i)\Bigr),
\end{equation}
\begin{equation}\label{EqOpComm3}
\varphi(a)\psi(b)=\psi(b)\varphi(a)
\end{equation}
for all $a,b \in B_0$.

\begin{lemma}\label{LemmaForm}
The bilinear form $\tr(\varphi(\cdot)\varphi(\cdot))$
is non-degenerate on $B_0$.
\end{lemma}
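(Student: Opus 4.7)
The plan is to prove that the radical
$I := \{a \in B_0 : \tr(\varphi(a)\varphi(b)) = 0 \text{ for all } b \in B_0\}$
of this symmetric bilinear form is zero. Because $\varphi$ is an algebra homomorphism, $\varphi(a)\varphi(b) = \varphi(ab)$, so the form coincides with $(a,b) \mapsto \tr\varphi(ab)$, i.e.\ with the trace form of the left regular representation of $B_0$. Cyclicity of the trace together with $\varphi(ca) = \varphi(c)\varphi(a)$ and $\varphi(ac) = \varphi(a)\varphi(c)$ show at once that $I$ is a two-sided ideal of $B_0$.

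Next I would invoke the semisimplicity hypothesis: since $F$ is algebraically closed of characteristic $0$, the Wedderburn--Artin theorem yields an isomorphism $B_0 \cong M_{n_1}(F) \oplus \cdots \oplus M_{n_k}(F)$ of $F$-algebras. Write $B_0^{(i)}$ for the $i$-th simple summand; the relations $B_0^{(i)} B_0^{(j)} = 0$ for $i \neq j$ imply that distinct blocks are orthogonal for the form. Restricted to a single block $B_0^{(i)} \cong M_{n_i}(F)$, the module $B_0$ collapses to $M_{n_i}(F)$ (the other summands being annihilated), and as a left $M_{n_i}(F)$-module this is a direct sum of $n_i$ copies of the standard simple module. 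Hence for $a,b \in B_0^{(i)}$ one obtains $\tr(\varphi(a)\varphi(b)) = n_i \tr_{M_{n_i}(F)}(ab)$.

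The standard matrix trace form is non-degenerate in characteristic $0$ (witnessed, e.g., by $\tr(e_{rs}e_{sr}) = 1$), and combining the blocks yields non-degeneracy of the whole form, so $I = 0$. I do not foresee any genuine obstacle: the argument is essentially the textbook fact that the trace form of the regular representation of a finite-dimensional semisimple associative algebra over a field of characteristic $0$ is non-degenerate, and neither the $H$-simplicity of $B_0$ nor the commutation relations (\ref{EqOpComm1})--(\ref{EqOpComm3}) are actually needed for the statement of the lemma itself; they will presumably come into play in subsequent results where the $H$-structure is exploited more heavily.
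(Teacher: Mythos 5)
Your proof is correct and follows essentially the same route as the paper: both reduce to the Wedderburn decomposition $B_0\cong M_{k_1}(F)\oplus\cdots\oplus M_{k_s}(F)$ and the computation that on each block the form is $k_i$ times the ordinary matrix trace form (the paper phrases this via $\tr(\varphi(e^{(i)}_{\alpha\beta}))=k_i\delta_{\alpha\beta}$ on matrix units). The only cosmetic remark: characteristic $0$ is needed precisely for the scalar $k_i$ to be nonzero, not for the non-degeneracy of the matrix trace form itself, which your own witness $\tr(e_{rs}e_{sr})=1$ already gives in any characteristic.
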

\begin{proof}
Recall that $B_0$ is semisimple.
 Thus $$B_0\cong M_{k_1}(F)\oplus M_{k_2}(F)
\oplus \ldots \oplus M_{k_s}(F)$$
for some $k_i \in \mathbb N$ where
 $M_{k_i}(F)$ are algebras of $k_i\times k_i$
matrices.

Fix the bases in $M_{k_i}(F)$ that consist of matrix units
 $e_{\alpha\beta}^{(i)}$.
Note that $$\tr(\varphi(e_{\alpha\beta}^{(i)}))=\left\lbrace
\begin{array}{lll} 0 & \text{ if } & \alpha \ne \beta, \\
 k_i & \text{ if } & \alpha = \beta.
\end{array} \right.$$ Thus $\tr(\varphi(e_{\alpha\beta}^{(i)})
 \varphi(b)) \ne 0$
for some basis element $b$ if and only
if $b=e_{\beta\alpha}^{(i)}$.
Hence the matrix of the bilinear
form is non-degenerate.
\end{proof}

Lemma~\ref{LemmaAlternateFirst} is an
 analog of~\cite[Lemma~1]{GiaSheZai}.

\begin{lemma}\label{LemmaAlternateFirst}
Let $a_1, \ldots, a_\ell$ be a basis of $B_0$.
 For some $T\in\mathbb N$ there exists
 a polynomial $$f=f(x_1, \ldots, x_\ell,
y_1, \ldots, y_\ell, z_1, \ldots, z_T, z) \in P^H_{2\ell+T+1}$$
  alternating in $\lbrace x_1, \ldots, x_\ell \rbrace$
   and in $\lbrace y_1, \ldots, y_\ell \rbrace$
     where $\ell=\dim B_0$, satisfying the following property:
     there exist $ \bar z_1, \ldots, \bar z_T \in B_0$
such that for any $\bar z \in B_0$ we have
$f(a_1, \ldots, a_\ell,
a_1, \ldots, a_\ell, \bar z_1, \ldots, \bar z_T, \bar z) = \bar z$.
\end{lemma}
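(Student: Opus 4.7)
The plan is to realize the identity operator on $B_0$ as an evaluation of an $H$-polynomial, and then to introduce the required alternations via a Capelli-type construction built from Lemma~\ref{LemmaForm}. First I would show that the subalgebra $R\subseteq \End_F(B_0)$ generated by $\varphi(B_0)$, $\psi(B_0)$, and $\rho(H)$ equals $\End_F(B_0)$. Any $R$-invariant subspace $W \subseteq B_0$ is a two-sided ideal of $B_0$ (being invariant under all left and right multiplications) and $H$-invariant; by $H$-simplicity, $W \in \{0, B_0\}$. Since $F$ is algebraically closed, Burnside's density theorem yields $R=\End_F(B_0)$. In particular there is a decomposition
$$\id_{B_0} = \sum_k \xi_{k,1}\,\rho(h_{k,1})\,\xi_{k,2}\,\rho(h_{k,2})\cdots \xi_{k,m_k}\,\rho(h_{k,m_k}),$$
where each $\xi_{k,j}$ is a product of left and right multiplications by elements of $B_0$.

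Applying both sides to an arbitrary $\bar z \in B_0$ produces, in each summand, an expression of the form $h_{k,m_k}\bigl(\ldots h_{k,1}(c\,\bar z\,c')\cdots\bigr)$. Using the generalized Hopf axiom~(\ref{EqGeneralizedHopf}) inductively, each $\rho(h)$ applied to a product can be rewritten as a sum of terms in which $h$ acts only on individual factors. Iterating this procedure yields a multilinear $H$-polynomial $\tilde g(w_1,\ldots,w_N,z)\in P^H_{N+1}$ together with elements $\bar w_1,\ldots,\bar w_N \in B_0$ such that $\tilde g(\bar w_1,\ldots,\bar w_N,\bar z) = \bar z$ for every $\bar z$. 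Expanding each $\bar w_i$ in the basis $a_1,\ldots,a_\ell$ by multilinearity, I may further assume that each $\bar w_i$ is itself a basis element.

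To introduce the alternations, let $a_1^*,\ldots,a_\ell^*$ be the basis of $B_0$ dual to $a_1,\ldots,a_\ell$ with respect to the nondegenerate form $\tr(\varphi(\cdot)\varphi(\cdot))$ provided by Lemma~\ref{LemmaForm}. The standard Regev--Capelli construction produces multilinear polynomials alternating in $\ell$ variables which, when evaluated at $x_i = a_i$ with auxiliary arguments chosen from $\{a_j^*\}$, realize prescribed left and right multiplications by elements of $B_0$. Applying such a device once to create alternation in variables $x_1,\ldots,x_\ell$ and again to create alternation in variables $y_1,\ldots,y_\ell$, and threading the two envelopes around the polynomial $\tilde g$, I obtain the desired $f$. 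The remaining slots --- the original $w$-arguments of $\tilde g$ and the auxiliary variables of the envelopes --- become $z_1,\ldots,z_T$, with the corresponding $\bar z_j$ read off from the construction. After rescaling to absorb the scalar produced by the alternation, the substitution $x_i = y_i = a_i$, $z_j = \bar z_j$ returns exactly $\bar z$.

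I expect the principal technical obstacle to be this alternation step: the alternating envelopes must be inserted in a way compatible with the $H$-decorations distributed in the previous step. Because the free algebra $F\langle X \mid H\rangle$ carries no intrinsic $H$-action in the generalized setting, all $H$-decorations live on individual variables and must be tracked carefully when new alternating variables are threaded in. Ensuring that the double alternating sum cancels all cross-terms while leaving a single nonvanishing summand proportional to $\bar z$ --- made more delicate by the fact that $B_0$ is only $H$-simple rather than simple as an algebra, so that one must also control how alternations across its simple blocks interact --- is the combinatorial heart of the argument.
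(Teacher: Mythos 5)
Your opening moves match the paper's: $H$-simplicity plus the density theorem gives that $\varphi(B_0)$, $\psi(B_0)$ and $\rho(H)$ generate $\End_F(B_0)\cong M_\ell(F)$, and the generalized Hopf axiom~(\ref{EqGeneralizedHopf}) (equivalently the commutation rules~(\ref{EqOpComm1})--(\ref{EqOpComm3})) lets you rewrite any word in these operators applied to $z$ as an honest $H$-polynomial; so producing a $\tilde g$ with $\tilde g(\bar w_1,\ldots,\bar w_N,\bar z)=\bar z$ is fine. The gap is in the step you yourself flag as the ``combinatorial heart'': you never actually construct the alternating gadget, and the device you gesture at cannot work as described. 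A multilinear polynomial alternating in $\ell=\dim B_0$ variables built from multiplications and trace pairings alone will in general vanish identically on $B_0$ (already for $B_0=F\oplus F$ one has $x_1x_2-x_2x_1\equiv 0$), so the alternation must be entangled with the $H$-decorations from the outset; it cannot be grafted onto $\tilde g$ afterwards as an ``envelope''. Moreover, the dual-basis/trace-form mechanism of Lemma~\ref{LemmaForm} that you invoke is, in the paper, the tool of Theorem~\ref{TheoremAlternateFinal}: the identity $\sum_i f_1(x_1,\ldots,u v x_i,\ldots)=\tr(\varphi(u)\varphi(v))\,f_1(x_1,\ldots,x_\ell,\ldots)$ which drives that argument presupposes that a nonvanishing polynomial $f_1$ alternating in $x_1,\ldots,x_\ell$ already exists; using it to build $f_1$ is circular.

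The idea your proposal is missing is the following. Take Regev's central polynomial $f_\ell$ for $M_\ell(F)\cong\End_F(B_0)$, alternating in two sets of $\ell^2$ variables, and evaluate it on a basis of $\End_F(B_0)$ of the special form $\varphi(a_1),\ldots,\varphi(a_\ell)$, $\varphi(a_{i_t})\psi(a_{k_t})\rho(h_t)$ (such a basis exists because, by~(\ref{EqOpComm1})--(\ref{EqOpComm3}), the products $\varphi(a)\psi(b)\rho(h)$ span $\End_F(B_0)$). The first $\ell$ slots of each alternating family are occupied by the left multiplications $\varphi(x_i)$ (resp.\ $\varphi(y_i)$), so restricting the alternation of $f_\ell$ to those slots yields precisely the alternation in $\lbrace x_1,\ldots,x_\ell\rbrace$ and $\lbrace y_1,\ldots,y_\ell\rbrace$ required by the lemma, while centrality and non-vanishing of $f_\ell$ on $M_\ell(F)$ guarantee that the evaluation at this basis is a nonzero scalar multiple of $\id_{B_0}$. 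Multiplying by the extra variable $z$, pushing all $\rho(h)$'s to the right, and rescaling by $\mu^{-1}$ gives $f$. Without this (or an equivalent) construction your argument does not close.
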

\begin{proof}
Since $B_0$ is $H$-simple, by the density theorem,
 $\End_F(B_0) \cong M_\ell(F)$ is generated by
 operators from $\rho(H)$,
$\varphi(B_0)$, and $\psi(B_0)$.
By~(\ref{EqOpComm1})--(\ref{EqOpComm3}),
\begin{equation}\label{EqEndB_0}
\End_F(B_0) = \langle \varphi(a)\psi(b)\rho(h)
\mid a,b \in B_0, h \in H \rangle_F.
\end{equation}

Consider Regev's polynomial
$$ f_\ell(x_1, \ldots, x_{\ell^2}; y_1, \ldots, y_{\ell^2})
=\sum_{\substack{\sigma \in S_\ell, \\ \tau \in S_\ell}}
 (\sign(\sigma\tau))
x_{\sigma(1)}\ y_{\tau(1)}\ x_{\sigma(2)}x_{\sigma(3)}x_{\sigma(4)}
\ y_{\tau(2)}y_{\tau(3)}y_{\tau(4)}\ldots
$$
$$ x_{\sigma\left(\ell^2-2\ell+2\right)}\ldots
 x_{\sigma\left(\ell^2\right)}
\ y_{\tau\left(\ell^2-2\ell+2\right)}\ldots
 y_{\tau\left(\ell^2\right)}.
$$ This is a central polynomial~\cite[Theorem~5.7.4]{ZaiGia}
for $M_\ell(F)$, i.e. $f_\ell$ is not a polynomial
 identity for $M_\ell(F)$
and its values belong to the center of $M_\ell(F)$.

Note that $\ker \varphi =0 $ since $B_0$ is semisimple and, therefore, has a unit element. 
Thus $\varphi$ is a monomorphism.
By~(\ref{EqEndB_0}),
$$\varphi(a_1), \ldots, \varphi(a_\ell),
 \quad \varphi(a_{i_1})
\psi(a_{k_1})\rho(h_1),\quad
\ldots, \quad  \varphi(a_{i_s})
\psi(a_{k_s})\rho(h_s)$$ form a basis of $\End_F(B_0)$
for appropriate $i_t,k_t \in \lbrace 1,2, \ldots, \ell \rbrace$,
 $h_t\in H$.
 Now  we replace $x_i$ in $f_\ell$ with $\varphi(x_i)$,
 and $y_i$ with $\varphi(y_i)$ for $1 \leqslant i \leqslant \ell$. Moreover
 we replace  $x_{\ell+j}$
 with $\varphi(z_j)
\psi(u_j)\rho(h_j)$,
and $y_{\ell+j}$
 with $\varphi(v_j)
\psi(w_{j})\rho(h_j)$ for $1 \leqslant j \leqslant s$.
Here $x_i$, $y_i$, $z_i$, $u_i$, $v_i$, $w_i$
are variables with values in $B_0$.
Denote the function obtained by $\tilde f_\ell$.
If we substitute $z_t=v_t=a_{i_t}$,
$u_t=w_t=a_{k_t}$,
$1 \leqslant t \leqslant s$;
$x_i=y_i = a_i$, $1 \leqslant i \leqslant \ell$,
then $\tilde f_\ell$ becomes a scalar operator $(\mu \id_{B_0})$ on $B_0$,
$\mu\in F$, $\mu \ne 0$.
Now we introduce a new variable $z$.
Using~(\ref{EqOpComm1})--(\ref{EqOpComm3}), we move in
$\tilde f_\ell \cdot z$ all $\rho(h_i)$
to the right, and rewrite all $\varphi(\ldots)$
and $\psi(\ldots)$ by the definition.
Then $f := \mu^{-1} \tilde f_\ell \cdot z$ becomes a polynomial from $P^H_n$
for some $n \in \mathbb N$. Let $T := 4s$. Rename $u_i$, $v_i$, $w_i$
to $z_j$ where $s+1 \leqslant j \leqslant T$.
 Then $f$
satisfies all the conditions of the lemma.
\end{proof}

Let $k\ell \leqslant n$ where $k,n \in \mathbb N$ are some numbers.
 Denote by $Q^H_{\ell,k,n} \subseteq P^H_n$
the subspace spanned by all polynomials that are alternating in
$k$ disjoint subsets of variables $\{x^i_1, \ldots, x^i_\ell \}
\subseteq \lbrace x_1, x_2, \ldots, x_n\rbrace$, $1 \leqslant i \leqslant k$.

Theorem~\ref{TheoremAlternateFinal}
 is an analog of~\cite[Theorem~1]{GiaSheZai}.

\begin{theorem}\label{TheoremAlternateFinal}
Let $B_0$ be a $H$-simple semisimple associative algebra over a 
algebraically closed field $F$
of characteristic $0$ endowed with a generalized Hopf action of
 a finite dimensional associative algebra $H$ with $1$.
 Let $a_1, \ldots, a_\ell$ be a basis of $B_0$.
Then there exist $T \in \mathbb Z_+$ and  $\bar z_1, \ldots, \bar z_T \in B_0$ such that
for any $k \in \mathbb N$
there exists $$f=f(x_1^1, \ldots, x_\ell^1; \ldots;
x^{2k}_1, \ldots,  x^{2k}_\ell;\ z_1, \ldots, z_T;\ z) \in Q^H_{\ell, 2k, 2k\ell+T+1}$$
such that for any $\bar z \in B_0$ we have
$f(a_1, \ldots, a_\ell; \ldots;
a_1, \ldots, a_\ell; \bar z_1, \ldots, \bar z_T; \bar z) = \bar z$.
\end{theorem}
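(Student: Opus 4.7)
The plan is to take $T$, $\bar z_1,\ldots,\bar z_T$, and the polynomial $f_1\in P^H_{2\ell+T+1}$ to be exactly those produced by Lemma~\ref{LemmaAlternateFirst}, which satisfy $f_1(a,a,\bar z,z)=z$ for every $z\in B_0$. These same $T$ and $\bar z_i$ will be shown to work for all $k$. For each $k\in\mathbb N$ I aim to construct
$$
f_k(X^1,\ldots,X^{2k},z_1,\ldots,z_T,z) := \gamma_k^{-1}\, g_{k-1}(X^3,\ldots,X^{2k})\, f_1(X^1,X^2,z_1,\ldots,z_T,z),
$$
where $g_{k-1}$ is a multilinear $H$-polynomial in the $2(k-1)\ell$ variables $X^3,\ldots,X^{2k}$ only (no further variables at all), alternating in each of the $2(k-1)$ groups of $\ell$ variables, and whose canonical evaluation on $B_0$ equals $\gamma_k\cdot 1_{B_0}$ for some nonzero $\gamma_k\in F$. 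Because the alternating sets in $g_{k-1}$ are disjoint from those of $f_1$ and from $z_1,\ldots,z_T,z$, the product $f_k$ lies in $Q^H_{\ell,2k,2k\ell+T+1}$, is alternating in the $2k$ prescribed groups, and at the canonical substitution evaluates to $\gamma_k^{-1}\cdot(\gamma_k\cdot 1_{B_0})\cdot\bar z = \bar z$.

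The polynomial $g_{k-1}$ is built as the $(k-1)$-fold product on disjoint variable sets, $g_{k-1} := \prod_{j=1}^{k-1} c(X^{2j+1},X^{2j+2})$, where $c(X,Y)$ is a fixed "basic" polynomial with two alternating sets of $\ell$ variables each, no additional variables, whose canonical evaluation on $B_0$ equals $\mu\cdot 1_{B_0}$ for some $\mu\neq 0$; then $\gamma_k=\mu^{k-1}$. Multilinearity and the alternation structure of $g_{k-1}$ follow automatically from those of $c$ because the variable sets are disjoint. For the special case $B_0=M_m(F)$ simple with trivial $H$-action, Regev's central polynomial $f_m$ for $M_m$ has two alternating sets of $m^2=\ell$ variables and canonical value a nonzero scalar matrix $\mu I_m = \mu\cdot 1_{B_0}$, so one may take $c=f_m$ directly. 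A concrete non-simple example is $B_0=F^2$ with $H=F[\mathbb Z_2]$ acting by swapping the idempotents $e_1,e_2$: there
$$
c(x_1,x_2,y_1,y_2) := (x_1 x_2^{h} - x_2 x_1^{h})(y_1 y_2^{h} - y_2 y_1^{h})
$$
is alternating in $\{x_1,x_2\}$ and $\{y_1,y_2\}$, has no extra variables, and evaluates to $(e_1-e_2)^2 = e_1+e_2 = 1_{B_0}$.

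The main obstacle, and the only genuinely nontrivial step, is the construction of the basic polynomial $c$ with no additional variables in the general $H$-simple semisimple case. A direct application of Lemma~\ref{LemmaAlternateFirst} yields an analogous polynomial with the $T$ auxiliaries $z_1,\ldots,z_T$ attached; the task is to eliminate those auxiliaries by encoding the "non-$\varphi$" part of the basis of $\End_F(B_0)$ entirely through the Hopf action on the alternating variables rather than through fresh formal arguments. Concretely, one revisits the operator-valued polynomial $\tilde f_\ell$ from the proof of Lemma~\ref{LemmaAlternateFirst} and shows that the complementary basis elements $\varphi(a_{i_t})\psi(a_{k_t})\rho(h_t)$ of $\End_F(B_0)$ may be realized by applying suitable $h\in H$ to the already-present alternating variables, and then applying the resulting central operator to $1_{B_0}\in B_0$ (which does not count as a variable). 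The $F^2$-example above illustrates exactly this phenomenon: the action of $h$ converts $e_2$ into $e_1$ inside the polynomial, so that the "off-diagonal" basis elements of $\End_F(F^2)$ never require a separate variable. The general argument requires a case-by-case analysis based on the Wedderburn-type decomposition of $H$-simple semisimple algebras (analogous to Lemma~\ref{LemmaHBSS}), and is what makes this step delicate; once $c$ is in hand, the remaining verification that $f_k$ has all the claimed properties is routine.
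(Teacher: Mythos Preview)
Your proposal has a genuine gap at exactly the place you yourself flag as ``the only genuinely nontrivial step'': the construction of the basic polynomial $c(X,Y)$ with two alternating sets of $\ell$ variables and \emph{no} auxiliary variables, whose canonical value is a nonzero scalar multiple of $1_{B_0}$. You give this polynomial only in two special cases ($B_0=M_m(F)$ with trivial $H$-action, and $B_0=F^2$ with the swap action) and then say the general case ``requires a case-by-case analysis based on the Wedderburn-type decomposition of $H$-simple semisimple algebras.'' But no such classification is available in the generality of the theorem: $H$ is an arbitrary finite-dimensional associative algebra with a generalized Hopf action, and $H$-simple semisimple algebras are not classified. So a ``case-by-case'' argument is not a proof. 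Moreover, your heuristic that the $\psi$-factors in the basis of $\End_F(B_0)$ can always be absorbed into the $H$-action on the existing alternating variables is not substantiated; in your $F^2$ example it works because the algebra is commutative (so $\psi=\varphi$), but for a noncommutative $B_0$ with a nontrivial $H$-action (say $B_0=M_2(F)\oplus M_2(F)$ with $\mathbb Z_2$ swapping the summands) it is not at all clear how to dispense with the extra variables.

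The paper's proof sidesteps this difficulty entirely. Rather than eliminating the auxiliary variables $z_1,\ldots,z_T$, it keeps them and manufactures the additional alternating sets by a trace/determinant trick: starting from $f_1$ (your Lemma~\ref{LemmaAlternateFirst} polynomial), one defines $f_1^{(j)}$ by inserting $u_jv_j$ in front of each $x_i$ and summing over $i$; since the $x_i$ run over a basis, this produces the factor $\tr(\varphi(\bar u_j)\varphi(\bar v_j))$. Alternating over $u_1,\ldots,u_\ell$ and $v_1,\ldots,v_\ell$ then yields $\ell!\det\bigl(\tr(\varphi(a_i)\varphi(a_j))\bigr)$, which is nonzero because the trace form is nondegenerate on a semisimple algebra (Lemma~\ref{LemmaForm}). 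This gives $f_2\in Q^H_{\ell,4,4\ell+T+1}$ with the \emph{same} $T$, and one iterates. The point is that this argument is completely uniform in $B_0$ and $H$: it uses only semisimplicity (via the trace form), never the fine structure of the $H$-action. Your approach, if it could be made to work, would give a slightly cleaner polynomial, but the paper's approach is what actually delivers a proof.
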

\begin{proof}
Let $f_1=f_1(x_1,\ldots, x_\ell,\ y_1,\ldots, y_\ell,
z_1, \ldots, z_T, z)$ be the polynomial from Lemma~\ref{LemmaAlternateFirst}
alternating in $x_1,\ldots, x_\ell$ and in $y_1,\ldots, y_\ell$.
Note that $f_1$ satisfies all the conditions of the theorem for $k=1$.
Thus we may assume that $k > 1$. Note that
$$
f^{(1)}_1(u_1, v_1, x_1, \ldots, x_\ell,\ y_1,\ldots, y_\ell,
z_1, \ldots, z_T, z) :=$$ $$
\sum^\ell_{i=1} f_1(x_1, \ldots, u_1 v_1 x_i,  \ldots, x_\ell,\ y_1,\ldots, y_\ell,
z_1, \ldots, z_T, z)$$
is alternating in $x_1,\ldots, x_\ell$ and in $y_1,\ldots, y_\ell$ too
and $$
f^{(1)}_1(\bar u_1, \bar v_1, \bar x_1, \ldots, \bar x_\ell,\
\bar y_1,\ldots, \bar y_\ell,
\bar z_1, \ldots, \bar z_T, \bar z) =$$
$$
 \tr(\varphi( \bar u_1) \varphi(\bar v_1))
f_1(\bar x_1, \bar x_2, \ldots, \bar x_\ell,\ \bar y_1,\ldots, \bar y_\ell,
\bar z_1, \ldots, \bar z_T, \bar z)
$$
 for any substitution of elements from $B_0$
 since we may assume that $\bar x_1, \ldots, \bar x_\ell$ are different basis elements.

Let $$
f^{(j)}_1(u_1, \ldots, u_j, v_1, \ldots, v_j, x_1, \ldots, x_\ell,\ y_1,\ldots, y_\ell,
z_1, \ldots, z_T, z) :=$$ $$
\sum^\ell_{i=1} f^{(j-1)}_1(u_1, \ldots,  u_{j-1}, v_1, \ldots, v_{j-1},
 x_1, \ldots, u_j v_j x_i,  \ldots, x_\ell,\ y_1,\ldots, y_\ell,
z_1, \ldots, z_T, z),$$
$2 \leqslant j \leqslant \ell$.
Again,
$$
f^{(j)}_1(\bar u_1, \ldots, \bar u_j, \bar v_1, \ldots, \bar v_j, \bar x_1, \ldots, \bar x_\ell,\ \bar y_1,\ldots, \bar y_\ell, \bar z_1, \ldots, \bar z_T, \bar z) =$$
$$ \tr(\varphi( \bar u_1) \varphi(\bar v_1))
 \tr(\varphi( \bar u_2) \varphi(\bar v_2))
 \ldots
 \tr(\varphi( \bar u_j) \varphi(\bar v_j))
 \cdot
$$
\begin{equation}\label{EqKilling}
\cdot
f_1(\bar x_1, \bar x_2, \ldots, \bar x_\ell,\ \bar y_1,\ldots, \bar y_\ell,
\bar z_1, \ldots, \bar z_T, \bar z).
\end{equation}

Note $\det(\tr(\varphi(a_i) \varphi(a_j)))_{i,j=1}^\ell \ne 0$ since the form $\tr(\varphi(\cdot) \varphi(\cdot))$ is non-degenerate
by Lemma~\ref{LemmaForm}.
We define
$$ f_2(u_1, \ldots, u_\ell, v_1, \ldots, v_\ell,
x_1, \ldots, x_\ell, y_1, \ldots, y_\ell, z_1, \ldots, z_T, z) :=
$$ $$\frac{1}{\ell!\det(\tr(\varphi(a_i) \varphi(a_j)))_{i,j=1}^\ell}\sum_{\sigma, \tau \in S_\ell}
\sign(\sigma\tau)
f^{(\ell)}_1(u_{\sigma(1)}, \ldots, u_{\sigma(\ell)}, v_{\tau(1)}, \ldots, v_{\tau(\ell)},$$ $$
 x_1, \ldots, x_\ell,\ y_1,\ldots, y_\ell,
z_1, \ldots, z_T, z).$$
Then $f_2 \in Q^H_{\ell, 4, 4\ell+T+1}$.
Consider a substitution $x_i=y_i=u_i=v_i=a_i$, $1 \leqslant i \leqslant \ell$.
Suppose that the values $z_j=\bar z_j$, $1 \leqslant j \leqslant T$, are chosen
in such a way that $$f_1(a_1, \ldots, a_\ell, a_1, \ldots, a_\ell,
\bar z_1, \ldots, \bar z_T, \bar z) = \bar z \text{\quad for all\quad} \bar z \in B_0.$$
We claim that $$f_2(a_1, \ldots, a_\ell, a_1, \ldots, a_\ell, a_1, \ldots, a_\ell, a_1, \ldots, a_\ell,
\bar z_1, \ldots, \bar z_T, \bar z) = \bar z$$ too.

Indeed,
$$ f_2(a_1, \ldots, a_\ell, a_1, \ldots, a_\ell,
a_1, \ldots, a_\ell, a_1, \ldots, a_\ell, \bar z_1, \ldots, \bar z_T, \bar z) = $$
$$\frac{1}{\ell!\det(\tr(\varphi(a_i) \varphi(a_j)))_{i,j=1}^\ell}\sum_{\sigma, \tau \in S_\ell}
\sign(\sigma\tau)
f^{(\ell)}_1(a_{\sigma(1)}, \ldots, a_{\sigma(\ell)}, a_{\tau(1)}, \ldots, a_{\tau(\ell)}, $$
$$a_1, \ldots, a_\ell,\ a_1,\ldots, a_\ell,
\bar z_1, \ldots, \bar z_T, \bar z).$$ Using~(\ref{EqKilling}), we obtain
$$ f_2(a_1, \ldots, a_\ell, a_1, \ldots, a_\ell,
a_1, \ldots, a_\ell, a_1, \ldots, a_\ell, \bar z_1, \ldots, \bar z_T, \bar z) = $$
$$ \frac{1}{\ell!\det(\tr(\varphi(a_i) \varphi(a_j)))_{i,j=1}^\ell}
\sum_{\sigma, \tau \in S_\ell}
\sign(\sigma\tau) \tr(\varphi(a_{\sigma(1)}) \varphi(a_{\tau(1)}))
  \ldots \tr(\varphi(a_{\sigma(\ell)}) \varphi(a_{\tau(\ell)}))\cdot$$ $$
f_1(a_1, \ldots, a_\ell,\ a_1,\ldots, a_\ell,
\bar z_1, \ldots, \bar z_T, \bar z).
$$
 Note that
$$\sum_{\sigma, \tau \in S_\ell}
\sign(\sigma\tau) \tr(\varphi(a_{\sigma(1)}) \varphi(a_{\tau(1)}))
  \ldots \tr(\varphi(a_{\sigma(\ell)}) \varphi(a_{\tau(\ell)}))
=$$ $$\sum_{\sigma, \tau \in S_\ell}
\sign(\sigma\tau) \tr(\varphi(a_{1}) \varphi(a_{\tau\sigma^{-1}(1)}))
  \ldots \tr(\varphi(a_{\ell}) \varphi(a_{\tau\sigma^{-1}(\ell)}))
\mathrel{\stackrel{(\tau'=\tau\sigma^{-1})}{=}}$$
$$\sum_{\sigma, \tau' \in S_\ell}
\sign(\tau') \tr(\varphi(a_{1}) \varphi(a_{\tau'(1)}))
  \ldots \tr(\varphi(a_{\ell}) \varphi(a_{\tau'(\ell)}))
=$$
$$\ell!\det(\tr(\varphi(a_i) \varphi(a_j)))_{i,j=1}^\ell.$$
Thus $$ f_2(a_1, \ldots, a_\ell, a_1, \ldots, a_\ell,
a_1, \ldots, a_\ell, a_1, \ldots, a_\ell, \bar z_1, \ldots, \bar z_T, \bar z) = \bar z. $$
Note that if $f_1$ is alternating in some of $z_1,\ldots, z_T$,
the polynomial $f_2$
is alternating in those variables too.
Thus if we apply the same procedure to
$f_2$ instead of $f_1$, we obtain $f_3 \in Q^H_{\ell, 6, 6\ell+T+1}$.
Analogously, we define $f_4$ using $f_3$, $f_5$ using $f_4$, etc.
Eventually, we obtain
$f:=f_k \in Q^H_{\ell, 2k, 2k\ell+T+1}$.
\end{proof}

\section{Lower bound}\label{SectionLower}

By the definition of $d=d(A)$, there exist $H$-simple semisimple algebras
$B_{i_1}$, $B_{i_2}$, \ldots, $B_{i_r}$ such that
$$d = \dim(B_{i_1} \oplus B_{i_2} \oplus \ldots \oplus B_{i_r})$$
and
\begin{equation}\label{EqNonZero}
B_{i_1} J B_{i_2} J \ldots J B_{i_r} \ne 0.
\end{equation}
Since $A$ is non-nilpotent, we have $d > 0$.

Our aim is to present a partition $\lambda \vdash n$
with $m(A, H, \lambda)\ne 0$ such that $\dim M(\lambda)$
has the desired asymptotic behavior.
We will glue alternating polynomials constructed
 in Theorem~\ref{TheoremAlternateFinal}
for $B_{i_k}$.

\begin{lemma}\label{LemmaAlt} If $d \ne 0$, then there exist a number $n_0 \in \mathbb N$ such that for every $n\geqslant n_0$
there exist disjoint subsets $X_1$, \ldots, $X_{2k} \subseteq \lbrace x_1, \ldots, x_n
\rbrace$, $k = \left[\frac{n-n_0}{2d}\right]$,
$|X_1| = \ldots = |X_{2k}|=d$ and a polynomial $f \in P^H_n \backslash
\Id^H(A)$ alternating in the variables of each set $X_j$.
\end{lemma}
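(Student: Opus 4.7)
The plan is to glue alternating polynomials produced by Theorem~\ref{TheoremAlternateFinal} for each $B_{i_s}$ into one product, then antisymmetrize over the merged variable sets of size $d$.

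Set $\ell_s := \dim B_{i_s}$, so $d = \ell_1 + \cdots + \ell_r$, and by~(\ref{EqNonZero}) pick $b_s \in B_{i_s}$ and $c_s \in J$ with $\gamma := b_1 c_1 b_2 c_2 \cdots c_{r-1} b_r \ne 0$. For each $s$ and each $k \in \mathbb{N}$, Theorem~\ref{TheoremAlternateFinal} furnishes a number $T_s \in \mathbb{Z}_+$, elements $\bar z^{(s)}_1, \ldots, \bar z^{(s)}_{T_s} \in B_{i_s}$, and an $H$-polynomial $f_s$ alternating in $2k$ disjoint sets $X^{(s)}_j$ of size $\ell_s$, such that substituting a fixed basis of $B_{i_s}$ for each $X^{(s)}_j$, the $\bar z^{(s)}_i$ for the auxiliary variables, and $b_s$ for the target variable $z^{(s)}$, yields the value $b_s$. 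I choose all variables in the different $f_s$ to be pairwise disjoint, introduce fresh variables $y_1, \ldots, y_{r-1}$, and set
$$g := f_1 \cdot y_1 \cdot f_2 \cdot y_2 \cdots y_{r-1} \cdot f_r;$$
extending the substitution above by $y_s \mapsto c_s$ evaluates $g$ to $\gamma$.

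Merge the alternating sets: for $j = 1, \ldots, 2k$ put $\tilde X_j := X^{(1)}_j \sqcup \cdots \sqcup X^{(r)}_j$, a set of size $d$, and let $\hat g$ denote the full antisymmetrization of $g$ over each $\tilde X_j$ in turn. Expanding $\hat g = \sum_\sigma \sign(\sigma)\, g^\sigma$ with $\sigma \in \prod_{j=1}^{2k} \mathrm{Sym}(\tilde X_j)$, the crucial step and the main obstacle is to show that on the substitution above $g^\sigma$ vanishes unless every $\sigma_j$ preserves the partition $\tilde X_j = \bigsqcup_s X^{(s)}_j$. Indeed, if some $\sigma_j$ sends an $x \in X^{(s)}_j$ into $X^{(t)}_j$ with $t \ne s$, then in $f_s^\sigma$ the position originally occupied by $x$ now carries the variable $\sigma_j(x)$, whose substituted value lies in $B_{i_t}$; multilinearity of $f_s$ forces every monomial also to contain $z^{(s)}$, which is substituted as $b_s \in B_{i_s}$, and $B_{i_s} B_{i_t} = 0$ (the $B_i$ being direct-summand ideals of the semisimple algebra $B$) kills every such monomial, hence $f_s^\sigma$ and the whole product $g^\sigma$. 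For partition-preserving $\sigma$, each $f_s$ alternates separately in its own $X^{(s)}_j$, so $g^\sigma = \sign(\sigma)\, g$ and the signs square out; the surviving terms sum to $\bigl(\ell_1!\cdots\ell_r!\bigr)^{2k}\gamma \ne 0$, showing $\hat g \notin \Id^H(A)$.

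It remains to fit $\hat g$ inside $P^H_n$ for the prescribed $n$. Set $n_0 := \sum_{s=1}^r (T_s + 1) + (r - 1)$, so that $\hat g \in P^H_{m_k}$ with $m_k := 2kd + n_0$; taking $k := \lfloor (n - n_0)/(2d) \rfloor$ gives $m_k \leqslant n$. The case $k = 0$ is trivial since $A$ is non-nilpotent and the monomial $x_1 \cdots x_n$ is not an $H$-identity. For $k \geqslant 1$, append $n - m_k < 2d$ fresh variables $u_1, \ldots, u_{n-m_k}$ to the right end of $\hat g$ and substitute each $u_i \mapsto 1_B$, the unit of the semisimple algebra $B = \bigoplus_i B_i$; since $\gamma$ ends in $b_r \in B_{i_r}$ and $b_r \cdot 1_B = b_r \cdot 1_{B_{i_r}} = b_r$ (using $B_{i_r} B_i = 0$ for $i \ne i_r$), the evaluation of the padded polynomial is still $\gamma \ne 0$. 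This yields the desired $f \in P^H_n \setminus \Id^H(A)$ alternating in the $d$-element sets $\tilde X_1, \ldots, \tilde X_{2k}$.
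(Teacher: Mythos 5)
Your proof is correct and its core is the same as the paper's: take the polynomials supplied by Theorem~\ref{TheoremAlternateFinal} for $B_{i_1},\ldots,B_{i_r}$, link them by bridge variables evaluated in $J$ according to~(\ref{EqNonZero}), merge the $r$ alternating sets of sizes $\dim B_{i_t}$ into sets of size $d$, and kill the cross terms of the antisymmetrization using the fact that the $B_i$ are mutually annihilating $H$-invariant ideals of $B$. Where you genuinely depart from the paper is the degree bookkeeping: the paper reaches degree exactly $n$ by nesting a second copy $\tilde f_1$ (built for a suitable $\tilde k$) inside the target variable of $f_1$ and then truncating one of its monomials, which forces an appeal to~(\ref{EqGeneralizedHopf}) to turn the resulting composition back into an element of $P^H_n$; you instead append $n-m_k<2d$ fresh variables on the right and evaluate them at $1_B$. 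Your padding is simpler and avoids the composition issue entirely (a product of multilinear $H$-polynomials in disjoint variables is already in $P^H_n$), and it is sufficient for the later use of the lemma, since Lemma~\ref{LemmaCochar} only needs the $2k$ alternating sets of size $d$ and makes no demand on where the remaining $n-2kd$ variables sit. The only points worth making explicit in your write-up are that $1_B$ exists because $B$ is a finite dimensional semisimple algebra, and that the swapped-in value stays in $B_{i_t}$ under the $H$-action because each $B_{i_t}$ is $H$-invariant; both are immediate from the hypotheses of Theorem~\ref{TheoremMain}.
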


\begin{proof}
Let $a^{(t)}_{i}$, $1 \leqslant i \leqslant d_t := \dim (B_{i_t})$,
 be a basis in $B_{i_t}$, $1 \leqslant t \leqslant r$.

In virtue of Theorem~\ref{TheoremAlternateFinal},
there exist constants $m_t \in \mathbb Z_+$
such that for any $k$ there exist
 multilinear polynomials $$f_t=f_t(x^{(t, 1)}_1,
 \ldots, x^{(t, 1)}_{d_1};
 \ldots;  x^{(t, 2k)}_1,
 \ldots, x^{(t, 2k)}_{d_1}; z^{(t)}_1, \ldots, z^{(t)}_{m_t}; z_t) \in Q^H_{d_t, 2k, 2k d_t+m_t+1}$$
alternating in the variables from disjoint sets
$X^{(t)}_{\ell}=\lbrace x^{(t, \ell)}_1, x^{(t, \ell)}_2,
\ldots, x^{(t, \ell)}_{d_t} \rbrace$, $1 \leqslant \ell \leqslant 2k$.
There exist $\bar z^{(t)}_\alpha \in B_{i_t}$, $1 \leqslant \alpha \leqslant m_t$,
such that
$$f_t(a^{(t)}_1,
 \ldots, a^{(t)}_{d_t};
 \ldots;  a^{(t)}_1,
 \ldots, a^{(t)}_{d_t}; \bar z^{(t)}_1, \ldots, \bar z^{(t)}_{m_t}; \bar z_t)=\bar z_t$$
 for any $\bar z_t \in B_{i_t}$.

 Let $n_0 = 2r-1+\sum_{i=1}^r m_i$, $k = \left[\frac{n-n_0}{2d}\right]$, $\tilde k =
 \left[\frac{(n-2kd-n_0)-m_1}{2d_1}\right]+1$. We choose $f_t$ for  $B_{i_t}$ and $k$,
 $1 \leqslant t \leqslant r$. In addition, again by Theorem~\ref{TheoremAlternateFinal},
  we take $\tilde f_1$ for $B_{i_1}$ and $\tilde k$.
  Let $$\hat f_0 := f_1\left(x^{(1, 1)}_1,
 \ldots, x^{(1, 1)}_{d_1};
 \ldots;  x^{(1, 2k)}_1,
 \ldots, x^{(1, 2k)}_{d_1}; z^{(1)}_1, \ldots, z^{(1)}_{m_1}; \right.$$ $$\left.
 \tilde f_1(y^{(1)}_1,
 \ldots, y^{(1)}_{d_1};
 \ldots;  y^{(2\tilde k)}_1,
 \ldots, y^{(2\tilde k)}_{d_1}; u_1, \ldots, u_{m_1}; z_1)\right)\cdot $$ $$
 \prod_{t=2}^{r}\left( v_{t-1}\
 f_t(x^{(t, 1)}_1,
 \ldots, x^{(t, 1)}_{d_t};
 \ldots;  x^{(t, 2k)}_1,
 \ldots, x^{(t, 2k)}_{d_t}; z^{(t)}_1, \ldots, z^{(t)}_{m_t}; z_t)\right).$$
 
 Note that without additional manipulations a composition
 of $H$-polynomials is only a multilinear function but not an $H$-polynomial.
 However, using~(\ref{EqGeneralizedHopf}), we can always represent
 such function by an $H$-polynomial. Here we make such manipulations at the very end of the proof.

By~(\ref{EqNonZero}),
$$b_1 j_1 b_2 j_2 \ldots j_{r-1} b_r \ne 0 $$
 for some $b_t \in B_{i_t}$ and $j_t \in J$.
 Denote by $b\in A$
 the value of $\hat f_0$ under the substitution $x^{(t, \alpha)}_{\beta}=a^{(t)}_\beta$,
 $z^{(t)}_{\beta}=\bar z^{(t)}_\beta$, $v_t=j_t$, $z_t=b_t$,
  $y^{(\alpha)}_{\beta}=a^{(1)}_\beta$, $u_{\beta}=\bar z^{(1)}_\beta$.
    Then $$b = b_1 j_1 b_2 j_2 \ldots j_{r-1} b_r \ne 0.$$
    We denote that substitution by $\Xi$.
Let $X_\ell = \bigcup_{t=1}^r X^{(t)}_\ell$ and let $\Alt_\ell$
be the operator of alternation on the set $X_\ell$.
   Denote $\hat f := \Alt_1 \Alt_2 \ldots \Alt_{2k} \hat f_0$.
   Note that the alternations do not change $z_t$,
   and $f_t$ is alternating on each $X^{(t)}_\ell$.
   Hence the value of $\hat f$ under the substitution $\Xi$
   equals $\left((d_1)! (d_2)! \ldots (d_{r})!\right)^{2k}\ b \ne 0$
   since $B_{i_1} \oplus \ldots \oplus B_{i_r}$ is a direct sum
   of ideals and if the alternation puts a variable from
   $X^{(t)}_\ell$ on the place of a variable from $X^{(t')}_\ell$
   for $t \ne t'$, the corresponding $a^{(t)}_\beta$ annihilates $b_{t'}$.

   Note that $\tilde f_1$ is a linear combination
   of multilinear monomials $W$, and one of the terms
   $$\Alt_1 \Alt_2 \ldots \Alt_{2k} f_1(x^{(1, 1)}_1,
 \ldots, x^{(1, 1)}_{d_1};
 \ldots;  x^{(1, 2k)}_1,
 \ldots, x^{(1, 2k)}_{d_1}; z^{(1)}_1, \ldots, z^{(1)}_{m_1}; W)\cdot $$ $$
 \prod_{q=2}^{r}\left( v_{q-1}\
 f_t(x^{(t, 1)}_1,
 \ldots, x^{(t, 1)}_{d_t};
 \ldots;  x^{(t, 2k)}_1,
 \ldots, x^{(t, 2k)}_{d_t}; z^{(t)}_1, \ldots, z^{(t)}_{m_t}; z_t)\right)$$
 in $\hat f$ does not vanish under the substitution $\Xi$.
 Moreover, $$\deg \hat f = 2kd + (2\tilde k d_1+m_1) + n_0 > n$$ and $\deg W = \deg \tilde f_1
 = 2\tilde k d_1+m_1+1$.
 Let $W=w_1 w_2 \ldots w_{2\tilde k d_1+m_1+1}$ where $w_i$ are  variables
 from the set $\lbrace y^{(1)}_1,
 \ldots, y^{(1)}_{d_1};
 \ldots;  y^{(2\tilde k)}_1,
 \ldots, y^{(2\tilde k)}_{d_1}; u_1, \ldots, u_{m_1}; z_1\rbrace$
 replaced under the substitution $\Xi$ with $\bar w_i \in B_{i_1}$.
 Let    $$f:=\Alt_1 \Alt_2 \ldots \Alt_{2k} f_1(x^{(1, 1)}_1,
 \ldots, x^{(1, 1)}_{d_1};
 \ldots;  x^{(1, 2k)}_1,
 \ldots, x^{(1, 2k)}_{d_1}; z^{(1)}_1, \ldots, z^{(1)}_{m_1}; $$ $$
 w_1 w_2 \ldots w_{n-2kd-n_0} z)\cdot$$ $$
 \prod_{q=2}^{r}\left( v_{q-1}\
 f_t(x^{(t, 1)}_1,
 \ldots, x^{(t, 1)}_{d_1};
 \ldots;  x^{(t, 2k)}_1,
 \ldots, x^{(t, 2k)}_{d_1}; z^{(t)}_1, \ldots, z^{(t)}_{m_t}; z_t)\right)$$
where $z$ is an additional variable.
Then using~(\ref{EqGeneralizedHopf}), we may assume $f \in P_n^H$.
Note that $f$ is alternating in $X_\ell$, $1 \leqslant \ell \leqslant 2k$,
and does not vanish under the substitution $\Xi$
with $z = \bar w_{n-2kd-n_0+1} \ldots \bar w_{2\tilde k d_1+m_1+1}$.
 Thus $f$ satisfies all the conditions of the lemma.
\end{proof}

\begin{lemma}\label{LemmaCochar} Let
 $k, n_0$ be the numbers from
Lemma~\ref{LemmaAlt}.   Then for every $n \geqslant n_0$ there exists
a partition $\lambda = (\lambda_1, \ldots, \lambda_s) \vdash n$,
$\lambda_i > 2k-p$ for every $1 \leqslant i \leqslant d$,
with $m(A, H, \lambda) \ne 0$.
\end{lemma}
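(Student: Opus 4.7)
The plan is to apply Lemma~\ref{LemmaAlt} to obtain a polynomial $f \in P^H_n \setminus \Id^H(A)$ that is alternating in disjoint sets $X_1, \ldots, X_{2k}$ of cardinality $d$, and then to extract the desired partition from the $S_n$-isotypic decomposition of the image $\bar f \in V_n := P^H_n / (P^H_n \cap \Id^H(A))$. Writing $\bar f = \sum_{\lambda \vdash n} \bar f_\lambda$ in isotypic components, since $\bar f \ne 0$ some $\bar f_\lambda$ is non-zero, and for any such $\lambda$ we have $m(A, H, \lambda) \ne 0$. It then suffices to show that at least one such $\lambda$ satisfies $\lambda_i > 2k - p$ for all $1 \le i \le d$.

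To pin down the shape of $\lambda$, I would observe that each $\bar f_\lambda$ inherits the alternation in $X_1, \ldots, X_{2k}$: the projection onto the $\lambda$-isotypic component is $S_n$-equivariant and hence commutes in particular with the Young subgroup $S_{X_1} \times \cdots \times S_{X_{2k}}$. Thus $\bar f_\lambda$ lies in the $(\sign)^{\otimes 2k}$-isotypic part of the $\lambda$-component of $V_n$ viewed as a module over $S_{X_1} \times \cdots \times S_{X_{2k}} \times S_Y$, with $Y := \lbrace 1, \ldots, n \rbrace \setminus (X_1 \cup \cdots \cup X_{2k})$. By Frobenius reciprocity, $M(\lambda)$ then appears as a constituent of $\mathrm{Ind}^{S_n}_{S_{X_1} \times \cdots \times S_{X_{2k}} \times S_Y}\bigl((\sign)^{\otimes 2k} \otimes W\bigr)$ for some irreducible $S_Y$-module $W$. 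Passing to symmetric functions, $s_\lambda$ is a summand of $(s_{1^d})^{2k} \cdot s_\nu$ for some $\nu \vdash n - 2kd$. By iterated Pieri, $(s_{1^d})^{2k} = \sum_\mu a_\mu s_\mu$ with $a_\mu > 0$ only for $\mu \vdash 2kd$ obtainable from the empty shape by $2k$ successive vertical strips of size $d$; every such $\mu$ has $\mu_1 \le 2k$, since any row receives at most one box per step. Combined with the Littlewood--Richardson rule, there must exist such a $\mu$ with $\lambda \supseteq \mu$.

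Finally, I would combine this with Lemma~\ref{LemmaUpper}: any $\lambda$ with $m(A, H, \lambda) \ne 0$ obeys $\sum_{i=d+1}^s \lambda_i < p$. Suppose some $\mu_{i_0} < 2k - (p - 1)$ for an index $i_0 \le d$. Using $\mu_i \le \mu_1 \le 2k$ for all $i$ and $|\mu| = 2kd$,
$$\sum_{i=1}^d \mu_i = \mu_{i_0} + \sum_{\substack{i \le d \\ i \ne i_0}} \mu_i < (2k - (p-1)) + (d-1)\cdot 2k = 2kd - (p-1),$$
hence $\sum_{i > d} \mu_i \ge p$, and since $\lambda \supseteq \mu$ also $\sum_{i > d} \lambda_i \ge p$, contradicting Lemma~\ref{LemmaUpper}. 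Therefore $\lambda_i \ge \mu_i \ge 2k - (p-1) > 2k - p$ for every $1 \le i \le d$, as required.

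The main obstacle is the representation-theoretic middle paragraph: one needs to correctly apply Frobenius reciprocity to the restriction to the product of Young subgroups, and to invoke Pieri / Littlewood--Richardson in order to constrain the shape of $\mu$. The opening reduction via Lemma~\ref{LemmaAlt} and the closing extremal estimate are both short by comparison.
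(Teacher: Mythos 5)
Your proof is correct, but it takes a genuinely different route from the paper's. The paper works directly with Young symmetrizers: it decomposes $FS_n=\bigoplus_{\lambda,T_\lambda}FS_ne^{*}_{T_\lambda}$, concludes that $e^{*}_{T_\lambda}f\notin\Id^H(A)$ for some standard tableau $T_\lambda$, and then observes that since $a_{T_\lambda}$ symmetrizes the entries of each row while $f$ is alternating in each $X_i$, no row of $T_\lambda$ can contain two indices from the same $X_i$; this gives $\sum_{i=1}^{d-1}\lambda_i\leqslant 2k(d-1)+(n-2kd)=n-2k$, which combined with $\sum_{i=1}^{d}\lambda_i>n-p$ from Lemma~\ref{LemmaUpper} yields $\lambda_d>2k-p$ in one line. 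You instead project $\bar f$ onto its isotypic components (legitimate, since the central idempotents commute with the alternation operators), and then constrain the shape of $\lambda$ via Frobenius reciprocity, the Pieri rule for $e_d^{2k}$, and the containment property of Littlewood--Richardson coefficients, before closing with the same appeal to Lemma~\ref{LemmaUpper}. Both arguments are sound and both hinge on the same two inputs (the multialternating polynomial of Lemma~\ref{LemmaAlt} and the row-sum bound of Lemma~\ref{LemmaUpper}); the paper's tableau argument is shorter and more elementary, while yours buys a slightly more structural conclusion, namely that $D_\lambda$ contains a subdiagram $D_\mu$ with $|\mu|=2kd$ and $\mu_1\leqslant 2k$ built from $2k$ vertical $d$-strips, from which the desired rectangle is extracted. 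One stylistic remark: the heavy machinery in your middle paragraph (induction from Young subgroups, symmetric functions) is doing work that the single observation ``a row symmetrizer kills a polynomial alternating in two of the row's variables'' accomplishes directly, so it is worth knowing the shortcut.
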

\begin{proof}
Consider the polynomial $f$ from Lemma~\ref{LemmaAlt}.
It is sufficient to prove that $e^*_{T_\lambda} f \notin \Id^H(A)$
for some tableau $T_\lambda$ of the desired shape $\lambda$.
It is known that $$FS_n = \bigoplus_{\lambda,T_\lambda} FS_n e^{*}_{T_\lambda}$$ where the summation
runs over the set of all standard tableaux $T_\lambda$,
$\lambda \vdash n$. Thus $FS_n f = \sum_{\lambda,T_\lambda} FS_n e^{*}_{T_\lambda}f
\not\subseteq \Id^H(A)$ and $e^{*}_{T_\lambda} f \notin \Id^H(A)$ for some $\lambda \vdash n$.
We claim that $\lambda$ is of the desired shape.
It is sufficient to prove that
$\lambda_d > 2k-p$, since
$\lambda_i \geqslant \lambda_d$ for every $1 \leqslant i \leqslant d$.
Each row of $T_\lambda$ includes numbers
of no more than one variable from each $X_i$,
since $e^{*}_{T_\lambda} = b_{T_\lambda} a_{T_\lambda}$
and $a_{T_\lambda}$ is symmetrizing the variables of each row.
Thus $\sum_{i=1}^{d-1} \lambda_i \leqslant 2k(d-1) + (n-2kd) = n-2k$.
In virtue of Lemma~\ref{LemmaUpper},
$\sum_{i=1}^d \lambda_i > n-p$. Therefore
$\lambda_d > 2k-p$.
\end{proof}

\begin{proof}[Proof of Theorem~\ref{TheoremMain}]
The Young diagram~$D_\lambda$ from Lemma~\ref{LemmaCochar} contains
the rectangular subdiagram~$D_\mu$, $\mu=(\underbrace{2k-p, \ldots, 2k-p}_d)$.
The branching rule for $S_n$ implies that if we consider a restriction of
$S_n$-action on $M(\lambda)$ to $S_{n-1}$, then
$M(\lambda)$ becomes the direct sum of all non-isomorphic
$FS_{n-1}$-modules $M(\nu)$, $\nu \vdash (n-1)$, where each $D_\nu$ is obtained
from $D_\lambda$ by deleting one box. In particular,
$\dim M(\nu) \leqslant \dim M(\lambda)$.
Applying the rule $(n-d(2k-p))$ times, we obtain $\dim M(\mu) \leqslant \dim M(\lambda)$.
By the hook formula, $$\dim M(\mu) = \frac{(d(2k-p))!}{\prod_{i,j} h_{ij}}$$
where $h_{ij}$ is the length of the hook with edge in $(i, j)$.
By Stirling formula,
$$c_n^H(A)\geqslant \dim M(\lambda) \geqslant \dim M(\mu) \geqslant \frac{(d(2k-p))!}{((2k-p+d)!)^d}
\sim $$ $$\frac{
\sqrt{2\pi d(2k-p)} \left(\frac{d(2k-p)}{e}\right)^{d(2k-p)}
}
{
\left(\sqrt{2\pi (2k-p+d)}
\left(\frac{2k-p+d}{e}\right)^{2k-p+d}\right)^d
} \sim C_5 k^{r_5} d^{2kd}$$
for some constants $C_5 > 0$, $r_5 \in \mathbb Q$,
as $k \to \infty$.
Since $k = \left[\frac{n-n_0}{2d}\right]$,
this gives the lower bound.
The upper bound has been proved in Theorem~\ref{TheoremUpper}.
\end{proof}

\section{Examples and applications}\label{SectionExamples}

 Now we apply formula~(\ref{EqdofA}) to calculate the (generalized) Hopf PI-exponent for several important examples. In all of them except the one of Sweedler's algebra we assume $F$
 to be an algebraically closed field of characteristic~$0$.
 
 \subsection{Sums of $H$-simple algebras}

  \begin{example}\label{ExampleHSemiSimple}
 Let $A=B_1 \oplus B_2 \oplus \ldots \oplus B_q$
  be an algebra with a generalized $H$-action,
   where $B_i$ are finite dimensional $H$-simple semisimple
  algebras and $H$ is a finite dimensional associative algebra with~$1$. Let $d := \max_{1 \leqslant k
  \leqslant q} \dim B_k$. Then there exist $C_1, C_2 > 0$, $r_1, r_2 \in \mathbb R$
  such that $$C_1 n^{r_1} d^n \leqslant c_n^{H}(A)
  \leqslant C_2 n^{r_2} d^n \text{ for all } n\in\mathbb N.$$
 \end{example}
 \begin{proof}
 This follows immediately from Theorem~\ref{TheoremMain} and~(\ref{EqdofA}).
 \end{proof}

  \begin{example}\label{ExampleGrSemiSimple}
 Let $A=B_1 \oplus B_2 \oplus \ldots \oplus B_q$
  be a semisimple algebra graded by a finite group, where $B_i$ are finite dimensional graded simple
  algebras. Let $d := \max_{1 \leqslant k
  \leqslant q} \dim B_k$. Then there exist $C_1, C_2 > 0$, $r_1, r_2 \in \mathbb R$
  such that $C_1 n^{r_1} d^n \leqslant c_n^{\mathrm{gr}}(A)
  \leqslant C_2 n^{r_2} d^n$ for all $n\in\mathbb N$.
 \end{example}
 \begin{proof}
 This follows immediately from Lemma~\ref{LemmaGradAction} and Example~\ref{ExampleHSemiSimple}.
 \end{proof}

  \begin{example}\label{ExampleSemiSimpleG}
 Let $A=B_1 \oplus B_2 \oplus \ldots \oplus B_q$
  be a semisimple $G$-algebra where $B_i$ are finite dimensional $G$-simple
  algebras and $G$ is a finite group. Let $d := \max_{1 \leqslant k
  \leqslant q} \dim B_k$. Then there exist $C_1, C_2 > 0$, $r_1, r_2 \in \mathbb R$
  such that $C_1 n^{r_1} d^n \leqslant c_n^G(A)
  \leqslant C_2 n^{r_2} d^n$ for all $n\in\mathbb N$.
 \end{example}
 \begin{proof}
 This follows immediately from Theorem~\ref{TheoremMainG} and~(\ref{EqdofA}).
 \end{proof}

\subsection{Examples of algebras graded by non-Abelian groups}

\begin{example}\label{Example2M2S3}
Let $G=S_3$ and $A=M_2(F)\oplus M_2(F)$. Consider the following $G$-grading
on~$A$: $$A^{(e)} = \left\lbrace\left(\begin{array}{rr}
\alpha & 0 \\
 0 & \beta 
\end{array} \right)\right\rbrace \oplus  \left\lbrace\left(\begin{array}{rr}
\gamma & 0 \\
 0 & \mu 
\end{array} \right)\right\rbrace,$$ $$A^{\bigl((12)\bigr)} = \left\lbrace\left(\begin{array}{rr}
0 & \alpha \\
 \beta & 0  
\end{array} \right)\right\rbrace \oplus  0,\qquad A^{\bigl((23)\bigr)} = 0 \oplus \left\lbrace\left(\begin{array}{rr}
0 & \alpha \\
 \beta & 0  
\end{array} \right)\right\rbrace,$$ the other components are zero.
Then there exist $C_1, C_2 > 0$, $r_1, r_2 \in \mathbb R$
  such that $$C_1 n^{r_1} 4^n \leqslant c_n^{\mathrm{gr}}(A)
  \leqslant C_2 n^{r_2} 4^n \text{ for all } n \in\mathbb N.$$
\end{example}
\begin{proof}
Note that both copies of $M_2(F)$ are simple graded ideals of $A$. Now we apply Example~\ref{ExampleGrSemiSimple}.
\end{proof}

\begin{example}\label{ExampleGroupAlgebra}
Let $A=FG$ where $G$ is a finite group. Consider the natural $G$-grading $A=\bigoplus_{g\in G} A^{(g)}$
where $A^{(g)}=Fg$. Then there exist $C_1, C_2 > 0$, $r_1, r_2 \in \mathbb R$
  such that $$C_1 n^{r_1} |G|^n \leqslant c_n^{\mathrm{gr}}(A)
  \leqslant C_2 n^{r_2} |G|^n \text{ for all } n \in\mathbb N.$$
\end{example}
\begin{proof}
We notice that $FG$ is a graded simple algebra and apply Example~\ref{ExampleGrSemiSimple}.
\end{proof}

\begin{remark}
In~\cite[Corollary~3.4]{AljadeffKanelBelov}, E.~Aljadeff and A.\,Ya.~Kanel-Belov showed that
$$ |G|^n \leqslant c_n^{\mathrm{gr}}(FG) \leqslant |G'|\cdot |G|^n \text{ for all } n\in\mathbb N$$
where $G'$ is the commutator subgroup of $G$. In addition,
they proved that $\lim\limits_{n\to\infty}\left(\frac{c_n^{\mathrm{gr}}(FG)}{|G'|\cdot |G|^n}\right) = 1$. 
\end{remark}

\subsection{Examples of algebras with an action of a non-Abelian group}

  \begin{example}\label{ExampleFields}
  Let $A = F e_1 \oplus \ldots \oplus F{e_m}$ (direct sum of ideals)
  where $e_i^2=e_i$, $m \in \mathbb N$. Suppose $G \subseteq S_m$ acts on $A$
  by the formula $\sigma e_i := e_{\sigma(i)}$, $\sigma \in G$.
  Let $\left\lbrace 1,2, \ldots, m \right\rbrace = \coprod_{i=1}^q O_i$
  where $O_i$ are orbits of the $G$-action on $\left\lbrace 1,2, \ldots, m \right\rbrace$.
  Let $d := \max_{1 \leqslant i
  \leqslant q} |O_i|$. Then there exist $C_1, C_2 > 0$, $r_1, r_2 \in \mathbb R$
  such that $C_1 n^{r_1} d^n \leqslant c_n^G(A)
  \leqslant C_2 n^{r_2} d^n$ for all $n \in\mathbb N$.
 \end{example}
 \begin{proof}
 Note that $A=B_1 \oplus \ldots \oplus B_q$
 where $B_i := \langle e_j \mid j \in O_i \rangle_F$ are $G$-invariant ideals.
 We claim that $B_i$ is $G$-simple for any $1 \leqslant i \leqslant q$.
 Indeed, if $I$ is a nontrivial $G$-invariant ideal of $B_i$,
 there exists $a = \sum_{j \in O_i} \alpha_j e_j \in I$ where $\alpha_j \in F$
 and $\alpha_k \ne 0$ for some $k\in O_i$.
 Thus $e_k = \frac{1}{\alpha_k} e_k a \in I$. Moreover,
 for any $j\in O_i$ there exists $\sigma \in G$ such that  $e_j = \sigma e_{k}$.
  Hence $I=B_i$ and $B_i$ is $G$-simple.

 By Example~\ref{ExampleSemiSimpleG}, $\PIexp^G(A)=\max_{1 \leqslant i
  \leqslant q} \dim B_i = \max_{1 \leqslant i
  \leqslant q} |O_i|$.
 \end{proof}

In Example~\ref{ExampleMatrix} the group may act by anti-automorphisms too.

  \begin{example}\label{ExampleMatrix}
  Let $A = A_1 \oplus \ldots \oplus A_m$ (direct sum of ideals),
   $A_i \cong M_k(F)$, $1\leqslant i \leqslant m$, and $k,m \in \mathbb N$.
   The group $\Aut^*(M_k(F)) \times S_m$
   acts on $A$ in the following way: if $(\varphi,\sigma)\in \Aut^*(M_k(F)) \times S_m$
   and $(a_1, \ldots, a_m) \in A$, then
   $$(\varphi, \sigma) \cdot (a_1, \ldots, a_m)
   := (a^{\varphi}_{\sigma^{-1}(1)}, \ldots, a^{\varphi}_{\sigma^{-1}(m)}).$$
   Suppose $G \subseteq \Aut^*(M_k(F)) \times S_m$ is a subgroup.
   Denote by $\pi : \Aut^*(M_k(F)) \times S_m \to S_m$ the natural
   projection on the second component.
  Let $\left\lbrace 1,2, \ldots, m \right\rbrace = \coprod_{i=1}^q O_i$
  where $O_i$ are orbits of the $\pi(G)$-action on $\left\lbrace 1,2, \ldots, m \right\rbrace$.
  Let $d := k^2 \max_{1 \leqslant i
  \leqslant q} |O_i|$. Then there exist $C_1, C_2 > 0$, $r_1, r_2 \in \mathbb R$
  such that $C_1 n^{r_1} d^n \leqslant c_n^G(A)
  \leqslant C_2 n^{r_2} d^n$ for all $n \in\mathbb N$.
 \end{example}
 \begin{proof}
 Note that $A=B_1 \oplus \ldots \oplus B_q$
 where $B_i := \bigoplus_{j \in O_i} A_j$ are $G$-invariant ideals.
 We claim that $B_i$ is $G$-simple for any $1 \leqslant i \leqslant q$.
 Indeed, if $I$ is a nontrivial $G$-invariant ideal of $B_i$,
 there exists $a = \sum_{j \in O_i} a_j \in I$ where $a_j \in A_j$
 and $a_\ell \ne 0$ for some $\ell \in O_i$. Denote by $e_\ell$
 the identity matrix of $A_\ell$. Then $e_\ell a = a_\ell \in I$
 and $I \cap A_\ell \ne 0$. Since $A_\ell$ is simple,
 $I \cap A_\ell = A_\ell$. Note that for any $j\in O_i$
 there exists $g\in G$ that $A_j = A_\ell^g$.
  Hence $I=B_i$ and $B_i$ is $G$-simple.

 By Example~\ref{ExampleSemiSimpleG}, $\PIexp^G(A)=\max_{1 \leqslant i
  \leqslant q} \dim B_i = k^2 \max_{1 \leqslant i
  \leqslant q} |O_i|$.
 \end{proof}

In Example~\ref{ExampleUT} the algebra is not semisimple.

  \begin{example}\label{ExampleUT}
  Let $A = A_1 \oplus \ldots \oplus A_m$ (direct sum of ideals) where
   $A_i \cong \UT_k(F)$, $1\leqslant i \leqslant m$;  $k,m \in \mathbb N$;
   and $\UT_k(F)$ is the associative algebra of $k\times k$
   upper-triangular matrices.
   Suppose $G \subseteq S_m$ acts on $A$ in the following way: if $\sigma\in G$
   and $(a_1, \ldots, a_m) \in A$, then
   $$\sigma \cdot (a_1, \ldots, a_m)
   := (a_{\sigma^{-1}(1)}, \ldots, a_{\sigma^{-1}(m)}).$$
   Let $\left\lbrace 1,2, \ldots, m \right\rbrace = \coprod_{i=1}^s O_i$
  where $O_i$ are orbits of the $G$-action on $\left\lbrace 1,2, \ldots, m \right\rbrace$.
  Let $d := k \cdot\max_{1 \leqslant i
  \leqslant s} |O_i|$. Then there exist $C_1, C_2 > 0$, $r_1, r_2 \in \mathbb R$
  such that $$C_1 n^{r_1} d^n \leqslant c_n^G(A)
  \leqslant C_2 n^{r_2} d^n
   \text{ for all } n \in\mathbb N.$$
 \end{example}
 \begin{proof} Denote by $e_{ij}^{(t)}$, $1\leqslant i \leqslant j \leqslant k$,
 the matrix units of $A_t$, $1 \leqslant t \leqslant m$.
 Then $\sigma e_{ij}^{(t)} = e_{ij}^{(\sigma(t))}$
 Note that
\begin{equation}\label{EqWedderburnUT}
 A=\left( \bigoplus_{i=1}^s \bigoplus_{j=1}^k B_{ij}\right) \oplus J
 \end{equation}
 where $B_{ij} := \langle e_{jj}^{(t)}  \mid t \in O_i \rangle_F$ are $G$-invariant subalgebras
 and $$J:=\langle e_{ij}^{(t)}  \mid 1\leqslant i < j \leqslant k,
 1 \leqslant t \leqslant m \rangle_F$$ is a $G$-invariant nilpotent ideal.
 We claim that $B_{ij}$ is $G$-simple for any $1 \leqslant i \leqslant s$
 and $1 \leqslant j \leqslant k$.
 Indeed, if $I$ is a nontrivial $G$-invariant ideal of $B_{ij}$,
 there exists $a = \sum_{t \in O_i} \alpha_t e_{jj}^{(t)} \in I$ where $\alpha_t \in F$
 and $\alpha_\ell \ne 0$ for some $\ell \in O_i$.
  Then $e^{(\ell)}_{jj} = \frac{1}{\alpha_\ell} e^{(\ell)}_{jj} a  \in I$.
   Note that for any $t\in O_i$
 there exists $\sigma \in G$ that $e_{jj}^{(t)} = \left(e^{(\ell)}_{jj}
 \right)^\sigma$.
  Hence $I=B_i$ and $B_i$ is $G$-simple. Thus~(\ref{EqWedderburnUT})
  is a $G$-invariant Wedderburn~--- Malcev decomposition of $A$.

  Let  $1 \leqslant i \leqslant s$, $t \in O_i$. Note that
  $$e_{11}^{(t)} e_{12}^{(t)} e_{22}^{(t)} e_{23}^{(t)} \ldots e_{kk}^{(t)} = e_{1k}^{(t)} \ne 0.$$
  Thus $$B_{i1}JB_{i2}J\ldots J B_{ik} \ne 0$$
  and, by Theorem~\ref{TheoremMainG} and~(\ref{EqdofA}), we have
\begin{equation}\label{EqExUTLower}
  \PIexp^G(A)\geqslant \dim(B_{i1} \oplus \ldots \oplus B_{ik})
  = k |O_i|.\end{equation}

  Suppose $$B_{i_1 j_1}JB_{i_2 j_2}J\ldots J B_{i_r j_r} \ne 0$$
  for some $1 \leqslant i_\ell \leqslant s$, $1 \leqslant j_\ell \leqslant k$.
  Then we can choose such $e^{(q_\ell)}_{j_\ell j_\ell} \in B_{i_\ell j_\ell}$,
  $q_\ell \in O_{i_\ell}$, $1 \leqslant j_\ell \leqslant k$,
  and $e^{(q'_\ell)}_{i'_\ell j'_\ell} \in J$, $1 \leqslant i'_\ell < j'_\ell
  \leqslant k$, that
  $$e^{(q_1)}_{j_1 j_1} e^{(q'_1)}_{i'_1 j'_1} e^{(q_2)}_{j_2 j_2} e^{(q'_2)}_{i'_2 j'_2}
  \ldots e^{(q'_{r-1})}_{i'_{r-1} j'_{r-1}} e^{(q_r)}_{j_r j_r} \ne 0.$$
  Thus $q_1 = q'_1 = q_2 = q'_2=\ldots =q'_{r-1}= q_r$ and
  $j_\ell = i'_\ell$, $j'_{\ell-1} = j_\ell$.
  Hence $i_1=\ldots = i_r$, $r \leqslant k$, and
  $ \dim(B_{i_1 j_1} \oplus \ldots \oplus B_{i_r j_r})
  \leqslant k |O_{i_1}|$.
  Therefore, $\PIexp^G(A) \leqslant k\cdot \max_{1 \leqslant i
  \leqslant s} |O_i|$. The lower bound was obtained in~(\ref{EqExUTLower}).
 \end{proof}

\subsection{Sweedler's algebra with the action of its dual}\label{SubsectionSweedler}

Let $H=\langle 1, c, b, cb \rangle_F$ be the 4-dimensional Sweedler's Hopf algebra.
Here $c^2=1$, $b^2=0$, $bc=-cb$, $\Delta(c)=c\otimes c$, $\Delta(b)=c\otimes b + b \otimes 1$,
$\varepsilon(c)=1$, $\varepsilon(b)=0$, $S(c)=c$, $S(b)=-cb$. Then $H$ is a right $H$-comodule
with $\rho=\Delta$. Therefore $H$ is a left $H^*$-module with the action defined by $g h = g(h_{(2)})h_{(1)}$, $h\in H$, $g\in H^*$. In this section we prove

\begin{theorem}\label{TheoremSweedlersAlgebra} Let $F$ be a field of characteristic $0$.
There exist $C > 0$ and $r \in \mathbb R$
such that 
$C n^{r} 4^n \leqslant c^{H^*}_n(H) \leqslant 4^{n+1}$ for all $n\in \mathbb N$.
\end{theorem}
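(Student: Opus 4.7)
The upper bound $c^{H^*}_n(H) \leqslant 4^{n+1}$ is immediate from Lemma~\ref{LemmaCodimDim}, since $\dim H = 4$. The lower bound is the substantive statement, and my plan is to adapt the constructions of Sections~\ref{SectionAlt}--\ref{SectionLower} directly to Sweedler's algebra, despite the fact that the Jacobson radical $J = Fb \oplus Fcb$ is not $H^*$-invariant (for instance $h_b \cdot b = c \notin J$), so that Theorem~\ref{TheoremMain} is unavailable.

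The first step is to observe that none of the three proper non-zero two-sided ideals of $H$, namely $J$, $J + F(c-1)$, $J + F(c+1)$, is $H^*$-stable, so $H$ itself is $H^*$-simple. A direct verification, or the classical isomorphism $H \# H^* \cong \End_F(H)$ for finite-dimensional Hopf algebras, shows that $\varphi(H) + \psi(H) + \rho(H^*)$ generates all of $\End_F(H) \cong M_4(F)$. With this density in place, the proof of Lemma~\ref{LemmaAlternateFirst} goes through using Regev's central polynomial $f_4$ for $M_4(F)$, yielding an $H^*$-polynomial $f_1$ alternating in two sets of $\dim H = 4$ variables and having a prescribed non-zero value on $H$.

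The delicate step is the analog of Theorem~\ref{TheoremAlternateFinal}, whose proof doubles alternating sets via non-degeneracy of the bilinear form $\tr(\varphi(u)\varphi(v))$. For Sweedler's algebra this form equals $4\, h_1(uv)$ and is degenerate on the radical, so a substitute is needed. My plan is to use a right integral $\lambda = h_b \in H^*$: the operator $T(u,v) \colon x \mapsto h_b \cdot (uvx)$, which can be written as an $H^*$-polynomial via the iterated coproduct $\Delta^{(2)}(h_b) = h_{b(1)} \otimes h_{b(2)} \otimes h_{b(3)}$, satisfies $\tr T(u,v) = h_b(uvc)$, and this form is non-degenerate (its matrix in the basis $\{1,c,b,cb\}$ is anti-diagonal with entries $-1,-1,1,-1$, hence determinant $-1$). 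Replacing $L_u L_v$ by $T(u,v)$ in the inductive definition of $f_1^{(j)}$ and then antisymmetrizing, the key identity $\sum_{\sigma,\tau \in S_4} \sign(\sigma\tau) \prod_i B(a_{\sigma(i)}, a_{\tau(i)}) = 4!\, \det(B(a_i, a_j))$ only requires non-vanishing of the determinant, not symmetry of $B$; this produces for every $k$ a polynomial $f_k \in Q^{H^*}_{4, 2k, 8k + T + 1}$ that is not an $H^*$-identity of $H$.

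To conclude, I would run the arguments of Lemmas~\ref{LemmaAlt} and~\ref{LemmaCochar}. Since $\dim H = 4$, the column-alternation portion of the proof of Lemma~\ref{LemmaUpper}, which is independent of any radical decomposition, forces $\lambda_5 = 0$ whenever $m(H, H^*, \lambda) \neq 0$, so $\sum_{i=1}^4 \lambda_i = n$. Together with the bound $\sum_{i=1}^3 \lambda_i \leqslant 3 \cdot 2k + (n - 8k) = n - 2k$ coming from the $2k$-fold alternation, this gives $\lambda_4 \geqslant 2k$, so $\lambda$ contains the rectangle $(2k, 2k, 2k, 2k)$. The branching rule and the hook length formula then yield $\dim M(\lambda) \geqslant \dim M(2k, 2k, 2k, 2k) \sim C n^{r} 4^n$ as $k \to \infty$, completing the lower bound. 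The main obstacle is precisely the replacement of the trace form from Lemma~\ref{LemmaForm} by a non-degenerate bilinear form in the non-semisimple setting; the integral $h_b \in H^*$ supplies exactly such a form, and its expression as an iterated coproduct is what allows it to be incorporated into a genuine $H^*$-polynomial.
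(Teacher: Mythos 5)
Your proposal is correct in its essentials, but for the lower bound it takes a genuinely different (and considerably heavier) route than the paper. The paper sidesteps Sections~\ref{SectionAlt}--\ref{SectionLower} almost entirely: it exhibits the explicit degree-$4$ polynomial $f_1 = \sum_{\sigma \in S_4} (\sign \sigma)\, x^{g_1}_{\sigma(1)} x^{g_c}_{\sigma(2)} x^{g_b}_{\sigma(3)} x^{g_{cb}}_{\sigma(4)}$, observes from the multiplication table of the $H^*$-action that only $\sigma = e$ survives under the substitution $(1,c,b,cb)$ so that $f_1(1,c,b,cb)=1$, and then — because the value is the \emph{unit} of $H$ — simply multiplies $\left[\frac{n}{4}\right]$ copies of $f_1$ together and pads with trailing variables to get a non-identity alternating in $\left[\frac{n}{4}\right]$ sets of size $4$; no Regev polynomial, no density theorem, and no bilinear form are needed, and the cocharacter estimate is then run exactly as in Lemma~\ref{LemmaCochar} with $n_0=0$, $p=1$, $d=4$. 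Your plan instead repairs the general machinery: the density step and the analog of Lemma~\ref{LemmaAlternateFirst} do go through (using $H\# H^*\cong\End_F(H)$ and $\ker\varphi=0$), and your key new ingredient — replacing the degenerate form $\tr(\varphi(u)\varphi(v))$ by $\tr\bigl(x\mapsto g_b\cdot(uvx)\bigr)=g_b(uvc)$, whose matrix in the basis $1,c,b,cb$ is indeed anti-diagonal with determinant $-1$ — is sound, since the trace-extraction mechanism in Theorem~\ref{TheoremAlternateFinal} works for an arbitrary linear operator inserted in place of $\varphi(u)\varphi(v)$ and the reindexing identity $\sum_{\sigma,\tau}\sign(\sigma\tau)\prod_i B(a_{\sigma(i)},a_{\tau(i)})=4!\det B$ requires no symmetry of $B$. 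What your approach buys is robustness: it would apply to other $H$-simple but non-semisimple algebras where an integral supplies a non-degenerate Frobenius-type form, whereas the paper's argument is a short computation tailored to this one example. One small inaccuracy: $H$ has more than three proper nonzero two-sided ideals (e.g.\ $F(b+cb)$ and $F(b-cb)$ are one-dimensional two-sided ideals), so your enumeration is incomplete; the conclusion that $H$ is $H^*$-simple is nevertheless correct, and the paper verifies it by a direct computation with a general element.
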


We may assume $F$ to be algebraically closed since the codimensions do not change upon an extension of the base field and $H \otimes_F K$ is again the Sweedler's algebra for any extension $K \supset F$.

 We choose the basis $g_1, g_c, g_b, g_{cb}$ of $H^*$ dual to $1,c,b,cb$ of $H$.
Note that $\Xi \colon H \to H^*$, where $\Xi(1)=g_1 + g_c$, $\Xi(c)=g_1 - g_c$, $\Xi(b)=g_{cb}-g_b$,
$\Xi(cb)=g_{cb}+g_b$, is an isomorphism of Hopf algebras, i.e., in fact, $H$ is acting on itself.
However, for us it is convenient to work with $H^*$-action.
 Put $g_x y$, where $x,y \in \lbrace 1, c, b, cb \rbrace$, in a table:
$$\begin{array}{|c|c|c|c|c|}
\hline
     & 1 & c & b & cb \\
\hline
 g_1 & 1 & 0 & b & 0 \\ 
\hline
 g_c & 0 & c & 0 & cb \\
\hline
 g_b & 0 & 0 & c & 0 \\ 
\hline
 g_{cb} & 0 & 0 & 0 & 1 \\
 \hline
\end{array}.$$

Note that $H$ is an $H^{*}$-simple algebra.
Indeed, suppose $I$ is a nonzero $H^*$-invariant ideal of $H$. Let $a = \alpha 1 + \beta\, c + \gamma\, b + \mu\, cb \in I$,
$\alpha,\beta,\gamma,\mu \in F$. Then $g_b a = \gamma c$ and $g_{cb} a = \mu \, 1$.
Thus if at least one of $\gamma$ and $\mu$ is nonzero, we have either $1 \in I$ or $c \in I$.
In this case $I=H$. If $a = \alpha \, 1 + \beta c$, then $g_1 a = \alpha \, 1$
and $g_c a = \beta \,c$. Again, we obtain $I=H$.

Unfortunately, in the proof of Theorem~\ref{TheoremSweedlersAlgebra} we cannot use Theorem~\ref{TheoremMainH}  since the Jacobson radical of $H$, that equals $\langle b, cb\rangle_F$,
is not $H^*$-invariant. Furthermore, despite the fact that $H$ is an $H^{*}$-simple algebra,
we cannot use the arguments from Section~\ref{SectionAlt} since the trace form on $H$ is degenerate.
However, we can prove Theorem~\ref{TheoremSweedlersAlgebra} directly.

First, we obtain the analog of Lemma~\ref{LemmaAlt}.

\begin{lemma}\label{LemmaSweedlersAlt}
 For every $n\in \mathbb N$
there exist disjoint subsets $X_1$, \ldots, $X_{k} \subseteq \lbrace x_1, \ldots, x_n
\rbrace$, 
$|X_1| = \ldots = |X_k|=4$, $k=\left[\frac{n}{4}\right]$, and a polynomial $f \in P^{H^*}_n \backslash
\Id^{H^*}(H)$ alternating in the variables of each set $X_j$.
\end{lemma}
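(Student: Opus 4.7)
The plan is to construct the alternating polynomial directly from an explicit length-$4$ Young-type alternator, bypassing the obstructions flagged in the paragraph just before the lemma (degenerate trace form on $H$, non-$H^*$-invariant radical). The key observation is that the action table given in the subsection shows that each dual basis element $g_x\in H^*$ \emph{strictly} filters the basis of $H$: $g_b\cdot y\ne 0$ only for $y=b$, $g_{cb}\cdot y\ne 0$ only for $y=cb$, $g_1\cdot y\ne 0$ only for $y\in\{1,b\}$, and $g_c\cdot y\ne 0$ only for $y\in\{c,cb\}$. This sharp selectivity will let us kill every non-identity permutation in a single Young alternation over $4=\dim H$ variables.

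First, I would define
$$
f_0(x_1,x_2,x_3,x_4) := \sum_{\sigma\in S_4}(\sign\sigma)\, x^{g_1}_{\sigma(1)}\, x^{g_c}_{\sigma(2)}\, x^{g_b}_{\sigma(3)}\, x^{g_{cb}}_{\sigma(4)} \in P^{H^*}_4,
$$
which is alternating in $x_1,\dots,x_4$ by construction. Substituting $(x_1,x_2,x_3,x_4)=(1,c,b,cb)$, the nonvanishing constraints from the table force $\sigma(3)=3$ and $\sigma(4)=4$ (from the $g_b$- and $g_{cb}$-slots); the remaining $g_1$- and $g_c$-slots then force $\sigma(1)\in\{1,3\}$ and $\sigma(2)\in\{2,4\}$, so combined with the previous constraints we must have $\sigma=e$. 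Only the identity term survives, giving $f_0(1,c,b,cb) = 1\cdot c\cdot c\cdot 1 = 1\ne 0$. Hence $f_0\notin\Id^{H^*}(H)$.

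For arbitrary $n$, I would set $k:=\left[\frac{n}{4}\right]$, take the disjoint sets $X_j:=\{x_{4j-3},x_{4j-2},x_{4j-1},x_{4j}\}$ for $1\le j\le k$, and form
$$
f(x_1,\dots,x_n) := f_0(X_1)\,f_0(X_2)\cdots f_0(X_k)\cdot x_{4k+1}x_{4k+2}\cdots x_n \in P^{H^*}_n,
$$
which is multilinear and alternating in each $X_j$. Substituting $(1,c,b,cb)$ in every $X_j$ and $1$ for each of the remaining $n-4k$ variables yields the product $1\cdot 1\cdots 1 = 1\ne 0$, so $f\notin\Id^{H^*}(H)$, completing the proof. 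The only step with any real content is the four-variable calculation for $f_0$, which is entirely mechanical once the operators $g_1,g_c,g_b,g_{cb}$ are chosen so that each slot of $f_0$ acts as a single-target projector; there is no further obstacle.
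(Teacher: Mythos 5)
Your proof is correct and is essentially identical to the paper's: the same alternating polynomial $\sum_{\sigma\in S_4}(\sign\sigma)\,x^{g_1}_{\sigma(1)}x^{g_c}_{\sigma(2)}x^{g_b}_{\sigma(3)}x^{g_{cb}}_{\sigma(4)}$ evaluated at $(1,c,b,cb)$, then multiplied over the $k$ blocks with the leftover variables appended. Your explicit verification that only $\sigma=e$ survives (and that the value is $c^2=1$) just fills in a computation the paper leaves to the reader.
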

\begin{proof}
Consider $$f_1 = \sum_{\sigma \in S_4} (\sign \sigma) x^{g_1}_{\sigma(1)}
x^{g_c}_{\sigma(2)} x^{g_b}_{\sigma(3)} x^{g_{cb}}_{\sigma(4)}.$$
Let $x_1 = 1$, $x_2 = c$, $x_3 = b$, $x_4 = cb$. Then only the term
that corresponds to $\sigma = e$ does not vanish. Moreover, $f_1(1,c,b,cb)=1$.

Now we take $$f = \left( \prod_{j=1}^{k} f_1(x_{4j-3}, x_{4j-2}, x_{4j-1}, x_{4j})
\right) x_{4k+1} x_{4k+2} \ldots x_n.$$ Then $f \notin \Id^{H^*}(H)$
since $H$ has $1$. Moreover, $f$ is alternating in the variables
of each set $X_j = \lbrace x_{4j-3}, x_{4j-2}, x_{4j-1}, x_{4j}\rbrace$.
\end{proof}

\begin{proof}[Proof of Theorem~\ref{TheoremSweedlersAlgebra}]
The upper bound is a consequence of Lemma~\ref{LemmaCodimDim}.
In order to obtain the lower bound it is sufficient
to repeat the proof of Lemma~\ref{LemmaCochar} and Theorem~\ref{TheoremMain}
(see the end of Section~\ref{SectionLower})
using Lemma~\ref{LemmaSweedlersAlt} instead of Lemma~\ref{LemmaAlt} for $n_0=0$, $p=1$, $d=4$.
\end{proof}

\section*{Acknowledgements}

I am grateful to Yuri Bahturin who suggested that I study polynomial $H$-identities.
In addition, I appreciate Mikhail Zaicev and Mikhail Kotchetov for helpful
discussions.

\end{document}